\title[Closed convex hulls of unitary orbits]{Closed convex hulls of unitary orbits in certain simple real rank zero C$^*$-algebras} 
\newtheorem{thm}{Theorem}[section]
\newtheorem{lem}[thm]{Lemma}
\newtheorem{cor}[thm]{Corollary}
\theoremstyle{definition}
\newtheorem{defn}[thm]{Definition}
\newtheorem{rem}[thm]{Remark}
\newtheorem{exam}[thm]{Example}
\newcommand{\bC}{{\mathbb{C}}}
\newcommand{\bN}{{\mathbb{N}}}
\newcommand{\bR}{{\mathbb{R}}}
\newcommand{\fM}{{\mathfrak{M}}}
\newcommand{\A}{{\mathcal{A}}}
\newcommand{\B}{{\mathcal{B}}}
\newcommand{\C}{{\mathcal{C}}}
\newcommand{\D}{{\mathcal{D}}}
\newcommand{\E}{{\mathcal{E}}}
\newcommand{\F}{{\mathcal{F}}}
\newcommand{\M}{{\mathcal{M}}}
\newcommand{\U}{{\mathcal{U}}}
\newcommand{\qand}{\quad\text{and}\quad}
\newcommand{\qqand}{\qquad\text{and}\qquad}
\newcommand{\Tr}{\mathrm{Tr}}
\newcommand{\tr}{\mathrm{tr}}
\newcommand{\diag}{\mathrm{diag}}
\newcommand{\sa}{\mathrm{sa}}
\newcommand{\conv}{\mathrm{conv}}
\newcommand{\cconv}{\overline{\mathrm{conv}}}
\newcommand{\dist}{\mathrm{dist}}
\renewcommand{\th}{\mathrm{th}}
\renewcommand{\sp}{\mathrm{sp}}
\numberwithin{equation}{section}
\begin{document}

\author{P. W. Ng}
\address{Department of Mathematics, University of Louisiana at Lafayette, Lafayette, Louisiana, USA, 70504-1010}
\email{png@louisiana.edu}

\author{P. Skoufranis}
\address{Department of Mathematics, Texas A\&M University, College Station, Texas, USA, 77843-3368}
\email{pskoufra@math.tamu.edu}

\subjclass[2010]{46L05}
\date{\today}
\keywords{Convex Hull of Unitary Orbits; Real Rank Zero C$^*$-Algebras; Simple; Eigenvalue Functions; Majorization}

\begin{abstract}
In this paper, we characterize the closures of convex hulls of unitary orbits of self-adjoint operators in unital, separable, simple C$^*$-algebras with non-trivial tracial simplex, real rank zero, stable rank one, and strict comparison of projections with respect to tracial states.  In addition, an upper bound for the number of unitary conjugates in a convex combination needed to approximate a self-adjoint are obtained.
\end{abstract}

\maketitle

\section{Introduction}

Approximate unitary equivalence of objects is an essential notion to the theory of operator algebras.  For example, approximately unitary equivalence of homomorphisms produces K-theoretic data and the notion of when two self-adjoint operators are approximately unitarily equivalent inside a  unital C$^*$-algebra has been important concept for decades.  

A problem that has received less attention is to characterize the closure of the convex hull of the unitary orbit of an operator inside a unital C$^*$-algebra.    From the operator-theoretic standpoint, unitary conjugates of a self-adjoint operator $a$ inside a unital C$^*$-algebra are, for all intents and purposes, the same operator as $a$ whereas elements in the convex hull may be viewed as `averages' of different copies of $a$. 
Thus characterizing the closure of the unitary orbit of a self-adjoint operator (an equivalent method for determining approximate unitary equivalence) need not characterize its convex hull as it is not clear that spectral data is well-behaved with respect to this averaging process.

One reason for pondering this problem is that given a self-adjoint operator $a$ in a C$^*$-algebra $\A$ we often take limits of elements of $\A$ of the form
\[
\frac{1}{n}\sum^n_{k=1} u_k^* a u_k
\]
where $u_k \in \A$ are unitary operators.  Consequently, the closure of the convex hull of the unitary orbit of $a$ characterizes which operators may be obtained.  In addition, there are many other problems in operator algebras where closed convex hulls of unitary orbits plays a vital role including the classical theorem of Schur and Horn (see \cites{S1923, H1954}) which characterizes the possible diagonal $n$-tuples of a self-adjoint matrix based on its eigenvalues, the  extension of the Schur-Horn Theorem to other C$^*$-algebras (see \cites{AM2007, BJ2015, BJ2014, AK2006, R2012,  KS2015-1, KS2015-2, MR2014} to name but a few), and to generalized numerical ranges of operators (see \cites{GS1977, P1980, DS2015} to name but a few).

All of the above applications have mainly been pursued in the von Neumann algebra setting.  Using a notion of majorization first defined for $L_1$-functions by Hardy, Littlewood, and P\'{o}lya (see \cites{HLP1929, HLP1952}) and extended to self-adjoint operators in von Neumann algebras with faithful tracial states by Kamei in \cite{K1983} (also see \cites{AM2008, AK2006, F1982, FK1986, H1987, K1984, K1985, P1985}), it is possible to characterize the convex hull of the unitary orbit of self-adjoint operators in tracial von Neumann algebras (see \cites{HN1991}).  Recently, the second author extended these results to certain unital C$^*$-algebras with real rank zero (see \cite{S2015}) whereas the first author provided a characterization for certain C$^*$-algebra with no non-trivial projections (see \cite{N2016}).

The goal of this paper is to characterize closed convex hulls of unitary orbits of self-adjoint operators in unital, separable, simple C$^*$-algebras with non-trivial tracial simplex, real rank zero, strict comparison of projections, and cancellation of projections.  In addition to this introduction, this paper contains five additional sections structured as follows.

In Section 2 many preliminaries on eigenvalue functions will be analyzed for arbitrary unital C$^*$-algebras.  In particular, the notion and properties of eigenvalue functions  may be directly imported  from von Neumann algebra theory.  This leads to the notion of majorization of self-adjoint operators in C$^*$-algebra.  The goal of this paper is to connect the notion of majorization for self-adjoint operators to the closed convex hull of unitary orbits.

In Section 3 a technical result on comparing eigenvalue functions in tracially approximately finite dimensional C$^*$-algebras (TAF C$^*$-algebras) will be developed.  The reason for analyzing TAF C$^*$-algebras is that every C$^*$-algebra we wish to study contains a TAF C$^*$-algebra with identical $K$-theoretical data.  As such, the single operator theory problem under consideration in this paper may then be solved by placing the operators inside a TAF C$^*$-algebra.  TAF C$^*$-algebras are particularly suited for this problem as, up to a portion of the algebra of arbitrarily small trace, TAF C$^*$-algebras look like finite dimensional C$^*$-algebras where a solution to our problem exists.  Consequently, Section 3 shows that if one self-adjoint operator almost majorizes another, then the same holds for the finite dimensional approximates.

In Section 4 many additional approximation results relating to convex hulls of unitary orbits will be developed using matricial results.  In particular, we demonstrate that if a self-adjoint matrix $B$ almost majorizes a self-adjoint matrix $A$, then $A$ is almost in the convex hull of the unitary orbit of $B$.  Subsequently, we also develop a method for handling the portion of the TAF C$^*$-algebra with arbitrarily small trace.

Finally, Section 5 combines the results of the previous sections along with some simple approximations to prove the main result of this paper, Theorem \ref{thm:main}.   Consequently, for the C$^*$-algebras under consideration in this paper, Theorem \ref{thm:main} classifies when one self-adjoint operator is in the closed convex hull of the unitary orbit of another self-adjoint operator using the notion of majorization from Section 2.  Furthermore, analyzing the proof of Theorem \ref{thm:main} yields a method for constructing the convex combination of unitary conjugates in such C$^*$-algebras provided one knows how to construct convex combination of unitary conjugates in matrix algebras (and such a method exists by \cites{A1989,B1946} and Lemma \ref{lem:a-convex-combination-of-2}).

It is interesting to note the existence of an AF C$^*$-algebra $\A$ with strict comparison of projections with respect to a unique faithful tracial state $\tau$ such that there exist projections $p,q \in \A$ where $\tau(p) = \tau(q)$ and yet $p$ and $q$ are not approximately unitarily equivalent (see \cite{B1988}).  However, as $\tau(p) = \tau(q)$ implies that $p$ and $q$ have the same eigenvalue functions with respect to $\tau$, Theorem \ref{thm:main} implies that $p$ is in the closed convex hull of the unitary orbit of $q$.  Consequently the extreme points of the convex hull of the unitary orbit of $p$ is not the unitary orbit of $p$.

To conclude the paper, Section 6 analyzes how many unitaries are required in a convex combination of a self-adjoint operator to obtain a given self-adjoint operator.  It is known that if $p$ is a rank one projection in the $n \times n$ matrices, then $\frac{1}{n} I_n$ is a convex combination of unitary conjugates of $p$.  However, a simple rank argument implies that at least $n$ unitaries are required in a convex combination of unitary conjugates of $p$ to obtain $\frac{1}{n} I_n$.  Consequently, as we desire a uniform bound over all $n$, this question will not be well-defined unless we restrict our attention to approximating another operator instead of precisely obtaining it.  By a matricial result obtained in Section 6 and under the hypotheses of this paper,  Theorem \ref{thm:main} may be extended to show that if $a$ is in the closed convex hull of the unitary orbit of $b$, then $a$ can be approximated within $O(\frac{1}{n})$ by a convex combination of $O(2^{\frac{n^3}{3}})$ unitary conjugates of $b$.

\section{Preliminaries on Eigenvalue Functions}

In this section, the preliminary structures needed to discuss closed convex hulls of unitary orbits will be developed.  Throughout this paper, $\A$ will denoted a unital C$^*$-algebra, $\A_{\sa}$ will denote the set of self-adjoint elements of $\A$, $\A_+$ will denote the set of positive elements of $\A$, $\U(\A)$ will denote the set of unitary elements of $\A$, $T(\A)$ will denote the set of tracial states on $\A$ (which will be assumed to be non-empty), and $\partial_{\mathrm{ext}} T(\A)$ will denote the extreme boundary of $T(\A)$.  Given $a \in \A_{\sa}$, the spectrum of $a$ will be denote $\sp(a)$ and the unitary orbit of $a$ in $\A$ will be denoted
\[
\U(a) = \{u^*au \, \mid \, u \in \U(\A)\}.
\]
Furthermore, $\conv(\U(a))$ will be used to denote the convex hull of $\U(a)$, and $\cconv(\U(a))$ will be used to denote the norm closure of $\conv(\U(a))$.  Finally, given $n \in \bN$, $\M_n$ will denote the C$^*$-algebra of $n \times n$ matrices, $\tr$ will denote the normalized tracial state on $\M_n$, and $\Tr$ will denoted the unnormalized trace on $\M_n$.

Given $a \in \A_{\sa}$ and $\tau \in T(\A)$, notice that the isomorphism $C^*(a, 1) \cong C(\sp(a))$ induces a Borel probability measure $\mu_{\tau, a}$ on $\sp(a)$ by the Riesz Representation Theorem. Using $\mu_{\tau, a}$, we can defined the following concept which origins in the work of Murray and von Neumann.

\begin{defn}[\cites{MN1936, F1982, FK1986}]
Given a unital C$^*$-algebra $\A$, $a \in \A_{\sa}$, and $\tau \in T(\A)$, the \emph{eigenvalue function of} $a$ \emph{associated with} $\tau$, denoted $\lambda^\tau_a$, is defined for $s \in [0,1)$ by
\[
\lambda^\tau_a(s) := \inf\{t \in \bR \, \mid \, \mu_{\tau, a}((t,\infty)) \leq s\}.
\]
\end{defn}

It is elementary to see that $\lambda^\tau_a$ depends only on the values of $\{\tau(a^n) \, \mid \, n \in \bN\}$.  One particular example will be of great use throughout the paper is as follows.

\begin{exam}
\label{exam:finite-spectrum-eigenvalue-functions}
Let $\A$ be a unital C$^*$-algebra and let $\tau \in T(\A)$.  Let $\{\alpha_k\}^n_{k=1} \subseteq \bR$ be such that $\alpha_k > \alpha_{k+1}$ for all $k$ and let $\{p_k\}^n_{k=1} \subseteq \A$ be a collection of pairwise orthogonal projections such that $\sum^n_{k=1} p_k = 1_\A$.  If $a = \sum^n_{k=1} \alpha_k p_k$ then it is elementary to see that $\mu_{\tau, a} = \sum^n_{k=1} \tau(p_k) \delta_{\alpha_k}$ where $\delta_x$ denotes the point-mass measure at $x$.  Consequently we see that if $s_k = \sum^k_{j=1} \tau(p_j)$ for each $k$ then $\lambda^\tau_a(s) = \alpha_k$ for all $s \in [s_{k-1}, s_k)$.
\end{exam}

Many properties of eigenvalue functions may be recovered directly via von Neumann algebra theory.    For a unital C$^*$-algebra $\A$ and $\tau \in T(\A)$, let $\pi_\tau : \A \to \B(L_2(\A, \tau))$ denote the GNS representation of $\A$ with respect to $\tau$.  Recall this implies there exists a unit vector $\xi \in L_2(\A, \tau)$ such that $\xi$ is a cyclic vector for $\pi$ and
\[
\tau(a) = \langle \pi_{\tau}(a) \xi, \xi\rangle
\]  
for all $a \in \A$.   Let $\fM$ be the von Neumann algebra generated by $\pi_\tau(\A)$ (i.e. $\fM = \overline{\pi_\tau(\A)}^{\text{WOT}}$) and define $\tau_0 : \fM \to \bC$ via
\[
\tau_0(x) = \langle x \xi, \xi\rangle
\]
for all $x \in \fM$.  Using $\pi_\tau$ and the weak-operator topology density of $\pi_\tau(\A)$ in $\fM$, it is elementary to show that $\tau_0$ is a tracial state on $\fM$.  Furthermore $\tau_0$ is faithful on $\fM$.  To see this, suppose $x \in \fM_+$ is such that $\tau_0(x) = 0$.  By the Cauchy-Schwarz inequality, this implies $\tau_0(x^\frac{1}{2}\pi_\tau(a)) = 0$ for all $a \in \A$.  Consequently, for all $a \in \A_+$,
\[
0 = \tau_0\left(x^\frac{1}{2}\pi_\tau(a^2)\right) = \tau_0\left(\pi_\tau\left(a\right) x^\frac{1}{2} \pi_\tau\left(a\right) \right) = \langle \pi_\tau\left(a\right) x^\frac{1}{2} \pi_\tau\left(a\right) \xi, \xi \rangle = \left\| x^\frac{1}{4} \pi_\tau\left(a\right) \xi\right\|^2.
\]
Hence $x^\frac{1}{4} \pi_\tau(a) \xi = 0$ for all $a \in \A_+$.  Therefore, as $\pi_\tau(\A)\xi$ is dense in $L_2(\A, \tau)$, we obtain that $x^\frac{1}{4} = 0$ and hence $x = 0$.  Hence $\tau_0$ is faithful on $\fM$.

Since $\lambda^\tau_a = \lambda^{\tau_0}_{\pi_{\tau}(a)}$ as $\tau(a^n) = \tau_0(\pi_\tau(a)^n)$ for all $n \in \bN$, the following is easily imported from von Neumann algebra theory.

\begin{thm}
\label{thm:eigenvalue-function-properties}
Let $\A$ be a unital C$^*$-algebra, let $\tau \in T(\A)$, and let $a, b \in \A_{\sa}$.  Then the following are true:
\begin{enumerate}
\item The map $s \mapsto \lambda^\tau_a(s)$ is non-increasing and right continuous.
\item $\lim_{s \nearrow 1} \lambda^\tau_a(s) \geq \inf\{t \, \mid \, t \in \sp(a)\}$ and $\lim_{s \searrow 0} \lambda^\tau_a(s) \leq \sup\{t \, \mid \, t \in \sp(a)\}$, with equalities if $\tau$ is faithful.\label{EFspectrallimits}
\item If $a \leq b$ then $\lambda^{\tau}_a(s) \leq \lambda^{\tau}_b(s)$
for all $s \in [0,1)$. \label{EFOrder}
\item For all $\alpha \in [0, \infty)$, 
$\lambda^{\tau}_{\alpha a}(s) = \alpha \lambda^{\tau}_a(s)$ for all $s \in [0,1)$. \label{EFDilation}
\item For all $\alpha \in \mathbb{R}$,
$\lambda^{\tau}_{a + \alpha 1}(S) = \lambda^{\tau}_a(s) + \alpha$ for all $s \in [0,1)$. 
\label{EFTranslation} 
\item $|\lambda^{\tau}_a (s) - \lambda^{\tau}_b(s) | \leq 
\left\| a - b \right\|$ for all $s \in [0,1)$. \label{EFNorm}  
\end{enumerate} 
\label{thm:EFunctionProperties}
\end{thm}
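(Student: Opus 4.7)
The plan is to leverage the GNS reduction established immediately before the theorem statement, together with standard facts about generalized singular value functions in tracial von Neumann algebras.  The authors have already constructed a faithful trace \(\tau_0\) on \(\fM = \overline{\pi_\tau(\A)}^{\mathrm{WOT}}\) and noted that \(\lambda^\tau_a = \lambda^{\tau_0}_{\pi_\tau(a)}\), since both sides depend only on the moments \(\tau(a^n) = \tau_0(\pi_\tau(a)^n)\).

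For properties (1), (3), (4), (5), and (6), I would simply cite the corresponding facts for generalized \(s\)-numbers in a semifinite von Neumann algebra with faithful normal trace, proved for instance in Fack--Kosaki \cite{FK1986}.  Since \(\pi_\tau\) is a unital \(*\)-homomorphism, the relations \(a \leq b \Rightarrow \pi_\tau(a) \leq \pi_\tau(b)\), \(\pi_\tau(\alpha a) = \alpha\pi_\tau(a)\), \(\pi_\tau(a+\alpha 1) = \pi_\tau(a) + \alpha 1\), and \(\|\pi_\tau(a) - \pi_\tau(b)\| \leq \|a-b\|\) all hold, and combining these with the identity \(\lambda^\tau_a = \lambda^{\tau_0}_{\pi_\tau(a)}\) immediately transfers the five properties to the C*-algebra setting.

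Property (2) I would argue directly at the C*-algebra level, since the spectrum of \(\pi_\tau(a)\) inside \(\fM\) can be strictly smaller than \(\sp(a)\) when \(\tau\) is not faithful.  The inequalities follow from the containment \(\mathrm{supp}(\mu_{\tau,a}) \subseteq \sp(a)\): for any \(t < \inf \sp(a)\) one has \(\mu_{\tau,a}((t,\infty)) = 1\), so \(\lambda^\tau_a(s) \geq t\) for all \(s \in [0,1)\), and letting \(t \nearrow \inf \sp(a)\) gives the first inequality; the second is analogous.  For the faithful case I would invoke a standard bump-function argument to upgrade to \(\mathrm{supp}(\mu_{\tau,a}) = \sp(a)\): any \(t_0 \in \sp(a) \setminus \mathrm{supp}(\mu_{\tau,a})\) would admit \(f \in C(\sp(a))_+\) with \(f(t_0) > 0\) and \(\int f\, d\mu_{\tau,a} = 0\), whence \(\tau(f(a)) = 0\) with \(f(a) \geq 0\) nonzero, contradicting faithfulness.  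The equalities in (2) then follow.

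I expect no step to be genuinely difficult, matching the authors' own remark that the result is ``easily imported from von Neumann algebra theory.''  The mildly delicate point will be the non-faithful case of (2), where one cannot merely quote the von Neumann analogue, together with some minor bookkeeping to verify that each listed property ultimately depends only on data preserved by \(\pi_\tau\).
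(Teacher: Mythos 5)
Your proposal is correct and follows essentially the same route as the paper for (1) and (3)--(6): both reduce to the known properties of generalized $s$-numbers in the tracial von Neumann algebra $\fM$ with faithful normal trace $\tau_0$, using the identity $\lambda^\tau_a = \lambda^{\tau_0}_{\pi_\tau(a)}$ established just before the theorem; the paper cites \cites{F1982, FK1986, P1985} for the von Neumann facts. The only genuine divergence is in (2). The paper pushes (2) through the same GNS reduction: since $\tau_0$ is faithful on $\fM$ one gets equalities for $\lambda^{\tau_0}_{\pi_\tau(a)}$ against $\sp(\pi_\tau(a))$, and the C$^*$-inequalities then come from the spectral inclusion $\sp(\pi_\tau(a)) \subseteq \sp(a)$ (with equality exactly when $\tau$, hence $\pi_\tau$, is faithful). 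You instead argue (2) directly at the C$^*$-level from $\mathrm{supp}(\mu_{\tau,a}) \subseteq \sp(a)$, upgrading to equality of support and spectrum in the faithful case via the bump-function argument. Both are sound; your variant is marginally more self-contained and isolates precisely where faithfulness enters, while the paper's treatment is shorter since it has already set up $\tau_0$ and can lean on the cited references. One tiny point of bookkeeping worth making explicit in your write-up of (2): your support-containment argument in fact gives the pointwise bounds $\inf\sp(a) \leq \lambda^\tau_a(s) \leq \sup\sp(a)$ for all $s \in [0,1)$, from which the stated one-sided limits follow immediately by monotonicity from part (1).
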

\begin{proof}
If $\pi_\tau : \A \to \B(L_2(\A, \tau))$ is the GNS representation of $\A$ with respect to $\tau$ and $\tau_0$ is the faithful tracial state induced by $\tau$ on the von Neumann algebra generated by $\pi_\tau(\A)$, then the result holds when $\lambda^\tau_a$ and $\lambda^\tau_b$ are replaced with $\lambda^{\tau_0}_{\pi_{\tau}(a)}$ and $\lambda^{\tau_0}_{\pi_{\tau}(b)}$ (with equalities in (\ref{EFspectrallimits})) by \cites{F1982, FK1986, P1985}.  Consequently, the result follows.
\end{proof}

Using eigenvalue functions, it is possible to define a notion of majorization with respect to a tracial state.
\begin{defn}
\label{defn:majorization}
Let $\A$ be a unital C$^*$-algebra, let $\tau \in T(\A)$, and let 
$a,b \in \A_{sa}$.  It is said that $a$ is \emph{majorized by} $b$ \emph{with respect to} $\tau$, denoted $a \prec_\tau b$, if 
\begin{enumerate}
\item $\int^t_0 \lambda^\tau_a(s) \, ds \leq \int^t_0 \lambda^\tau_b(s) \, ds$ for all $t \in [0,1]$, and
\item $\int^1_0 \lambda^\tau_a(s) \, ds = \int^1_0 \lambda^\tau_b(s) \, ds$.
\end{enumerate}
\end{defn}

The following provides the connection between majorization of eigenvalue functions and closed convex hulls of unitary orbits of self-adjoint operators in tracial von Neumann algebra factors.
\begin{thm}[see \cites{AK2006, AM2008, A1989, B1946, H1987, HN1991, K1983, K1984, K1985}]
\label{thm:majorization-in-factors}
Let $\fM$ be a von Neumann algebra with a faithful tracial state $\tau$ and let $a,b \in \fM_+$.  Then the following are equivalent:
\begin{enumerate}
\item $a \prec_\tau b$.
\item $\tau((a- \alpha 1_\fM)_+) \leq \tau((b - \alpha 1_\fM)_+)$ for all $r > 0$ and $\tau(a) = \tau(b)$.
\item $\tau(f(a)) \leq \tau(f(b))$ for every continuous convex function $f : \bR \to \bR$. 
\end{enumerate}
Furthermore, if $\fM$ is a factor, then for all self-adjoint $a, b \in \fM_{\sa}$, $a \prec_\tau b$ is equivalent to:
\begin{enumerate}
\setcounter{enumi}{3}
\item $a \in \cconv(\U(b))$. 
\item $a \in \overline{\conv(\U(b))}^{w^*}$. 
\item There exists a unital, trace-preserving, positive map $\Phi : \fM \to \fM$ such that $\Phi(b) = a$.  
\item There exists a unital, trace-preserving, completely positive map $\Phi : \fM \to \fM$ such that $\Phi(b) = a$.  
\end{enumerate}
\end{thm}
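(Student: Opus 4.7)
My plan is to split the theorem into two layers: the equivalence of (1), (2), (3) holds in any von Neumann algebra with a faithful tracial state and is essentially analytic, while the equivalences involving (4)--(7) require the factor hypothesis.  As a preliminary step, I would pass to the GNS representation against $\tau$ (as is already done in the proof of Theorem \ref{thm:eigenvalue-function-properties}) so that, without loss of generality, $\tau$ may be assumed faithful and normal on the ambient von Neumann algebra.

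For the equivalence of (1), (2), (3), the main tool is the Borel functional calculus identity
\[
\tau((a - \alpha 1_\fM)_+) = \int_0^1 (\lambda^\tau_a(s) - \alpha)_+ \, ds
\]
coming from the definition of $\lambda^\tau_a$ as the non-increasing rearrangement of the spectral distribution $\mu_{\tau,a}$.  With this identity, (1) $\Leftrightarrow$ (2) is the classical Hardy--Littlewood--P\'olya equivalence for non-increasing right-continuous functions on $[0,1)$ with equal integrals.  For (3) $\Rightarrow$ (2) I would take $f(t) = (t-\alpha)_+$ (convex) for the inequality and $f(t) = \pm t$ (affine and convex) for the equality $\tau(a) = \tau(b)$.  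For (2) $\Rightarrow$ (3) I would uniformly approximate a continuous convex $f$ on the compact set $\sp(a) \cup \sp(b)$ by a non-negative linear combination of functions of the form $(t - \alpha)_+$ plus an affine function, noting that the affine contribution to $\tau(f(a)) - \tau(f(b))$ vanishes since $\tau(a) = \tau(b)$.

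For the factor part, I would prove the chain (1) $\Rightarrow$ (4) $\Rightarrow$ (5) $\Rightarrow$ (7) $\Rightarrow$ (6) $\Rightarrow$ (3) and then close the loop via the first block of equivalences.  The implication (4) $\Rightarrow$ (5) is trivial; for (5) $\Rightarrow$ (7), if $\sum_i c_i^\lambda (u_i^\lambda)^* b u_i^\lambda \to a$ in the $w^*$-topology, the maps $\Phi_\lambda(x) = \sum_i c_i^\lambda (u_i^\lambda)^* x u_i^\lambda$ are unital, trace-preserving, completely positive, and the set of such maps is compact in the point-$w^*$ topology, so a subnet converges to $\Phi$ with $\Phi(b) = a$.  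The implication (7) $\Rightarrow$ (6) is trivial, and (6) $\Rightarrow$ (3) is the standard fact that a unital, trace-preserving positive map satisfies $\tau(f(\Phi(b))) \leq \tau(f(b))$ for every continuous convex $f$; this may be proved by duality on spectral projections.

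The main obstacle is (1) $\Rightarrow$ (4), which genuinely uses the factor hypothesis.  The strategy is to reduce to the finite-dimensional Schur--Horn/Hardy--Littlewood--P\'olya theorem by approximation.  Given $\varepsilon > 0$, use the spectral theorem together with the factor property (every value in $[0, 1]$ is attained as a trace of a projection and any two projections of equal trace are unitarily conjugate) to approximate $a$ and $b$ in norm by operators of the form $a' = \sum_{i=1}^n \alpha_i p_i$ and $b' = \sum_{i=1}^n \beta_i q_i$ with $\tau(p_i) = \tau(q_i) = \frac{1}{n}$, eigenvalues listed in non-increasing order, and (crucially, using Theorem \ref{thm:EFunctionProperties}) preserving the majorization $a' \prec_\tau b'$ up to a small error controlled by $\varepsilon$.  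Then $(\alpha_i)$ is majorized by $(\beta_i)$ as real $n$-tuples, so by Hardy--Littlewood--P\'olya there is a doubly stochastic matrix $D$ with $(\alpha_i) = D(\beta_i)$; by Birkhoff's theorem $D = \sum_k t_k \Pi_k$ is a convex combination of permutation matrices.  Each permutation $\Pi_k$ is implemented by a unitary $w_k \in \fM$ that permutes the equal-trace projections $q_i$, and then $a' = \sum_k t_k w_k^* b' w_k \in \conv(\U(b'))$.  Letting $\varepsilon \to 0$ yields $a \in \cconv(\U(b))$.  The delicate point here is the careful matching of spectral projections across the two approximations while maintaining control of the eigenvalue functions, and this is where invoking the references \cites{H1987, HN1991, K1983} is most useful.
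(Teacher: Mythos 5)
The paper does not actually supply a proof of this theorem: it is stated as a known result with a long citation list (Ando, Birkhoff, Hiai, Hiai--Nakamura, Kamei, Petz, Arveson--Kadison, Argerami--Massey), so there is no ``paper's own proof'' to compare against. Your job is therefore a reconstruction from those references, and your outline is a correct one. The decomposition --- (1) $\Leftrightarrow$ (2) $\Leftrightarrow$ (3) as a Hardy--Littlewood--P\'olya style equivalence valid in any von Neumann algebra with a faithful normal trace, followed by the chain (1) $\Rightarrow$ (4) $\Rightarrow$ (5) $\Rightarrow$ (7) $\Rightarrow$ (6) $\Rightarrow$ (3) $\Rightarrow$ (1) for the factor case --- matches the structure of those papers, and you localize the factor hypothesis correctly to (1) $\Rightarrow$ (4).

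Two points deserve slightly more care than your sketch gives them. For (6) $\Rightarrow$ (3) (equivalently (6) $\Rightarrow$ (2)), the trace inequality
\[
\tau\bigl((\Phi(b) - \alpha 1)_+\bigr) \le \tau\bigl((b - \alpha 1)_+\bigr)
\]
for a unital, positive, trace-preserving $\Phi$ is most cleanly obtained by passing to the trace-adjoint $\Phi^*$ (again unital, positive, trace-preserving): if $p$ is the spectral projection of $\Phi(b)$ for $(\alpha,\infty)$, then
\[
\tau\bigl((\Phi(b) - \alpha 1)_+\bigr) = \tau\bigl((\Phi(b) - \alpha 1)p\bigr) = \tau\bigl((b - \alpha 1)\Phi^*(p)\bigr) \le \tau\bigl((b - \alpha 1)_+ \Phi^*(p)\bigr) \le \tau\bigl((b - \alpha 1)_+\bigr),
\]
using $0 \le \Phi^*(p) \le 1$. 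This is the concrete form of the ``duality on spectral projections'' you allude to, and it shows $\Phi(b) \prec_\tau b$ directly via the first block. For (1) $\Rightarrow$ (4), your approximation by step operators with projections of trace exactly $\frac{1}{n}$ is legitimate in a II$_1$ factor, but the only other factors with a faithful tracial state are $\M_m$, where $\frac{1}{n}$-trace projections exist only when $n \mid m$; there one should invoke Hardy--Littlewood--P\'olya and Birkhoff at the matrix level directly (with $n = m$) rather than for arbitrary $n$. Finally, note that condition (2) as printed in the paper says ``for all $r > 0$'' but uses $\alpha$ in the formula --- this is a typo for ``for all $\alpha > 0$,'' and your proof correctly treats it that way. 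With these clarifications your sketch is a faithful account of the standard argument.
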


The following describes an alternate way of viewing the quantities in the definition of majorization without the need of eigenvalue functions and will have a technical use later in the paper.

\begin{lem}
\label{lem:IntLambdaChar}
If $\A$ is a unital C$^*$-algebra, $\tau \in T(\A)$, $a \in \A_{\sa}$, and $t \in [0,1]$, then
\[
\int_0^t \lambda_a^{\tau}(s) \, ds = \sup\{ \tau(ac) \, \mid \, c \in \A, 0 \leq c \leq 1_\A,
\tau(c) = t \}.
\]
Hence
\[
\int_0^1 \lambda_a^{\tau}(s) \, ds = \tau(a).
\]
Furthermore, if $a \in \A_+$, then
\[
\int_0^t \lambda_a^{\tau}(s) \, ds = \sup\{ \tau(ac)\, \mid \, c \in \A, 0 \leq c \leq 1_\A,
\tau(c) \leq t \}.
\]
\end{lem}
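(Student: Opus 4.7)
The plan is to reduce to a von Neumann algebra via the GNS construction. Let $\pi_\tau : \A \to \B(L_2(\A,\tau))$ be the GNS representation, $\fM = \overline{\pi_\tau(\A)}^{\mathrm{WOT}}$, and $\tau_0$ the induced faithful tracial state on $\fM$; as already observed in the excerpt, $\lambda_a^\tau = \lambda_{\pi_\tau(a)}^{\tau_0}$. I would first prove the identity for $x := \pi_\tau(a)$ inside $\fM$, where spectral projections of $x$ are available, and then transfer the result back to $\A$.

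Inside $\fM$, set $\alpha := \lambda_x^{\tau_0}(t)$, $p := \mathbf{1}_{(\alpha,\infty)}(x)$, and $q := 1_\fM - p$, and decompose $x - \alpha 1_\fM = u - v$ with $u := (x - \alpha 1_\fM)p \geq 0$ and $v := (\alpha 1_\fM - x)q \geq 0$. For the upper bound, any admissible $d \in \fM$ satisfies $\tau_0(ud) \leq \tau_0(u)$ (since $d \leq 1_\fM$ gives $u^{1/2} d u^{1/2} \leq u$) and $\tau_0(vd) \geq 0$ (since both are positive); combining these with $\alpha\tau_0(d) = \alpha t$ yields $\tau_0(xd) \leq \tau_0(xp) + \alpha(t - \tau_0(p))$, which I would check equals $\int_0^t\lambda_x^{\tau_0}(s)\,ds$ directly from the definition of $\lambda_x^{\tau_0}$. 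For the matching value, I would exhibit $d_\star := p + \beta\,\mathbf{1}_{\{\alpha\}}(x)$ with $\beta \in [0,1]$ chosen so that $\tau_0(d_\star) = t$ (taking $d_\star := p$ in the degenerate case $\tau_0(\mathbf{1}_{\{\alpha\}}(x)) = 0$), and verify that $\tau_0(xd_\star)$ equals the same integral.

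Transferring back to $\A$, the ``$\geq$'' direction is immediate: any admissible $c \in \A$ maps under $\pi_\tau$ to an admissible element of $\fM$, so $\tau(ac) = \tau_0(x\pi_\tau(c)) \leq \int_0^t\lambda_a^\tau(s)\,ds$ by the $\fM$ bound. For ``$\leq$'', I would apply Kaplansky density to obtain $c_n \in \A$ with $0 \leq c_n \leq 1_\A$ and $\pi_\tau(c_n) \to d_\star$ strongly; normality of $\tau_0$ then gives $\tau(c_n) = \tau_0(\pi_\tau(c_n)) \to t$ and $\tau(ac_n) \to \tau_0(xd_\star)$. The main obstacle is that $\tau(c_n) \neq t$ in general. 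I would correct this by replacing $c_n$ with the convex combination
\[
c_n' := \tfrac{1-t}{1-\tau(c_n)}c_n + \tfrac{t-\tau(c_n)}{1-\tau(c_n)}1_\A
\]
when $\tau(c_n) < t < 1$, and symmetrically with a scalar multiple of $c_n$ when $\tau(c_n) > t > 0$; this preserves $0 \leq c_n' \leq 1_\A$, produces $\tau(c_n') = t$ exactly, and perturbs $\tau(ac_n)$ only by $O(|\tau(c_n) - t|)$, hence negligibly in the limit. The boundary cases $t \in \{0,1\}$ are handled by taking $c = 0$ or $c = 1_\A$ directly.

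The remaining assertions are short. The identity $\int_0^1\lambda_a^\tau(s)\,ds = \tau(a)$ follows from the $t = 1$ case, since faithfulness of $\tau_0$ forces every $0 \leq d \leq 1_\fM$ with $\tau_0(d) = 1$ to equal $1_\fM$, collapsing the supremum to the single value $\tau(a)$. For $a \in \A_+$, non-negativity of $\lambda_a^\tau$ makes $t' \mapsto \int_0^{t'}\lambda_a^\tau(s)\,ds$ non-decreasing, so the supremum over $\tau(c) \leq t$ cannot exceed its value at $\tau(c) = t$ and is attained there by the first part.
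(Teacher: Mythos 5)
Your proof is correct and follows the same overall skeleton as the paper's (GNS reduction to a von Neumann algebra $\fM$ with faithful normal trace $\tau_0$, establish the supremum identity there, then pull it back to $\A$ via Kaplansky density), but it is considerably more self-contained in two places. First, where the paper simply cites Petz's theorem \cite{P1985}*{Theorem 3} for the identity $\int_0^t \lambda_x^{\tau_0} = \sup\{\tau_0(xd) : 0\leq d\leq 1_\fM,\ \tau_0(d)=t\}$, you reprove it directly with the spectral decomposition $p=\mathbf{1}_{(\alpha,\infty)}(x)$, $x-\alpha 1 = u-v$, and the extremizer $d_\star = p + \beta\,\mathbf{1}_{\{\alpha\}}(x)$; this is essentially Petz's argument and is sound (one does need $\mu_{\tau_0,x}((\alpha,\infty))\leq t\leq\mu_{\tau_0,x}([\alpha,\infty))$, which you implicitly use to choose $\beta$, and both inequalities follow from $\alpha=\lambda_x^{\tau_0}(t)$). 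Second, and more usefully, you notice something the paper glosses over with the phrase ``by Kaplansky Density'': a strongly convergent approximating net $\pi_\tau(c_n)\to d_\star$ need not satisfy the exact side condition $\tau(c_n)=t$, so the Kaplansky step does not immediately give equality of the two suprema. Your convex-combination / rescaling correction $c_n\mapsto c_n'$ repairs this cleanly, preserves $0\leq c_n'\leq 1_\A$, enforces $\tau(c_n')=t$ exactly, and changes $\tau(ac_n)$ only by $O(|\tau(c_n)-t|)$, which tends to $0$. The edge cases $t\in\{0,1\}$ and the $a\in\A_+$ refinement are handled the same way the paper handles them. In short: same route, but with the Petz citation replaced by its proof and a genuine (if small) gap in the Kaplansky step filled in.
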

\begin{proof}
Let $\pi_\tau : \A \to \B(L_2(\A, \tau))$ be the GNS representation of $\A$ with respect to $\tau$, let $\fM$ be the von Neumann algebra generated by $\pi_\tau(\A)$, and let $\tau_0$ be the faithful tracial state on $\fM$ induced by $\tau$.  Then
\begin{align*}
&\int_0^t \lambda_a^{\tau}(s) \, ds
\\ &= \int_0^t \lambda_{\pi_\tau(a)}^{\tau_0}(s) \, ds\\
&= \sup\{ \tau_0(\pi_\tau(a)x)\, \mid \, x \in \fM, 0 \leq x \leq 1_{\fM_0}, \tau(x) = t \} \quad \text{ by \cite{P1985}*{Theorem 3}} \\
&= \sup\{ \tau_0(\pi_\tau(a)\pi_\tau(c))\, \mid \, c \in \A, 0 \leq c \leq 1_\A, \tau(c) = t \} \quad \text{ by Kaplansky Density} \\
&= \sup\{ \tau(ac) \, \mid \, c \in \A, 0 \leq c \leq 1, \tau(c) = t \}.
\end{align*}
Furthermore, if $a \in \A_+$, then $\lambda^\tau_a(s) \geq 0$ for all $s \in [0,1)$ by part (\ref{EFOrder}) of Theorem \ref{thm:EFunctionProperties}.  Hence
\begin{align*}
\int_0^t \lambda_a^{\tau}(s) \, ds &= \sup\left\{ \left. \int_0^r \lambda_a^{\tau}(s) \, ds \, \right| \, 0 \leq r \leq t  \right\} \\
&= \sup\{ \tau(ac)\, \mid \, c \in \A, 0 \leq c \leq 1, \tau(c) \leq t \}. \qedhere
\end{align*}
\end{proof}

\section{An Asymptotic Argument in TAF C$^*$-Algebras}

In this section, specific inequalities relating eigenvalue functions will be discussed.  In particular, the goal of this section is to prove Lemma \ref{lem:AsymptoticArgument}, which enables one to transfer inequalities involving integrals of eigenvalue functions to matricial subalgebras in certain C$^*$-algebras.  The C$^*$-algebras under consideration in this section are the TAF C$^*$-algebras due to Lin:
\begin{defn}[see \cite{H2000}*{Definition 2.1, Proposition 3.8, and Theorem 3.4}, 
\cite{LinBook}*{Section 3.6}, \cite{LinTR0Classification}, and the references therein]
Let $\A$ be a unital, separable, simple, non-elementary C$^*$-algebra.  It is said that $\A$ is \emph{tracially approximately finite dimensional (TAF)}
if for every $a \in \A_+ \setminus \{ 0 \}$, 
for every finite subset $\F \subseteq \A$,  
and for every $\epsilon > 0$, there exists a finite dimensional
C$^*$-subalgebra $\D$ such that
\begin{enumerate}
\item $1_{\A} - 1_{\D}$ is Murray-von Neumann equivalent to a projection
in $\text{Her}(a) := \overline{a \A a}$,
\item $\left\| 1_{\D} x - x 1_{\D} \right\| <\epsilon$ for all $x \in \F$, and
\item for every $x \in \F$, there exists $y_x \in \D$ such that 
\[
\left\| x - (1_{\A} - 1_{\D})x(1_{\A} - 1_{\D}) - y_x \right\| < \epsilon.
\]
\end{enumerate} 
\label{defn:TAFAlgebra}
\end{defn}
\begin{rem}
TAF C$^*$-algebras are also called C$^*$-algebras with \emph{tracial rank zero} 
(see \cite{LinBook}*{Section 3.6}, \cite{LinTRank}*{Theorem 7.1}, \cite{LinTR0Classification},
 and the references therein).   Furthermore, by \cite{H2000}*{Theorem 3.4 and Theorem 3.6} (see also 
\cite{B1998}*{Theorem 6.8.5 and Corollary 6.9.2}), and   
by \cite{LinBook}*{Theorem 3.6.11 and Theorem 3.7.2},    
all unital, simple TAF C$^*$-algebras have real rank zero, stable rank one, 
and strict comparison of projections with respect to tracial states.
\end{rem}

\begin{rem}
\label{rem:TAFLargeDenominators} 
By \cite{H2000}*{Lemma 6.10}, the finite dimensional C$^*$-algebra $\D$ in Definition \ref{defn:TAFAlgebra} may always be chosen so that its simple summands have arbitrarily large rank; that is, for every $N \geq 1$, we can
choose $\D$ so that if $\D \cong \M_{n_1} \oplus \M_{n_2} \oplus \cdots
\oplus \M_{n_k}$ then $n_j \geq N$ for all $j$.  For the convenience of the reader, we quickly sketch this result.

Since unital, simple, TAF C$^*$-algebras have real rank zero, it suffices to prove that if $\A$ is a unital, simple, non-elementary C$^*$-algebra with real rank zero and $\D \subseteq \A$ is a finite-dimensional C$^*$-subalgebra, then for every $N \geq 1$ there exists 
a finite dimensional C$^*$-subalgebra $\D \subseteq \D_1 \subseteq \A$   
such that every simple summand of $\D_1$ has rank at least $N$. 

Since compressions of $\A$ preserve the hypotheses on $\A$, we may assume that $\D$ is a full matrix algebra.
Let $\{ e_{i,j} \}^n_{i,j=1}$ be a system of matrix units for $\D$.  By \cite{PereraRordam}*{Proposition 5.3} (see also \cite{ZhangRiesz})
there exists a finite dimensional C$^*$-algebra $\D_0$ with each simple summand
having rank at least $N$ and a unital *-embedding $\phi : \D_0 \to e_{1,1} \A e_{1,1}$.  If $\D_1$ is the C$^*$-algebra generated
by $\D$ and $\phi(\D_0)$, then $\D_1$ has the required desired property.
\end{rem}

In order to describe and prove Lemma \ref{lem:AsymptoticArgument}, it is necessary to fix some notation and perform some constructions.  Let $\A$ be a unital, separable, simple, non-elementary TAF C$^*$-algebra and let $\{ \F_n \}_{n=1}^{\infty}$ be an increasing sequence of finite 
subsets of the closed unit ball of $\A$ that are closed under adjoints such that 
$\bigcup_{n=1}^{\infty} \F_n$ is dense in the closed unit ball of $\A$.

By the definition and properties of TAF C$^*$-algebra stated previously, and 
by \cite{H2000}*{Proposition 2.4}, for all $n \geq 1$ there exists a finite dimensional C$^*$-algebra $\D_n \subseteq \A$ such that
\begin{enumerate}[(a)]
\item  $\tau(1_{\A} - 1_{\D_n}) < 1/n$ for all $\tau \in T(\A)$,
\item  $\left\| 1_{\D_n} x - x 1_{\D_n} \right\| < 1/n$ for all $x \in \F_n$, and \label{property-almost-commute}
\item for every $x \in \F_n$, there exists a $x_n \in \D_n$ such that 
\[
\left\| x -  (1_{\A} - 1_{\D_n})x(1_{\A} - 1_{\D_n}) - x_n \right\| < 1/n.
\]
\end{enumerate}
Since $\bigcup_{k=1}^{\infty} \F_k$ is dense in the closed unit ball of
$\A$, for all $x \in \A$
there exists a sequence $(x_n)_{n\geq 1}$ such that 
$x_n \in \D_n$ for all $n \geq 1$ and 
\[
\lim_{n \to \infty} \left\| x - (1_{\A} - 1_{\D_n})x (1_{\A} - 1_{\D_n}) - x_n \right\| = 0.
\]
Notice if $(\hat{x}_n)_{n \geq 1}$ is another sequence such that $\hat{x}_n \in \D_n$ for all $n\geq 1$ and
\[
\lim_{n \to \infty} \left\| x - (1_{\A} - 1_{\D_n})x (1_{\A} - 1_{\D_n}) - \hat{x}_n \right\|= 0,
\]
then $\lim_{n \to \infty} \left\| x_n - \hat{x}_n \right\|= 0$.   For the purposes of this section, given an element $x \in \A$, $(x_n)_{n\geq 1}$ will denote such a sequence.

If $y \in \A$ and $(y_n)_{n \geq 1}$ is such that $y_n \in \D_n$ for all $n\geq 1$ and
\[
\lim_{n \to \infty} \left\| y - (1_{\A} - 1_{\D_n})y (1_{\A} - 1_{\D_n}) - y_n \right\|= 0,
\]
then $(x_n + y_n)_{n\geq 1}$ and $(y_n^*)_{n\geq 1}$ are such sequences for $x+y$ and $y^*$ respectively.  To see that $(x_ny_n)_{n\geq 1}$ is such a sequence for $xy$, notice if $x,y \in \F_k$ for some $k$, then
\begin{align*}
&\lim_{n \to \infty} \left\| xy - (1_{\A} - 1_{\D_n})xy (1_{\A} - 1_{\D_n}) - x_ny_n \right\| \\
&= \lim_{n \to \infty} \left\| xy - xy (1_{\A} - 1_{\D_n}) - x_ny_n \right\| \\
&= \lim_{n \to \infty} \left\| x(y - y (1_{\A} - 1_{\D_n}) - y_n) + (x - x_n) 1_{\D_n} y_n \right\| \\
&= \lim_{n \to \infty} \left\| x(y - y (1_{\A} - 1_{\D_n}) - y_n) + (x 1_{\D_n}  - x_n) y_n \right\| \\
&= \lim_{n \to \infty} \left\| x(y - y (1_{\A} - 1_{\D_n}) - y_n) + (x - x (1_\A - 1_{\D_n})  - x_n) y_n \right\| \\
&= \lim_{n \to \infty} \| x(y - (1_{\A} - 1_{\D_n}) y (1_{\A} - 1_{\D_n}) - y_n) \\ & \qquad \qquad \qquad + (x - (1_{\A} - 1_{\D_n}) x (1_\A - 1_{\D_n})  - x_n) y_n \|.
\end{align*}
The result for general $x$ and $y$ then follows from the density of $\bigcup_{k=1}^{\infty} \F_k$ in the closed unit ball of $\A$.

Let $x \in \A_{\sa}$.   By the above, we may assume that each $x_n$ is self-adjoint.  Furthermore we see that if $f : \bR \to \bR$ is continuous, then 
\[
\lim_{n \to \infty} \| f(x_n) - f(x)_n \| = 0.
\]
Consequently, we may further assume that $\left\|x_n\right\| \leq \left\|x\right\|$ and $\sp(x_n) \subseteq \sp(x)$ for all $n\geq 1$.

Since each $\D_n$ is a finite dimensional C$^*$-algebra, we may write
\[
\D_n = \D_{n,1} \oplus \D_{n,2} \oplus \cdots \oplus \D_{n, m_n}
\]
where each $\D_{n, j}$ is a full matrix algebra.  For all $x \in \A$,  $n \geq 1$, and $1 \leq j \leq m_n$, let $x_{n,j} \in \D_{n,j}$ denote the compression of $x_n$ onto $\D_{n,j}$.

The following shows that certain inequalities involving eigenvalue functions pass to the finite dimensional approximates.

\begin{lem}
\label{lem:AsymptoticArgument}
Using the above construction and notation, we have the following:
\begin{enumerate}

\item Suppose that $a \in \A_\sa$ and $\alpha \in \mathbb{R}$ are such that 
\[
\tau(a) \geq \alpha
\]
for every $\tau \in T(\A)$.  For every $\epsilon > 0$ there exists an $N \geq 1$ such that
\[
\tau(a_n) + \epsilon \geq \alpha
\] 
for all $n \geq N$ and $\tau \in \partial_{\mathrm{ext}}T(\D_n)$.

\item Let $a,b \in \A_\sa$ have finite spectrum and let $\epsilon > 0$.  Suppose for all $\tau \in T(\A)$ and $t \in [0,1]$ that
\[
\int_0^t \lambda_{a}^{\tau}(s) \, ds \leq \epsilon t + \int_0^t 
\lambda_{b}^{\tau} (s) \, ds
\qand
\epsilon + \int_0^1 \lambda_{a}^{\tau}(s) \, ds > 
\int_0^1 \lambda_{b}^{\tau} (s) \, ds.
\]
Then for every $\epsilon' > \epsilon$ there exists an $N \geq 1$ such that 
for all $n \geq N$, for all $\tau \in \partial_{\mathrm{ext}}T(\D_n)$, and
for all $t \in [0,1]$, we have
\[
\int_0^t \lambda_{a_n}^{\tau}(s) \, ds \leq \epsilon' t + \int_0^t 
\lambda_{b_n}^{\tau} (s) \, ds
\qand
\epsilon' + \int_0^1 \lambda_{a_n}^{\tau}(s) \, ds > 
\int_0^1 \lambda_{b_n}^{\tau} (s) \, ds.
\] 
\end{enumerate}
\end{lem}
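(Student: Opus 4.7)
My plan is to prove (1) by a weak-$*$-compactness argument that lifts extreme tracial states on each $\D_n$ to states on $\A$, and then to reduce both halves of (2) to (1): the strict-inequality half by a direct compactness argument, and the integral-inequality half by a Fenchel duality combined with the faithfulness of tracial states on $\A$ (which holds because $\A$ is simple, a property built into the TAF hypothesis). I expect the main obstacle to be in (2): the strict inequality witnessing a supposed failure of the conclusion survives the weak-$*$ limit only as a non-strict inequality, so extra rigidity is needed to close the gap, and this is what simplicity provides.

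For (1), I would argue by contradiction. Suppose along a subsequence $n_k \to \infty$ there are $\tau_k \in \partial_{\mathrm{ext}} T(\D_{n_k})$ with $\tau_k(a_{n_k}) < \alpha - \epsilon$. Extending each $\tau_k$ to a state $\hat\tau_k$ on $\A$ via $\hat\tau_k(x) := \tau_k(1_{\D_{n_k}} x 1_{\D_{n_k}})$ and passing to a weak-$*$ limit $\tau^*$, the almost-commutation $\|1_{\D_{n_k}} x - x 1_{\D_{n_k}}\| < 1/n_k$ for $x \in \F_{n_k}$, together with the preamble identities $x_n \approx 1_{\D_n} x 1_{\D_n}$ and $(xy)_n \approx x_n y_n$, imply $\hat\tau_k(xy) - \hat\tau_k(yx) \to 0$, and density of $\bigcup_k \F_k$ in the unit ball of $\A$ forces $\tau^*$ to be tracial. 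Applying the same identities to $x = a$ gives $\hat\tau_k(a) = \tau_k(a_{n_k}) + o(1)$, so $\tau^*(a) \leq \alpha - \epsilon$, contradicting the hypothesis. The strict inequality in (2) then follows similarly: compactness of $T(\A)$ upgrades $\tau(b - a) < \epsilon$ on $T(\A)$ to $\tau(b - a) \leq \epsilon - \delta$ for some $\delta > 0$, and applying (1) to $(\epsilon - \delta)1_\A - (b - a)$ together with $(b - a)_n \approx b_n - a_n$ and $(1_\A)_n \approx 1_{\D_n}$ yields $\tau_0(b_n - a_n) \leq \epsilon - \delta/2 < \epsilon'$ for large $n$ and all $\tau_0 \in \partial_{\mathrm{ext}} T(\D_n)$.

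For the integral inequality in (2), I would first invoke the Fenchel identity $\int_0^t \lambda_a^\tau(s)\,ds = \inf_{\alpha \in \bR}(\alpha t + \tau((a - \alpha)_+))$ for $t \in [0,1]$ (immediate from Lemma~\ref{lem:IntLambdaChar} and concavity in $t$) to reformulate both the hypothesis and the desired conclusion. The hypothesis becomes $\tau((a - \alpha - \epsilon)_+) \leq \tau((b - \alpha)_+)$ for all $\tau \in T(\A)$ and all $\alpha \in \bR$, and the target becomes the same statement with $\epsilon'$ in place of $\epsilon$, $\tau \in \partial_{\mathrm{ext}} T(\D_n)$, and $(a,b)$ replaced by $(a_n, b_n)$. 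Suppose for contradiction there are $\epsilon' > \epsilon$, a subsequence $n_k$, $\tau_k \in \partial_{\mathrm{ext}} T(\D_{n_k})$, and $\alpha_k \in \bR$ with $\tau_k((a_{n_k} - \alpha_k - \epsilon')_+) > \tau_k((b_{n_k} - \alpha_k)_+)$. For $|\alpha_k|$ outside a bounded interval (depending only on $\|a\|, \|b\|, \epsilon'$) the violation reduces to the already-handled strict $t = 1$ inequality, so $\alpha_k$ is bounded and I may extract $\alpha_k \to \alpha^*$ and $\hat\tau_k \to \tau^* \in T(\A)$ as in part~(1). Using continuous functional calculus and the $1$-Lipschitz dependence on $\alpha$, the supposed violation passes to the weak-$*$ limit as $\tau^*((a - \alpha^* - \epsilon')_+) \geq \tau^*((b - \alpha^*)_+)$. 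Combining with the hypothesis $\tau^*((a - \alpha^* - \epsilon)_+) \leq \tau^*((b - \alpha^*)_+)$ and the monotonicity $\tau^*((a - \alpha^* - \epsilon)_+) \geq \tau^*((a - \alpha^* - \epsilon')_+)$, this forces $\tau^*(g(a)) = 0$ where $g(x) := (x - \alpha^* - \epsilon)_+ - (x - \alpha^* - \epsilon')_+ \geq 0$. Now simplicity of $\A$ makes $\tau^*$ faithful, so $g(a) = 0$, which forces $g$ to vanish on $\sp(a)$ and hence $\max \sp(a) \leq \alpha^* + \epsilon$; on the other hand, the strict violation requires some spectral value of $a_{n_k}$, which lies in $\sp(a)$ by the preamble construction, to exceed $\alpha_k + \epsilon'$, giving $\alpha_k < \max \sp(a) - \epsilon' \leq \alpha^* - (\epsilon' - \epsilon)$, in contradiction with $\alpha_k \to \alpha^*$.
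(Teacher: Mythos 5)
Your proposal is essentially correct, and for the integral inequality in part~(2) it takes a genuinely different route from the paper. For part~(1) and the $t=1$ inequality in part~(2), your argument is the same in spirit as the paper's: the paper passes to the asymptotic algebra $\prod \E_l / \bigoplus \E_l$ and uses $\mu\circ\Phi$ as the limit trace, while you lift $\tau_k$ to states on $\A$ and take a weak-$*$ limit; since $\A$ is separable these are equivalent devices (one small caveat: $\tau_k(1_{\D_{n_k}}x 1_{\D_{n_k}})$ is not literally defined, since $1_{\D_{n_k}}x 1_{\D_{n_k}}\notin\D_{n_k}$, so you should either first extend $\tau_k$ to a state on $1_{\D_{n_k}}\A 1_{\D_{n_k}}$ by Hahn--Banach and then compress, or work directly with $\tau_k(x_{n_k})$ up to $o(1)$ as the paper does). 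For the integral inequality in (2) the two approaches genuinely diverge. The paper works with the raw integral form: it uses the finite-spectrum structure of $a$ and $b$ to produce a threshold $\hat t>0$ below which the inequality is automatic (showing $\alpha_1 < \epsilon + \beta_1$), extracts $t_{k_\alpha}\to r\geq\hat t$, and then proves a nontrivial claim that $\int_0^t \lambda^{\mu\circ\Phi}_d \, ds = \lim_\alpha \int_0^t \lambda^{\tau_{k_\alpha}}_{d_{n_{k_\alpha}}}\, ds$ via two applications of Lemma~\ref{lem:IntLambdaChar}. You instead pass to the Ky~Fan (Fenchel) dual form $\tau\bigl((a-\alpha-\epsilon)_+\bigr)\leq\tau\bigl((b-\alpha)_+\bigr)$ and argue on a single scalar Lagrange multiplier $\alpha$ rather than on a variable cut-off $t$, replacing the paper's $\hat t$/threshold device with the faithfulness of the limit trace $\tau^*$ (a consequence of simplicity, exactly the ingredient the paper uses at $\hat t$). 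This buys you a considerably simpler limit argument: the quantity $\tau((a-\alpha)_+)$ passes to the weak-$*$ limit by a one-line functional-calculus estimate, in contrast to the paper's two-sided $\liminf$/$\limsup$ claim. The price is the Ky~Fan identity $\int_0^t\lambda_a^\tau\,ds = \inf_\alpha(\alpha t + \tau((a-\alpha1)_+))$, which is standard but not literally ``immediate'' from Lemma~\ref{lem:IntLambdaChar}; it deserves a short proof or a citation (e.g.\ to Petz~\cite{P1985}) if you write this up. With that caveat and the $\hat\tau_k$ patch above, your proof is complete and sound.
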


\begin{proof}
To see that (1) is true, suppose to the contrary that there exists an $\epsilon > 0$ and sequences
$( n_k )_{k\geq 1}$ and $( j_k )_{k\geq 1}$ of natural numbers with $1 \leq j_k \leq m_{n_k}$ for all $k\geq 1$ such that
\begin{equation}
\tr_{\D_{n_k, j_k}}(a_{n_k, j_k}) + \epsilon < \alpha
\label{Jan220152AM}
\end{equation}
for all $k \geq 1$.  To simplify notation, for all $k \geq 1$ let $\E_k = \D_{n_k, j_k}$ and let $\tau_k$ be the unique tracial state on $\D_{n_k, j_k}$.

Let $\prod_{l=1}^{\infty} \E_l$ denote the $l_{\infty}$-direct sum of
$\{\E_l\}^\infty_{l=1}$ and let $\bigoplus_{l=1}^{\infty} \E_l$ denote the $c_0$-direct
sum.  For all $k \geq 1$, let $\tilde{\tau}_k$ denote the tracial state on
$\prod_{l=1}^{\infty} \E_l$ defined by
\[
\tilde{\tau}_k((y_l)_{l\geq 1}) = \tau_k(y_k).
\]
Since $T(\prod_{l=1}^{\infty} \E_l)$ is weak$^*$ compact,
there exists a subnet $(\tilde{\tau}_{k_{\alpha}} )_{\alpha \in I}$ of
$( \tilde{\tau}_k )_{k\geq 1}$ and a
$\mu \in T(\prod_{l=1}^{\infty} \E_l)$ such that
\[
\mu = \text{w}^*\text{-}\lim_{\alpha} \tilde{\tau}_{k_{\alpha}}.
\]
By definition, it is clear that
\[
\bigoplus_{l=1}^{\infty} \E_l \subseteq \ker(\mu)
\]
and thus $\mu$ induces a tracial state on $\prod_{l=1}^{\infty} \E_l/
\bigoplus_{l=1}^{\infty} \E_l$ which will also be denoted $\mu$.

By previous discussions, there exists a well-defined unital $^*$-homomorphism
\[
\Phi : \A \rightarrow \prod_{l=1}^{\infty} \E_l /   \bigoplus_{l=1}^{\infty} \E_l
\]
defined for all $x \in \A$ by
\[
\Phi(x) = [ (x_{n_l, j_l} )_{l=1}^{\infty}].
\]
Therefore $\mu \circ \Phi$ is a tracial state on $\A$.  However by the definitions of $\Phi$ and $\mu$, and by (\ref{Jan220152AM}),  
\[
 \mu(\Phi(a)) + \epsilon = \lim_{\alpha} \tau_{k_{\alpha}}(a_{n_{k_{\alpha}}, j_{k_{\alpha}}}) + 
\epsilon \leq \alpha  
\]
which contradicts our hypotheses on $a$ and $\alpha$.  Hence (1) must be true.

The proof of (2) will invoke a more complicated version of the asymptotic algebra argument used above so some of the notation will be similar.

To begin, suppose that $a, b$, and $\epsilon$ satisfy the hypotheses of (2).   If necessary, choose $\delta_0 > 0$ so that $a + \delta_0 1_\A$ and $b + \delta_0 1$ are positive.  Since $\lambda^\tau_{a + \delta_0 1_\A}(s) = \delta_0 + \lambda^\tau_{a}(s)$ and $\lambda^\tau_{b + \delta_0 1_\A}(s) = \delta_0 + \lambda^\tau_{b}(s)$ by part (\ref{EFTranslation}) of Theorem \ref{thm:EFunctionProperties}, we may assume without loss of generality that $a,b \in \A_+$.

Since $a, b$ have finite spectrum, let
$\{p_k\}^L_{k=1}, \{q_k\}^M_{k=1} \subseteq \A$ be collections of pairwise orthogonal, non-zero projections
and let $\{\alpha_k\}^L_{k=1}, \{\beta_k\}^M_{k=1} \subseteq \mathbb{R}$ be such that  
\begin{enumerate}[(a)]
\item $\sum^L_{k=1} p_k = \sum^M_{k=1} q_k = 1_\A$,
\item $\alpha_1 > \alpha_2 > \cdots > \alpha_L > 0$, 
\item $\beta_1 > \beta_2 > \cdots > \beta_M > 0$,  
\item $a = \sum_{j=1}^L \alpha_j p_j$, and
\item $b = \sum_{k=1}^M \beta_k q_k$.
\end{enumerate}
Since $\A$ is unital and simple, and since $p_1 \neq 0$ and $q_1 \neq 0$,
\[
\inf_{\tau \in T(\A)} \tau(p_1) > 0 \qqand 
\inf_{\tau \in T(\A)} \tau(q_1) > 0.
\]
Let $\hat{t} > 0$ be such that 
\[
\hat{t} < \min\left\{ \inf_{\tau \in T(\A)} \tau(p_1), 
\inf_{\tau \in T(\A)} \tau(q_1) \right\}.
\]
Hence, for all $0 \leq s \leq \hat{t}$ and $\tau \in T(\A)$, we have that
\[
\lambda_a^{\tau}(s) = \alpha_1
\qqand 
\lambda_b^{\tau}(s) = \beta_1.
\]
Since, by the hypotheses on $a$ and $b$, for all $\tau \in T(\A)$ and $t \in [0, \hat{t}]$ we have that
\[
\int_0^t \lambda_a^{\tau}(s) \, ds < \epsilon t + \int_0^t 
\lambda_b^{\tau}(s) \, ds,
\]
it must be the case that
\[
\alpha_1 t < \epsilon t + \beta_1 t
\]
for all $t  \in [0, \hat{t}]$.  Hence
\[
\alpha_1 < \epsilon +  \beta_1.
\]

Let $f, g :[0, \infty) \rightarrow [0,1]$
be the continuous functions defined by
\[
f(s) = 
\begin{cases}
0 & s \in [0, \frac{\alpha_1 + \alpha_2}{2}]\\
1 & s \in [\alpha_1, \infty)\\
\makebox{  linear on  } & [\frac{\alpha_1 + \alpha_2}{2}, \alpha_1]
\end{cases}
\]
and
\[
g(s) = 
\begin{cases}
0 & s \in [0, \frac{\beta_1 + \beta_2}{2}]\\
1 & s \in [\beta_1, \infty)\\
\makebox{  linear on  } & [\frac{\beta_1 + \beta_2}{2}, \beta_1]
\end{cases}.
\]
Note that $f(a) = p_1$ and $g(b) = q_1$.

Now, suppose to the contrary there exists an $\epsilon' > \epsilon$, sequences $(n_k )_{k\geq 1}$ and $( j_k)_{k\geq 1}$ of natural numbers with $1 \leq j_k \leq m_{n_k}$ for all $k\geq 1$, and a sequence $( t_k)_{k\geq 1}$ with $t_k \in [0,1]$ for all $k\geq 1$ such that
\begin{equation}
\int_0^{t_k} \lambda^{\tr_{\D_{n_k,j_k}}}_{a_{n_k,j_k}}(s) \, ds 
> \epsilon' t_k + \int_0^{t_k} \lambda^{\tr_{\D_{n_k,j_k}}}_{b_{n_k,j_k}}(s) \, ds
\label{equ:Christmas20158:54PM}
\end{equation}
for all $k \geq 1$.

Notice by part (1) of this result that 
\[
\tr_{\D_{n_k,j_k}}(f(a_{n_k,j_k})), \tr_{\D_{n_k,j_k}}(g(b_{n_k,j_k}))> \hat{t}
\]
provided $k$ is sufficiently large.  Furthermore, by the construction preceding this lemma, we may assume that $\sp(a_{n_k,j_k}) \subseteq \sp(a)$ and $\sp(b_{n_k,j_k}) \subseteq \sp(b)$ for all $k \geq 1$.  Hence, for sufficiently large $k$, we obtain that
\[
\lambda_{a_{n_k,j_k}}^{\tr_{\D_{n_k,j_k}}}(s) = \alpha_1 < \epsilon +  
\beta_1 = \epsilon +  \lambda_{b_{n_k,j_k}}^{\tr_{\D_{n_k,j_k}}}(s)
\] 
for all $s \in [0, \hat{t}]$. Thus 
\[
\int_0^t \lambda_{a_{n_k,j_k}}^{\tr_{\D_{n_k,j_k}}}(s) \, ds 
< \epsilon t + \int_0^t \lambda_{b_{n_k,j_k}}^{\tr_{\D_{n_k,j_k}}}(s) \, ds
\] 
for $t \in [0, \hat{t}]$ provided $k$ is sufficiently large.  Therefore, as $\epsilon' > \epsilon$, 
passing to a subsequence of $( n_k )_{k\geq 1}$ if necessary, we may
assume that $t_k > \hat{t}$ for all $k \geq 1$.

Repeating ideas from the first part of this proof, for all $k \geq 1$ let $\E_k = \D_{n_k, j_k}$, let $\tau_k$ be the unique tracial state on $\D_{n_k, j_k}$, and let $\prod_{l=1}^{\infty} \E_l$ and $\bigoplus_{l=1}^{\infty} \E_l$ be as before.  For all $k \geq 1$ let $\tilde{\tau}_k$ denote the tracial state on $\prod_{l=1}^{\infty} \E_l$ defined by
\[
\tilde{\tau}_k((y_l)_{l\geq 1}) = \tau_k(y_k).
\]
Since $T(\prod_{l=1}^{\infty} \E_l)$ is weak$^*$ compact,
there exists a subnet $(\tilde{\tau}_{k_{\alpha}} )_{\alpha \in I}$ of
$( \tilde{\tau}_k )_{k\geq 1}$ and a
$\mu \in T(\prod_{l=1}^{\infty} \E_l)$ such that
\[
\mu = \text{w}^*\text{-}\lim_{\alpha} \tilde{\tau}_{k_{\alpha}}.
\]
Finally, since $t_k > \hat{t}$ for all $k\geq 1$ and since $[\hat{t}, 1]$ 
is compact, passing to a subnet if necessary, we may assume that
there exists $r \in [\hat{t},1]$ such that 
$\lim_\alpha t_{k_{\alpha}} = r$.

By definition, it is clear that
\[
\bigoplus_{l=1}^{\infty} \E_l \subseteq \ker(\mu)
\]
and thus $\mu$ induces a tracial state on $\prod_{l=1}^{\infty} \E_l/
\bigoplus_{l=1}^{\infty} \E_l$ which will also be denoted $\mu$.  
By previous discussions, there exists a well-defined unital $^*$-homomorphism
\[
\Phi : \A \rightarrow \prod_{l=1}^{\infty} \E_l /   \bigoplus_{l=1}^{\infty} \E_l
\]
defined for all $x \in \A$ by
\[
\Phi(x) = [ (x_{n_l, j_l} )_{l=1}^{\infty}].
\]
Therefore $\mu \circ \Phi$ is a tracial state on $\A$. 

We claim for all $d \in \A_+$ and $t \in [0,1]$ that
\[
\int_0^t \lambda_d^{\mu\circ \Phi} (s) \, ds = \lim_{\alpha}
\int_0^t \lambda_{d_{n_{k_{\alpha}}}}^{\tau_{k_{\alpha}}} (s) \, ds.
\]
To see this, first note by Lemma \ref{lem:IntLambdaChar} that
\[
\int_0^t \lambda_d^{\mu \circ \Phi} (s) \, ds = 
\sup\{ \mu (\Phi(d c)) \, \mid \, c \in \A,
 0 \leq c \leq 1_\A, \mu (\Phi(c)) = t \}.
 \]
Let $\delta > 0$ be arbitrary and choose $c \in \A$ with $0 \leq c \leq 1_\A$ such that 
$\mu (\Phi(c)) = t$  
and
\[
\mu(\Phi(dc) )
+ \delta > \int_0^t \lambda_d^{\mu \circ \Phi} (s) \, ds.
\]
Notice
\[
\mu (\Phi(dc) )
= \lim_{\alpha} \tau_{k_{\alpha}}(d_{n_{k_{\alpha}}} c_{n_{k_{\alpha}}})
\qand 
\lim_{\alpha} \tau_{k_{\alpha}}(c_{n_{k_{\alpha}}}) = t
\]
(as $\tau_{k_{\alpha}}(d_{n_{k_{\alpha}}} c_{n_{k_{\alpha}}}) = \tau_{k_{\alpha}}(d_{n_{k_{\alpha}}, j_{k_\alpha}} c_{n_{k_{\alpha}}, j_{k_\alpha}})$). Since Lemma \ref{lem:IntLambdaChar} implies
\begin{align*}
0 \leq \tau_{k_{\alpha}}(d_{n_{k_{\alpha}}} c_{n_{k_{\alpha}}}) & \leq \int^{\tau_{k_{\alpha}}(c_{n_{k_{\alpha}}})}_0  \lambda^{\tau_{k_\alpha}}_{d_{n_{k_{\alpha}}}}(s)\, ds \\
& = \int^t_0 \lambda^{\tau_{k_\alpha}}_{d_{n_{k_{\alpha}}}}(s)\, ds - \int^t_{\tau_{k_{\alpha}}(c_{n_{k_{\alpha}}})} \lambda^{\tau_{k_\alpha}}_{d_{n_{k_{\alpha}}}}(s)\, ds
\end{align*}
and part (\ref{EFNorm}) of Theorem \ref{thm:eigenvalue-function-properties} implies that
\[
\left|\int^t_{\tau_{k_{\alpha}}(c_{n_{k_{\alpha}}})} \lambda^{\tau_{k_\alpha}}_{d_{n_{k_{\alpha}}}}(s)\, ds\right| \leq \left|t - \tau_{k_{\alpha}}(c_{n_{k_{\alpha}}})\right| \left\|d_{n_{k_{\alpha}}}\right\| \leq \left|t - \tau_{k_{\alpha}}(c_{n_{k_{\alpha}}})\right| \left\|d\right\|,
\]
we have for all sufficiently large $\alpha$ that
\[
\int_0^t \lambda_{d_{n_{k_{\alpha}}}}^{\tau_{k_{\alpha}}} (s) \, ds 
+ \frac{1}{2}\delta > \int_0^t \lambda_d^{\mu \circ \Phi} (s) \, ds.
\]
Therefore, since $\delta > 0$ was arbitrary,
we have that 
\[
\liminf_{\alpha} \int_0^t 
\lambda_{d_{n_{k_{\alpha}}}}^{\tau_{k_{\alpha}}} (s) \, ds 
\geq \int_0^t \lambda_d^{\mu \circ \Phi} (s) \, ds.
\]

To complete the claim, notice by Lemma \ref{lem:IntLambdaChar} that for each $k \geq 1$ there exists a $\tilde{c}_k \in \E_{k}$ such that
$0 \leq \tilde{c}_k \leq 1_{\E_k}$, $\tau_{k}(\tilde{c}_k) = t$, and
\[
\tau_k(d_{n_k} \tilde{c}_{k}) + \frac{1}{k}
> \int_0^t 
\lambda_{d_{n_k}}^{\tau_{k}} (s) \, ds.
\] 
Let $\tilde{c} = ( \tilde{c}_{l})_{l\geq 1}$ and $\tilde{d} = ( {d}_{n_l, j_l})_{l\geq 1}$ which are elements of $\prod_{l=1}^{\infty} \E_l$. Therefore 
\begin{align*}
 \mu(\tilde{d} \tilde{c}) &= \lim_{\alpha} \tau_{k_{\alpha}}
(d_{n_{k_{\alpha}}} \tilde{c}_{k_{\alpha}})\\
& \geq  \limsup_{\alpha} \left( -\frac{1}{{k_{\alpha}}} + \int_0^t 
\lambda^{\tau_{k_{\alpha}}}_{d_{n_{k_{\alpha}}}} (s) \, ds \right) \\
& =  \limsup_{\alpha} \int_0^t 
\lambda^{\tau_{k_{\alpha}}}_{d_{n_{k_{\alpha}}}} (s) \, ds.
\end{align*}
Furthermore, we clearly have that $0 \leq \tilde{c} \leq 1$ and 
$\mu(\tilde{c}) = \lim_{\alpha} \tau_{k_{\alpha}}(\tilde{c}_{k_{\alpha}})
= t$.
Therefore, by Lemma \ref{lem:IntLambdaChar}, 
\[
\int_0^t \lambda^{\mu}_{\tilde{d}} (s) \, ds \geq
\limsup_{\alpha} \int_0^t 
\lambda^{\tau_{k_{\alpha}}}_{d_{n_{k_{\alpha}}}} (s) \, ds.
\]   
Thus, as $\lambda^{\mu \circ \Phi}_{d} = \lambda^{\mu}_{\tilde{d}}$, we obtain that
\[
\int_0^t \lambda^{\mu \circ \Phi}_{d}(s) \, ds \geq 
\limsup_{\alpha} \int_0^t 
\lambda^{\tau_{k_{\alpha}}}_{d_{n_{k_{\alpha}}}} (s) \, ds
\] 
thereby completing the proof of the claim.

Choose $\epsilon''$ such that $\epsilon < \epsilon'' < \epsilon'$.  Since $a_{n_k, j_k}$ and $b_{n_k, j_k}$ are self adjoint elements such that 
$\sp(a_{n_k, j_k}) \subseteq \sp(a)$ and $\sp(b_{n_k, j_k}) \subseteq \sp(b)$ for all $k \geq 1$ by construction, we have that $\lambda^{\tau_k}_{a_{n_k, j_k}}$ and
$\lambda^{\tau_k}_{b_{n_k, j_k}}$ are bounded in the $L_\infty$-norm by Theorem \ref{thm:eigenvalue-function-properties}.  Therefore, as $\lim_\alpha t_{k_\alpha} = r$, we obtain by
(\ref{equ:Christmas20158:54PM}) that 
\[
\int_0^{r} \lambda^{\tau_{k_{\alpha}}}_{a_{n_{k_{\alpha}}, 
j_{k_{\alpha}}}} (s) \, ds > \epsilon'' r 
+ \int_0^{r} \lambda^{\tau_{k_{\alpha}}}_{b_{n_{k_{\alpha}}, 
j_{k_{\alpha}}}} (s) \, ds,
\]
for all sufficiently large $\alpha$.  Hence, by taking the limit and applying the claim, we have that 
\[
\int_0^{r} \lambda^{\mu \circ \Phi}_{a}
(s) \, ds \geq \epsilon'' r 
+ \int_0^{r} \lambda^{\mu \circ \Phi}_{b}
(s) \, ds,
\]
which contradicts the assumptions on $a, b$ (as $\epsilon'' > \epsilon$
and $r > 0$).  Hence the first inequality of part (2) has been demonstrated.

The proof of the second inequality in part (2) follows by similar arguments (or by repeating the proof of the first inequality with $t_k = 1$ for all $k$ and reversing the roles of $a$ and $b$).
\end{proof}

\section{Approximations in TAF C$^*$-Algebras}

One may interpret Lemma \ref{lem:AsymptoticArgument} as saying that in a TAF C$^*$-algebra, if one self-adjoint operator is close to majorizing another with respect to every tracial state, then so are the finite dimensional approximates.   Thus the first goal of this section is to demonstrate that if one self-adjoint matrix $A$  is close to majorizing another self-adjoint matrix $B$, then $B$ is almost in the closed convex hull of the unitary orbit of $A$ (see Lemma \ref{lem:NormPerturbation}).  Once the finite dimensional portion is handled, we will demonstrate that the remainder outside the finite dimensional C$^*$-algebra can also be manipulated in a similar manner.

To begin, we note the following useful result pertaining to majorization of functions, which is elementary to prove.
\begin{lem}
\label{lem:majorization-technical-trick-with-negatives}
Let $f$ and $g$ be bounded, non-increasing functions on $[0,1]$.  Suppose that there exists a $t_0 \in [0,1]$ such that
\begin{itemize}
\item $\int^t_0 f(s) \, ds \leq \int^t_0 g(s) \, ds$ for all $t \in [0,t_0]$,
\item $\int^1_0 f(s) \, ds = \int^1_0 g(s) \, ds$, and
\item $f(s) \geq g(s)$ for all $s \in [t_0, 1]$.
\end{itemize}
Then $\int^t_0 f(s) \, ds \leq \int^t_0 g(s) \, ds$ for all $t \in [0,1]$.
\end{lem}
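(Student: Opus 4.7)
The claim is purely a statement about real-valued integrals, so no operator algebra machinery is needed; the proof will reduce to two straightforward cases depending on whether $t \le t_0$ or $t \ge t_0$.

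The plan is as follows. For $t \in [0, t_0]$, the desired inequality
$\int_0^t f(s)\,ds \le \int_0^t g(s)\,ds$
is already given by the first hypothesis, so nothing needs to be done.

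For $t \in [t_0, 1]$, I would use the second hypothesis to rewrite
\[
\int_0^t f(s)\,ds \;=\; \int_0^1 f(s)\,ds - \int_t^1 f(s)\,ds \;=\; \int_0^1 g(s)\,ds - \int_t^1 f(s)\,ds.
\]
Then, because $t \ge t_0$ and $f(s) \ge g(s)$ on $[t_0,1]$ by the third hypothesis, we have $f(s) \ge g(s)$ on $[t,1]$, so $\int_t^1 f(s)\,ds \ge \int_t^1 g(s)\,ds$. Substituting this back yields
\[
\int_0^t f(s)\,ds \;\le\; \int_0^1 g(s)\,ds - \int_t^1 g(s)\,ds \;=\; \int_0^t g(s)\,ds,
\]
which completes the proof.

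There is no genuine obstacle here; the monotonicity and boundedness assumptions on $f,g$ are used only implicitly to guarantee that the integrals are finite and well defined. The content of the lemma is that the majorization inequality, which is automatic up to $t_0$ and becomes an equality at $t=1$, propagates through the remaining interval $[t_0,1]$ as soon as $f$ dominates $g$ pointwise there—a one-line telescoping argument.
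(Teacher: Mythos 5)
Your proof is correct and is exactly the expected argument; the paper in fact omits the proof of this lemma entirely, remarking only that it is elementary, so there is no alternative approach in the paper to compare against.
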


\begin{lem}
\label{lem:NormPerturbation}
Let $\epsilon > 0$ and let $r \in (0,1)$.
There exists a $\delta > 0$ such that
if
\begin{enumerate}
\item $n \geq 1$,
\item $A, B \in (\M_n)_+$,
\item $\tr(P_{\ker(B)}) \geq r$ (where $P_{\ker(B)}$ is the projection onto the kernel of $B$),
\item $\tr(B) \leq \tr(A) + \delta$ (i.e.
$\int_0^1 \lambda^{\tr}_B (s) \, ds \leq \delta +
\int_0^1 \lambda^{\tr}_A(s) \, ds$), \label{assum:trace}
\item $\int_0^t \lambda_A^{\tr} (s) \, ds \leq
\delta t + \int_0^t \lambda_B^{\tr}(s) \, ds$ for all $t \in [0, 1- \tr(P_{\ker(B)})]$, and \label{assum:near-zero}
\item $\int_0^t \lambda_A^{\tr} (s) \, ds \leq \delta t + \epsilon( t - (1- \tr(P_{\ker(B)})))
+ \int_0^t \lambda_B^{\tr}(s) \, ds$ for all $t \in [1- \tr(P_{\ker(B)}), 1]$, \label{assume:not-too-big-near-one}
\end{enumerate}
then there exists a self-adjoint $B' \in \M_n$ such that
$\| B' - B \| < 2 \epsilon$ and $A \prec_\tr B'$.   In fact, one may take
\[
\delta = \min \left\{ \frac{\epsilon}{3}, \frac{\epsilon r}{4}
\right\} \leq \frac{1}{4}
\epsilon r.
\]
\end{lem}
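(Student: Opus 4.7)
My plan is to construct $B'$ sharing an eigenbasis with $B$, so that $B - B'$ is diagonal in that basis and $\|B - B'\|$ equals the largest eigenvalue shift.  Write the eigenvalues of $B$ as $\beta_1 \geq \cdots \geq \beta_k > 0 = \beta_{k+1} = \cdots = \beta_n$, where $k := n(1 - \tr(P_{\ker(B)}))$ and $m := n - k$; write the eigenvalues of $A$ as $\mu_1 \geq \cdots \geq \mu_n \geq 0$; and set $S := \Tr(A) - \Tr(B) - \delta k$.  Hypothesis~(\ref{assum:trace}) and hypothesis~(\ref{assume:not-too-big-near-one}) at $t = 1$ give the two-sided bound $-n\delta(2-q) \leq S \leq m(\delta + \epsilon)$, where $q := \tr(P_{\ker(B)}) \geq r$.

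First I shift the range eigenvalues by $\delta$, setting $b_i := \beta_i + \delta$ for $i = 1, \ldots, k$, so that hypothesis~(\ref{assum:near-zero}) evaluated at $t = l/n$ yields $\sum_{i=1}^l \mu_i \leq \sum_{i=1}^l b_i$ for $l \leq k$.  I then distribute $S$ among the kernel positions $b_{k+1}, \ldots, b_n$ in two cases.  When $S \leq 0$ I take $b_i := S/m$ for every $i > k$; the estimate $|S/m| \leq \delta(2-q)/q \leq 2\delta/r \leq \epsilon/2$ uses the choice of $\delta$.  When $S > 0$ I set $j^* := \lfloor S/(\delta + \epsilon)\rfloor \leq m$, assign the value $\delta + \epsilon$ to exactly $j^*$ kernel positions, assign the residual $S - j^*(\delta + \epsilon) \in [0, \delta + \epsilon)$ to one further kernel position (if $j^* < m$), and set the remaining kernel $b_i$'s to zero; each kernel value then lies in $[0, \delta + \epsilon]$.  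In either case $\|B' - B\| = \max_i |b_i - \beta_i| \leq \delta + \epsilon < 2\epsilon$, since $\delta \leq \epsilon/3$.

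It remains to verify $A \prec_\tr B'$.  The trace equality $\Tr(B') = \Tr(A)$ is immediate from the construction.  For the cumulative inequality, the decreasing rearrangement of the values $\{b_i\}$ has partial sums at least as large as the partial sums taken in the order above, so it suffices to prove $\sum_{i=1}^l \mu_i \leq \sum_{i=1}^l b_i$ at each integer $l$; linear interpolation between integer points then promotes this to $\int_0^t \lambda_A^\tr \leq \int_0^t \lambda_{B'}^\tr$ for all $t \in [0,1]$.  The case $l \leq k$ was handled above.  When $S \leq 0$ and $l > k$, I apply Lemma~\ref{lem:majorization-technical-trick-with-negatives} with $t_0 = k/n$, $f = \lambda_A^\tr$, and $g = \lambda_{B'}^\tr$, using that $\lambda_{B'}^\tr(s) = S/m \leq 0 \leq \lambda_A^\tr(s)$ on $[k/n, 1]$.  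When $S > 0$, the verification is direct: for $l \in (k, k + j^*]$ the partial sum equals $\Tr(B) + \delta l + \epsilon(l - k)$, precisely the bound of hypothesis~(\ref{assume:not-too-big-near-one}); and for $l > k + j^*$ the partial sum has already reached its terminal value $\Tr(A)$, which trivially dominates $\sum_{i=1}^l \mu_i \leq \Tr(A)$.  The main obstacle is the $S > 0$ regime: a naive uniform kernel shift cannot simultaneously meet the norm bound and hypothesis~(\ref{assume:not-too-big-near-one}), and the discrete distribution above is engineered so that the cumulative sum tracks the hypothesis bound on a precisely-sized block of kernel positions before becoming constant at the target trace.
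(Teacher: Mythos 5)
Your proof is correct and takes essentially the same approach as the paper: shift the nonzero eigenvalues up by $\delta$ and split on the sign of $S = \Tr(A) - \Tr(B) - \delta k$, filling the kernel block with a uniform value $S/m$ when $S \leq 0$ (the paper's Case 1 with $x = -S/m$) or with a block of $\delta+\epsilon$'s followed by a residual and zeros when $S > 0$ (the paper's Case 2, where your $j^*$ is the paper's $m_1-1$). The only cosmetic difference is that you phrase the majorization check via discrete partial sums and a rearrangement observation, while the paper stays at the level of eigenvalue functions and the operator inequality $B' \geq B_0$; both reduce to Lemma~\ref{lem:majorization-technical-trick-with-negatives} in the $S \leq 0$ case and to a direct comparison against hypothesis~(\ref{assume:not-too-big-near-one}) in the $S > 0$ case.
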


\begin{proof}
Fix $n \geq 1$ and choose positive contractions $A, B \in \M_n$ that satisfy the
assumptions of the lemma with $\delta$ as described.  Without loss of generality, we may assume that
\[
A = \diag(\alpha_1, \alpha_2, \ldots, \alpha_n) \qqand B = \diag(\beta_1, \beta_2, \ldots, \beta_n)
\]
where $w \geq \alpha_1 \geq \alpha_2 \geq \cdots \geq \alpha_n \geq 0$ and $w \geq \beta_1 \geq \beta_2 \geq \cdots \geq \beta_n \geq 0$.

Since $\tr(P_{\ker(B)}) \geq r$, $\ker(B) \neq \{0\}$ so we may select $k \in \{0, 1,\ldots, n-1\}$ such that $\beta_{k+1} = 0$ yet $\beta_k \neq 0$ (or $k = 0$).  Notice
\begin{equation}
\tr(P_{\ker(B)}) = \frac{n-k}{n}  \geq r.
\label{equ:Jan2720165PM}
\end{equation}

Let
\[
B_0 = \diag \left( \beta_1 + \delta, \ldots, \beta_{k} + \delta, 0, 0, \ldots, 0 \right).
\]
Therefore $\left\|B - B_0\right\| < \delta$ and, by assumption (\ref{assum:near-zero}),
\begin{equation}
\int^{t}_0 \lambda^\tr_A(s) \, ds \leq \int^t_0 \lambda^\tr_{B_0}(s) \, ds
\label{equ:June2720166PM}
\end{equation}
for all $t \leq \frac{k}{n}$.

The remainder of the proof is divided into two cases:

\underline{\textit{Case 1}:  $\tr(B_0) \geq \tr(A)$.}  Note that $\tr(B_0) - \frac{k}{n}\delta = \tr(B)$.  Hence, by assumption (\ref{assum:trace}),
\[
\tr(B_0)  - \frac{k}{n}\delta < \tr(A) + \delta \quad \text{ so }\quad \tr(B_0) < \tr(A) + 2 \delta.
\]
Hence
\[
\tr(A) \leq \tr(B_0) \leq \tr(A) + 2 \delta.
\]
Therefore there exists an $0 < \alpha \leq 2 \delta$
such that
\[
\tr(B_0) - \alpha = \tr(A).
\]

By the definition of $\delta$ and by (\ref{equ:Jan2720165PM}),
\[
2 \delta \leq \frac{1}{2}\epsilon r \leq \frac{\epsilon}{2}
\frac{n -k}{n}.
\]
Therefore
\[
0 < \frac{\alpha}{n-k} \leq \frac{\epsilon}{2n}.
\]
Let 
\[
x = \frac{\alpha n}{n-k}.
\]
Thus $0 \leq x \leq \frac{\epsilon}{2}$.  Finally define
\[
B' = \diag(\beta_1 + \delta, \beta_2 + \delta, \ldots, \beta_k + \delta,
-x, -x, \ldots, -x).
\]
By construction $\left\|B' - B\right\| < \epsilon$ and $\tr(B') = \tr(A)$.  

To see that $A \prec_\tr B'$, notice that $\lambda^\tr_{B'}(s) = \lambda^\tr_{B_0}(s)$ for all $s \in \left[0, \frac{k}{n}\right)$ and thus
\[
\int^t_0 \lambda^\tr_{B'}(s) \, ds= \int^t_0 \lambda^\tr_{B_0}(s) \, ds\geq \int^t_0 \lambda^\tr_{A}(s)\, ds
\]
for all $t \in \left[0, \frac{k}{n}\right)$.  Furthermore, notice that if $s \in \left[\frac{k}{n}, 1\right)$, then
\[
\lambda^\tau_{B'}(s) = -x \leq 0 \leq \lambda^\tau_{A}(s) 
\]
since $A$ is positive.  Hence $A \prec_\tr B'$ by Lemma \ref{lem:majorization-technical-trick-with-negatives}.

\underline{\textit{Case 2}:  $\tr(B_0) < \tr(A)$.}  Note by assumption (\ref{assume:not-too-big-near-one}) that
\[
\tr(A) \leq \tr(B_0) + (\delta + \epsilon)\left(\frac{n-k}{n} \right).
\]
Hence there exists an $m_1 \in \{1, \ldots, n-k\}$ and a $\gamma \in [0, \delta + \epsilon]$ such that
\[
\tr(A) = \tr(B_0) + (\delta + \epsilon) \frac{m_1-1}{n} + \gamma \frac{1}{n}.
\]

Let
\[
B' = \diag(\beta_1 + \delta, \beta_2 + \delta,  \ldots, \beta_k + \delta, \underbrace{\delta + \epsilon,   \ldots, \delta + \epsilon}_{m_1-1}, \gamma, 0, \ldots, 0).
\]
By construction $\left\|B' - B\right\| < 2\epsilon$ and $\tr(B') = \tr(A)$.  

To see that $A \prec_\tr B'$, notice that
\[
\lambda^\tr_{B'}(s) \geq \lambda^\tr_{B_0}(s) = \lambda^\tr_{B}(s) + \delta
\]
for all $s \in \left[0, \frac{k}{n}\right)$ and thus
\[
\int^t_0 \lambda^\tr_{B'}(s) \, ds \geq  \int^t_0 \lambda^\tr_{B_0}(s) \, ds = \delta t +  \int^t_0 \lambda^\tr_{B}(s) \, ds\geq \int^t_0 \lambda^\tr_{A}(s)\, ds
\]
for all $t \in \left[0, \frac{k}{n}\right)$.  Furthermore, notice that if $t \in \left[\frac{k}{n}, \frac{k+m_1-1}{n}\right)$, then
\begin{align*}
\int^t_0 \lambda^\tr_{B'}(s) \, ds & \geq \int^{\frac{k}{n}}_0 \lambda^\tr_{B_0}(s)\, ds + \int^t_{\frac{k}{n}} (\epsilon + \delta) \, ds\\
& \geq \int^{\frac{k}{n}}_0 \lambda^\tr_{B}(s)  \, ds + \delta \frac{k}{n}  + (\epsilon + \delta) \left(t - \frac{k}{n}\right)\\
& = \int^{t}_0 \lambda^\tr_{B}(s)  \, ds + \delta t  + \epsilon \left(t - \frac{k}{n}\right)\\
&\geq \int^t_0 \lambda^\tr_{A}(s)\, ds
\end{align*}
as $\frac{k}{n} = 1 - \tr(P_{\ker(B)})$.  Next, since $\lambda^\tr_{B'}(s)$ and $\lambda^\tr_A(s)$ are constant on $\left[\frac{k+m_1-1}{n}, \frac{k+m_1}{n}\right)$, we note since 
\[
\int^\frac{k+m_1-1}{n}_0 \lambda^\tr_{B'}(s) \, ds \geq \int^\frac{k+m_1-1}{n}_0 \lambda^\tr_{A}(s)\, ds
\]
that
\[
\int^t_0 \lambda^\tr_{B'}(s) \, ds \geq \int^t_0 \lambda^\tr_{A}(s)\, ds
\]
for all $t \in \left[\frac{k+m_1-1}{n}, \frac{k+m_1}{n}\right)$ if and only if
\[
\tr(A) = \tr(B') = \int^\frac{k+m_1}{n}_0 \lambda^\tr_{B'}(s) \, ds \geq \int^\frac{k+m_1}{n}_0 \lambda^\tr_{A}(s)\, ds,
\]
which is true as $A \geq 0$. Finally notice that if $s \in \left[\frac{k+m_1}{n}, 1\right)$, then
\[
\lambda^\tau_{B'}(s) = 0 \leq \lambda^\tau_{A}(s) 
\]
since $A$ is positive.  Hence $A \prec_\tr B'$ by Lemma \ref{lem:majorization-technical-trick-with-negatives}.
\end{proof}

To see the necessity of some lower bound on the size of the kernel for the conclusions of Lemma \ref{lem:NormPerturbation} to hold, for each $n \in \bN$ consider
\[
A_n = \diag(1, 1, \ldots, 1, 0) \qqand B_n = I_n.
\]
It is elementary to see that given a $\delta > 0$ there exists an $n$ such that $A_n$ and $B_n$ satisfy the assumptions of Lemma \ref{lem:NormPerturbation}.  However, it is not possible to perturb $B_n$ to $B'_n$ such that $A_n \prec_\tr B'_n$ unless $\left\|B_n - B'_n\right\| \geq 1$.

Due to the necessity of having non-trivial kernel in our matrix approximates, we will need some control over the trace of the kernel.  This is provided by the following result.
\begin{lem}
\label{lem:size-of-kernel}  
Let $\A$ be a unital, simple C$^*$-algebra and let $a \in \A_+$ such that 
$0 \in \sp(a)$.
Given $\epsilon > 0$ there exists a $\delta > 0$ such that if $b \in \A_+$ and $\left\| a - b \right\| < \delta$ then
\[
\mu_{\tau, b}([0, \epsilon)) \geq \frac{1}{2} \mu_{\tau, a}([0, \epsilon))
\]
for all $\tau \in T(\A)$.
\end{lem}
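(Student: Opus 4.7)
The starting point is to translate the desired measure inequality into a statement about eigenvalue functions. By Theorem \ref{thm:eigenvalue-function-properties}(\ref{EFNorm}), $|\lambda^\tau_a(s) - \lambda^\tau_b(s)| \leq \|a - b\| < \delta$ for every $s \in [0,1)$. Hence if $\lambda^\tau_a(s) < \epsilon - \delta$ then $\lambda^\tau_b(s) < \epsilon$, so using the identity $\mu_{\tau, c}([0, r)) = m(\{s : \lambda^\tau_c(s) < r\})$ (a consequence of the defining relationship between $\lambda^\tau_c$ and the push-forward of Lebesgue measure under $\lambda^\tau_c$), we obtain $\mu_{\tau,b}([0,\epsilon)) \geq \mu_{\tau,a}([0,\epsilon-\delta))$. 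The lemma therefore reduces to the following purely intrinsic statement about $a$: given $\epsilon>0$, there exists $\delta>0$ with
\[
\mu_{\tau,a}([0,\epsilon-\delta)) \geq \tfrac{1}{2}\mu_{\tau,a}([0,\epsilon)) \qforal \tau \in T(\A).
\]

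To produce such a $\delta$, I will exploit simplicity of $\A$ in two ways. First, in a simple unital C$^*$-algebra every tracial state is automatically faithful, because the ideal $\{x \in \A : \tau(x^*x) = 0\}$ is closed and two-sided. Second, since $0 \in \sp(a)$, the continuous function $h : [0,\infty) \to [0,1]$ that equals $1$ at $0$, vanishes on $[\epsilon/4, \infty)$, and is linear in between satisfies $h(a) \neq 0$ (as $1 = h(0) \in \sp(h(a))$). Faithfulness therefore gives $\tau(h(a))>0$ for each $\tau$. Since $\tau \mapsto \tau(h(a))$ is weak-$*$ continuous on the weak-$*$ compact set $T(\A)$, the infimum $c_0 := \inf_{\tau \in T(\A)} \tau(h(a))$ is strictly positive, and from $h \leq \chi_{[0,\epsilon/4]}$ we conclude the uniform lower bound $\mu_{\tau,a}([0,\epsilon/4]) \geq c_0 > 0$.

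To upgrade this into the desired ratio inequality, I would use continuous functional calculus to approximate $\chi_{[0,\epsilon)}$ from below. For each $\alpha \in (0,\epsilon/2)$ let $f_\alpha : [0,\infty) \to [0,1]$ be continuous with $f_\alpha = 1$ on $[0, \epsilon - 2\alpha]$, $f_\alpha = 0$ on $[\epsilon - \alpha, \infty)$, and linear on the bridge. Then $\chi_{[0,\epsilon-2\alpha]} \leq f_\alpha \leq \chi_{[0,\epsilon-\alpha)}$, so $\tau(f_\alpha(a)) \leq \mu_{\tau,a}([0,\epsilon-\alpha)) \leq \mu_{\tau,a}([0,\epsilon))$, and the continuous functions $\tau \mapsto \tau(f_\alpha(a))$ increase pointwise to $\tau \mapsto \mu_{\tau,a}([0,\epsilon))$ as $\alpha \searrow 0$.

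The main obstacle is to promote this pointwise monotone convergence to a \emph{uniform} estimate of the form $\tau(f_{\alpha_0}(a)) \geq \tfrac{1}{2}\mu_{\tau,a}([0,\epsilon))$, since the limit function is only lower semicontinuous. I would handle this by a compactness/contradiction argument combined with the lower bound from the previous paragraph: split $T(\A)$ into the region $\{\tau : \mu_{\tau,a}([0,\epsilon)) \leq 2c_0\}$, where the inequality is automatic from $\mu_{\tau,a}([0,\epsilon-\delta)) \geq \mu_{\tau,a}([0,\epsilon/4]) \geq c_0$ as soon as $\delta < \epsilon/4$, and the complementary region. If uniform control fails on the complement, one obtains sequences $\alpha_n \to 0$ and $\tau_n \in T(\A)$ with $\tau_n(f_{\alpha_n}(a)) < \tfrac{1}{2}\mu_{\tau_n,a}([0,\epsilon))$; passing to a weak-$*$ cluster point $\tau_\infty$ and exploiting upper semicontinuity of $\tau \mapsto \tau(f_\alpha(a))$ together with the monotonicity $f_{\alpha_n} \leq f_{\alpha_m}$ for $n \geq m$ produces a contradiction with the pointwise inequality at $\tau_\infty$. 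Combining everything, $\delta < \min\{\alpha_0, \epsilon/4\}$ yields $\mu_{\tau,b}([0,\epsilon)) \geq \mu_{\tau,a}([0,\epsilon-\delta)) \geq \tfrac{1}{2}\mu_{\tau,a}([0,\epsilon))$ for all $\tau \in T(\A)$.
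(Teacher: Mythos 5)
Your reduction to the intrinsic estimate $\mu_{\tau,a}([0,\epsilon-\delta)) \geq \frac{1}{2}\mu_{\tau,a}([0,\epsilon))$ is a genuinely different route from the paper: you pass through part (\ref{EFNorm}) of Theorem~\ref{thm:eigenvalue-function-properties} and the push-forward identity for eigenvalue functions, whereas the paper works directly with a single fixed continuous bump $f$ (equal to $0$ on $[0,\delta_1]$, $1$ on $[\epsilon,\infty)$) and norm-continuity of functional calculus, chaining $\mu_{\tau,b}([0,\epsilon)) \geq 1 - \tau(f(b)) > 1 - \tau(f(a)) - \delta_2 \geq \mu_{\tau,a}([0,\delta_1)) - \delta_2$. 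Your reduction is correct, and avoiding the extra $\delta_2$ is a modest simplification. Both routes, however, ultimately rest on the same uniform claim over $T(\A)$: that one can retract the left cutoff from $\epsilon$ by a fixed amount without losing a definite fraction of the mass.

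The compactness/contradiction argument you propose for that uniform claim has a directional error in the semicontinuity, and as written it does not close. If $\tau_n \to \tau_\infty$ and $\tau_n(f_{\alpha_n}(a)) < \frac{1}{2}\mu_{\tau_n,a}([0,\epsilon))$, then using $f_{\alpha_n} \geq f_{\alpha_m}$ for $n\geq m$ (note this is the reverse of the inequality you wrote) and letting $n\to\infty$ along the subnet gives only $\tau_\infty(f_{\alpha_m}(a)) \leq \frac{1}{2}\limsup_n \mu_{\tau_n,a}([0,\epsilon))$, and then $m\to\infty$ gives $\mu_{\tau_\infty,a}([0,\epsilon)) \leq \frac{1}{2}\limsup_n \mu_{\tau_n,a}([0,\epsilon))$. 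To derive a contradiction you would need $\limsup_n \mu_{\tau_n,a}([0,\epsilon)) \leq \mu_{\tau_\infty,a}([0,\epsilon))$, i.e.\ upper semicontinuity of $\tau \mapsto \mu_{\tau,a}([0,\epsilon))$; but, as you yourself observe, this map is only lower semicontinuous (it is the increasing limit of the continuous maps $\tau \mapsto 1 - \tau(g_n(a))$ for continuous $g_n \searrow \chi_{[\epsilon,\infty)}$). The split along $\{\tau : \mu_{\tau,a}([0,\epsilon)) \leq 2c_0\}$ does not repair this: that set is closed (its complement is defined by an LSC function strictly exceeding a constant, hence open), so the offending sequence lives in the open complement, and the cluster point $\tau_\infty$ can fall back into the closed piece, where the inequality $\mu_{\tau_\infty,a}([0,\epsilon)) > 2c_0$ is unavailable. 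You end up with the uninformative bound $\mu_{\tau_\infty,a}([0,\epsilon)) \leq \frac{1}{2}$, which contradicts nothing. I would point out that the paper's own justification of the corresponding step --- ``since $T(\A)$ is weak$^*$-compact, there exists $\delta_1$ with $\mu_{\tau,a}([0,\delta_1)) \geq \frac{2}{3}\mu_{\tau,a}([0,\epsilon))$ for all $\tau$'' --- is equally terse on exactly this semicontinuity point, so it is worth your while to find a mechanism that genuinely closes the uniformity, rather than hoping compactness alone suffices.
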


\begin{proof}
Since $0 \in \sp(a)$ and since $\A$ is unital and simple, for all $r > 0$ it must be the case that
\[
\inf_{\tau \in T(\A)} \mu_{\tau, a}([0,r)) > 0
\]
(i.e. GNS representations are faithful).  

Since $T(\A)$ is weak$^*$-compact, there exists a $\delta_1$ with $0 < \delta_1 < \epsilon$ such that
\[
\mu_{\tau, a}([0, \delta_1)) \geq \frac{2}{3}\mu_{\tau, a}([0, \epsilon))
\]
for all $\tau \in T(\A)$.  Let $f : [0, \infty) \rightarrow [0,1]$ be the  continuous function defined by 
\[
f(t) = 
\begin{cases}
1 & t \in [\epsilon, \infty)\\
0 & t \in [0, \delta_1] \\
\makebox{linear on  } & [\delta_1, \epsilon]
\end{cases}.
\]

Again, since $T(\A)$ is weak$^*$-compact, there exists a $\delta_2 > 0$ such that 
\[
\frac{2}{3} \mu_{\tau, a}([0, \epsilon)) - \delta_2 > \frac{1}{2}\mu_{\tau, a}
([0, \epsilon))
\]
for all $\tau \in T(\A)$. Finally, it is well-known that there exists a $\delta > 0$ such that if $b \in \A_+$ and
$\left\| a - b \right\| < \delta$ then 
\[
\left\| f(a) - f(b) \right\| < \delta_2.
\]
Therefore if $b \in \A_+$ is such that $\left\| a - b \right\| < \delta$, then for all $\tau \in T(\A)$ we have that
\begin{align*}
\mu_{\tau, b}([0, \epsilon))
& \geq  1 - \tau(f(b)) \\
& >  1 - \tau(f(a)) - \delta_2 \\
& \geq   \mu_{\tau, a}([0, \delta_1)) - \delta_2\\
& \geq  \frac{2}{3}\mu_{\tau, a}([0, \epsilon)) - \delta_2 \\
& >  \frac{1}{2} \mu_{\tau, a}([0, \epsilon)). \qedhere
\end{align*} 
\end{proof}

As the above will handle the finite dimensional portions of our self-adjoint operators, the following technical result will be the key in handling the remainders.

\begin{lem}
\label{lem:strict-comparison-convex-hull-provided-room}
Let $\epsilon_1 > 0$, let $0 < \epsilon_2 < 1$, and let $\C$ be a unital, simple  C$^*$-algebra with non-trivial tracial simplex,
strict comparison of projections by tracial states, and cancellation of projections.  Suppose that $p, q$ are non-zero projections in $\C$ and $a, b \in p \C_+ p$ are such that  
\begin{enumerate}
\item $p$ and $q$ are orthogonal,
\item $2 \tau(p) < \tau(q) < \epsilon_1$ for all $\tau \in T(\C)$,
\item $\left\| a \right\|, \left\| b \right\| \leq 1$, and
\item $\sp(a)$ and $\sp( b)$ each have a finite number of points.
\end{enumerate}
Then for every projection $r \in \C$ such that $r$ is orthogonal to $p+q$ and
\[
\tau(r) > 6 \left(\frac{\epsilon_1}{\epsilon_2}\right)
\]
for all $\tau \in T(\C)$,  there exists a C$^*$-subalgebra $\D \subseteq \C$ and an element $c \in r \D_{\sa} r$
such that 
\begin{enumerate}[(a)]
\item $1_{\D} = p + q + r$,
\item $p, q, r, a, b, c \in \D$,
\item $\left\| c \right\| < \epsilon_2$, and
\item $a \in \conv(U_{\D}(b + q + c))$.  
\end{enumerate}
\end{lem}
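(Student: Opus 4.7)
The plan is to construct a finite-dimensional C$^*$-subalgebra $\D \cong \bigoplus_m \M_{n_m}$ of $\C$ with $1_\D = p+q+r$ containing $a$ and $b$, to select $c \in r\D_{\sa} r$ block-wise by trace matching, and then to invoke Horn's theorem in each matrix block. The underlying idea is that $b+q$ carries the eigenvalue $1$ with a multiplicity (coming from $q$) that exceeds twice the multiplicity of the support of $a$, since $2\tau(p) < \tau(q)$; this supplies the room needed to majorize $a$, while the large $r$-block absorbs the trace imbalance via a small-norm $c$.

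First I would write $a = \sum_i \alpha_i e_i$ and $b = \sum_j \beta_j f_j$ with orthogonal projections $\{e_i\}, \{f_j\} \subseteq p\C p$ each summing to $p$. Using $2\tau(p) < \tau(q)$ together with strict comparison and cancellation, I would pick orthogonal subprojections $q_1, q_2 \leq q$ with $q_1 \sim q_2 \sim p$, and decompose $r$ into subprojections whose combined rank dominates that of $q$ by a factor exceeding $6/\epsilon_2$. Partial isometries between all these Murray--von Neumann equivalent pieces, together with finer decompositions compatible with the spectral projections $\{e_i\}$ and $\{f_j\}$, generate a finite-dimensional subalgebra $\D \cong \bigoplus_m \M_{n_m}$ of $\C$ with $1_\D = p+q+r$ containing $p, q, r, a, b$, whose block structure inherits the per-block inequalities $2\,\mathrm{rank}(p^{(m)}) \leq \mathrm{rank}(q^{(m)})$ and $\mathrm{rank}(r^{(m)}) > (6/\epsilon_2)\,\mathrm{rank}(q^{(m)})$.

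In each block $\M_{n_m}$ (with normalized trace $\tr_m$, and $p^{(m)}, q^{(m)}, r^{(m)}$ the components of $p, q, r$ in that block) I would set $c^{(m)} = -\lambda_m r^{(m)}$ with
\[
\lambda_m = \frac{\tr_m(q^{(m)}) + \tr_m(b^{(m)}) - \tr_m(a^{(m)})}{\tr_m(r^{(m)})}.
\]
This guarantees $\tau(a) = \tau(b+q+c)$ for every $\tau \in T(\C)$. The estimates $0 \leq \tr_m(a^{(m)}), \tr_m(b^{(m)}) \leq \tr_m(p^{(m)})$ combined with the per-block inequalities give
\[
\lambda_m \leq \frac{\tr_m(q^{(m)})+\tr_m(p^{(m)})}{\tr_m(r^{(m)})} \leq \frac{3}{2}\cdot\frac{\tr_m(q^{(m)})}{\tr_m(r^{(m)})} < \frac{\epsilon_2}{4},
\]
so $\|c\| < \epsilon_2$.

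Within each block the sorted eigenvalue list of $b+q+c$ consists of $\mathrm{rank}(q^{(m)})$ copies of $1$, then the nonzero eigenvalues of $b^{(m)}$ in $[0,1]$, then $\mathrm{rank}(r^{(m)})$ copies of $-\lambda_m$; the sorted list of $a$ consists of the nonzero eigenvalues of $a^{(m)}$ in $[0,1]$ followed by $\mathrm{rank}(q^{(m)}) + \mathrm{rank}(r^{(m)})$ zeros. Using $\mathrm{rank}(q^{(m)}) \geq 2\,\mathrm{rank}(p^{(m)})$ and the equal-trace condition, the majorization $a \prec_{\tr_m} b+q+c$ is then verified directly. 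Horn's theorem in $\M_{n_m}$ realizes $a^{(m)}$ as a finite convex combination of $U_{\M_{n_m}}$-conjugates of $(b+q+c)^{(m)}$, and a direct-product combination across blocks yields the required $a \in \conv(U_\D(b+q+c))$. The main obstacle is the construction of $\D$: embedding the potentially non-commuting spectral projections of $a$ and $b$ together with $p, q, r$ inside a single finite-dimensional subalgebra while realizing the desired per-block rank inequalities is delicate and requires careful use of the hypotheses on $\C$ (simplicity, strict comparison, and cancellation of projections), together with extension arguments to translate the global trace inequalities into per-block ones.
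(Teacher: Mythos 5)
There is a genuine gap, and it is exactly the one you flag yourself at the end: the construction of a single finite-dimensional $\D \cong \bigoplus_m \M_{n_m}$ containing $p, q, r, a, b$ with compatible per-block rank inequalities. When $a, b \in p\C_+ p$ do not commute, their spectral projection families $\{e_i\}$ and $\{f_j\}$ generate a C$^*$-algebra that is typically \emph{not} finite dimensional (already two non-commuting projections in a general simple C$^*$-algebra can generate an infinite-dimensional subalgebra), and the hypotheses here do not include real rank zero, so one cannot simply perturb to a commuting pair either. Your plan then calls for "extension arguments" to fix this, but no such argument is supplied, and it is precisely this step that carries the whole proof. Once $\D$ exists with the asserted block structure, your trace-matching choice of $c$, the bound $\|c\| < \epsilon_2$, and the block-wise majorization and appeal to Horn's theorem are all fine; the problem is that the object you apply them to has not been built.

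The paper sidesteps the obstruction entirely by \emph{never} putting $a$ and $b$ into a common finite-dimensional algebra. Using $2\tau(p) < \tau(q)$ and cancellation it finds orthogonal $q_1, q_2 \leq q$ with partial isometries $v_1, v_2$ implementing $q_1 \sim p \sim q_2$, and forms $\D_1 = C^*(q_1, a, v_1) \cong (\M_2)^{\oplus n}$, which contains a unitary $u_1$ carrying $a$ to $u_1^* a u_1 \in q_1 \D_1 q_1$. After this conjugation, the element sitting where $a$ used to be is supported on $q_1$, which is \emph{orthogonal} to $p$ (where $b$ lives) and to $q_0 = q - q_1 - q_2$. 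The task then decouples into three independent orthogonal pieces living in disjoint finite-dimensional blocks $\D_3$, $\D_4$, $\D_5$ (enlarged inside $r$ using strict comparison): show $u_1^* a u_1 \in \conv(U_{\D_3}(q_1 + c_3))$, show $0 \in \conv(U_{\D_4}(b + q_2 + c_4))$, and show $0 \in \conv(U_{\D_5}(q_0 + c_5))$. Each of these is a one-variable problem to which the Schur--Horn/Birkhoff machinery in matrix algebras applies directly. The potential non-commutativity of $a$ and $b$ never enters. If you want to rescue your approach, you would need to first conjugate $a$ off of $p\C p$ exactly as the paper does; without that move, the finite-dimensional algebra you need does not exist in general.
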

\begin{proof}

Since $2 \tau(p) < \tau(q)$ for all $\tau \in T(\C)$, there exists mutually orthogonal projections $q_1, q_2 \leq q$ such that $p$ is Murray-von Neumann equivalent to $q_1$ and $q_2$.  Choose $v_1, v_2 \in \C$ such that $v_1^*v_1 = q_1$, $v_2^*v_2 = q_2$, and $v_2v_2^* = v_1v_1^* = p$, and let $q_0 = q - q_1 - q_2$.

Write $a = \sum^n_{k=1} \alpha_k p_k$ where $\alpha_k \in \bR$, $\alpha_k \neq \alpha_j$ for all $k \neq j$, and $\{p_k\}^n_{k=1}$ are pairwise orthogonal projections such that $p = \sum^n_{k=1} p_k$.  If $\D_1 = \mathrm{C}^*(\{q_1, a, v_1\})$, then $\D_1$ is a finite dimensional C$^*$-algebra containing a unitary $u_1 \in \D_1$ such that $u_1^*au_1 \in q_1 \D_1 q_1$.  In particular
\[
\D_1 \cong \left( \M_2\right)^{\oplus n}
\]
where
\begin{align*}
a &= (\alpha_1 \oplus 0) \oplus (\alpha_2 \oplus 0)  \oplus \cdots \oplus (\alpha_n \oplus 0), \\
p &= (1 \oplus 0) \oplus (1 \oplus 0) \oplus \cdots \oplus (1 \oplus 0), \\
q_1 &= (0 \oplus 1) \oplus (0 \oplus 1) \oplus \cdots \oplus (0 \oplus 1), \quad \text{ and}\\
u_1^*au_1 &= (0 \oplus \alpha_1) \oplus (0 \oplus \alpha_2)  \oplus \cdots \oplus (0 \oplus \alpha_n).
\end{align*}
Hence $q_1 \D_1 q_1 \cong \bC^{\oplus n}$.

Similarly, write $b = \sum^m_{k=1} \beta_k p'_k$ where $\beta_k \in \bR$, $\beta_k \neq \beta_j$ for all $k \neq j$, and $\{p'_k\}^m_{k=1}$ are pairwise orthogonal projections such that $p = \sum^m_{k=1} p'_k$.  If $\D_2 = \mathrm{C}^*(\{q_2, b, v_2\})$, then $\D_2$ is a finite dimensional C$^*$-algebra.  In particular,
\[
\D_2 \cong \left( \M_2\right)^{\oplus m}
\]
where
\begin{align*}
b &= (\beta_1 \oplus 0) \oplus (\beta_2 \oplus 0)  \oplus \cdots \oplus (\beta_m \oplus 0), \\
p &= (1 \oplus 0) \oplus (1 \oplus 0) \oplus \cdots \oplus (1 \oplus 0), \quad \text{ and} \\
q_2 &= (0 \oplus 1) \oplus (0 \oplus 1) \oplus \cdots \oplus (0 \oplus 1).
\end{align*}

Notice that $\tau(q_0 + q_1 + 1_{\D_2}) < 2 \epsilon_1$ for all $\tau \in T(\C)$ by the assumptions on $p$ and $q$.  Therefore, since $\tau(r) > 6 \left(\frac{\epsilon_1}{\epsilon_2}\right)$ for all $\tau \in T(\A)$, since $\C$ has strict comparison of projections by
tracial states,  and since $r$ is orthogonal to $p + q$, we may construct finite dimensional C$^*$-algebras $\D_3$, $\D_4$, and $\D_5$ such that $q_1 \D_1 q_1 \subseteq \D_3$, $\D_2 \subseteq \D_4$, $\{q_0\} \subseteq \D_5$, $x$ is orthogonal to $y$ for all $x \in \D_k$ and $y \in \D_j$ whenever $k,j \in \{3,4,5\}$ and $k \neq j$, and, if $\ell = \left\lfloor \frac{3}{\epsilon_2} \right\rfloor$, then
\begin{align*}
\D_3 &\cong \left(\M_{\ell+1}\right)^{\oplus n}, \\
\D_4 &\cong \left( \M_{2(\ell+1)}\right)^{\oplus m}, \quad \text{ and} \\
\D_5 & \cong \M_{\ell+1} \oplus \bC
\end{align*}
in such a way that if
\begin{align*}
r_1 &= (0 \oplus 1_\ell) \oplus (0 \oplus 1_\ell)  \oplus \cdots \oplus (0 \oplus 1_\ell)  \in \D_3, \\
r_2 &= ((0 \oplus 0) \oplus 1_{2\ell}) \oplus   \cdots \oplus ((0 \oplus 0) \oplus 1_{2\ell}) \in \D_4, \quad \text{ and} \\
r_3 &= (0 \oplus 1_\ell) \oplus 1 \in \D_5,
\end{align*}
then $r = r_1 + r_2 + r_3$, and in such a way that in $\D_3$
\begin{align*}
u_1^*au_1 &= (\alpha_1 \oplus 0_\ell) \oplus (\alpha_2\oplus 0_\ell)  \oplus \cdots \oplus (\alpha_n\oplus 0_\ell), \quad \text{ and} \\
q_1 &= (1 \oplus 0_\ell) \oplus (1\oplus 0_\ell)  \oplus \cdots \oplus (1\oplus 0_\ell) ,
\end{align*}
in $\D_4$
\begin{align*}
b &= ((\beta_1 \oplus 0) \oplus 0_{2\ell})  \oplus \cdots \oplus((\beta_m \oplus 0) \oplus 0_{2\ell}), \\
p &= ((1 \oplus 0) \oplus 0_{2\ell})  \oplus \cdots \oplus((1 \oplus 0) \oplus 0_{2\ell}), \quad \text{ and} \\
q_2 &= ((0 \oplus 1) \oplus 0_{2\ell}) \oplus   \cdots \oplus ((0 \oplus 1) \oplus 0_{2\ell}),
\end{align*}
and in $\D_5$
\[
q_0 = (1 \oplus 0_\ell) \oplus 0.
\]
Let $\D$ be the C$^*$-subalgebra of $\C$ generated by $\D_1$, $\D_3$, $\D_4$, and $\D_5$.  Hence, by construction, $1_\D = p + q + r$ and $p,q,r,a,b \in \D$.

Notice if $u = u_1 + q_2 + q_0 + r$, then $u \in \D$ is a unitary such that $u^*au = u_1^*au_1 \in \D$.  We claim that there exists a $c \in r \D_{\sa} r$ such that $\left\| c \right\| < \epsilon_2$ and 
\[
u^*au \in \conv(U_{\D}(b + q + c)).
\]
Note this will complete the proof since this clearly implies $a \in \conv(U_{\D}(b + q + c))$.

Since $u^*a u \in \D_3$, $b \in \D_4$, $q_1 \in \D_3$, $q_2 \in \D_4$, $q_0 \in \D_5$, since $q = q_1 + q_2 + q_0$, and since $\D_3$, $\D_4$, and $\D_5$ are pairwise orthogonal, it suffices to show that there exist self-adjoint $c_3 \in \D_3$, $c_4 \in \D_4$, and $c_5 \in \D_5$ with $\left\|c_j\right\| < \epsilon_2$
and $c_j \in r\D r$ for all $j \in \{3,4,5\}$ such that $u^*au \in \conv(U_{\D_3}(q_1 + c_3))$, $0 \in \conv(U_{\D_4}(b + q_2 + c_4))$, and $0 \in \conv(U_{\D_5}(q_0 + c_5))$.

First we will construct $c_3$.  To begin, we may assume without loss of generality that there is only one copy of $\M_{\ell+1}$ in $\D_3$ and that
\[
u^*a u = \alpha \oplus 0_\ell \qqand q_1 = 1 \oplus 0_\ell
\]
where $0 \leq \alpha \leq 1$ as $0 \leq a \leq 1_\C$.  Let
\[
c_3 = 0 \oplus   \left(- \frac{(1-\alpha)}{\ell}\right) 1_\ell.
\]
Clearly $c_3$ is self-adjoint and
\[
\left\|c_3\right\| = \left|\frac{(1-\alpha)}{\ell}\right| \leq \frac{1}{\ell} \leq \frac{\epsilon_2}{2}
\]
as $\ell = \left\lfloor \frac{3}{\epsilon_2} \right\rfloor$.  Notice that if $\tr_{\ell+1}$ is the normalized tracial state on $\M_{\ell+1}$, then
\[
\tr_{\ell+1}(q_1 + c_3) = \frac{1}{\ell+1} - \ell \frac{(1-\alpha)}{\ell(\ell+1)} = \frac{\alpha}{\ell+1} = \tr_{\ell+1}(u^*au).
\]
Hence, by applying Lemma \ref{lem:majorization-technical-trick-with-negatives} with $t_0 = \frac{1}{\ell}$, $f = \lambda^{\tr_{\ell+1}}_a$, and $g = \lambda^{\tr_{\ell+1}}_{q_1 + c_3}$, we see that $u^*a u \prec_{\tr_{\ell+1}} (q_1 + c_3)$.  Hence $u^*au \in \conv(U_{\D_3}(q_1 + c_3))$ by \cites{A1989, B1946}.

Next we will construct $c_4$.  To begin we may assume without loss of generality that there is only one copy of $\M_{2(\ell+1)}$ in $\D_4$ and
\[
b + q_2 = (\beta \oplus 1) \oplus 0_{2\ell}
\]
where $0 \leq \beta \leq 1$ as $0 \leq b \leq 1_\C$.  Let $\tr_{2(\ell+1)}$ denote the normalized tracial state on $\M_{2(\ell+1)}$.  Then
\[
\tr_{2(\ell+1)}(b + q_2) \in \left[\frac{1}{2(\ell+1)}, \frac{2}{2(\ell+1)}\right].
\]
Let
\[
\gamma = \frac{\ell+1}{\ell} \tr_{2(\ell+1)}(b + q_2) \in \left[\frac{1}{2\ell}, \frac{1}{\ell}\right] \subseteq \left[0, \frac{\epsilon_2}{2}\right].
\]
and let
\[
c_4 = (0\oplus 0) \oplus   \left(- \gamma\right) 1_{2\ell}.
\]
Clearly $c_4$ is self-adjoint  and
\[
\left\|c_4\right\| = |\gamma| \leq \frac{\epsilon_2}{2}.
\]
Furthermore
\[
\tr_{2(\ell+1)}(b + q_2 + c_4) = \tr_{2(\ell+1)}(b + q_2) - \gamma \frac{2\ell}{2(\ell+1)} = 0.
\]
Hence $0 \in \conv(U_{\D_4}(b + q_2 + c_4))$ by \cites{A1989, B1946}.

Since we may construct $c_5$ in a similar manner to $c_4$ so that $0 \in \conv(U_{\D_5}(q_0 + c_5))$, the proof is complete.
\end{proof}

In order to invoke both Lemma \ref{lem:NormPerturbation} and Lemma \ref{lem:strict-comparison-convex-hull-provided-room}, we will need to modify our operators to have a bit of kernel.  The following lemma provides the necessary modification.

\begin{lem}
\label{lem:Budweiser}
Let $\A$ be a unital C$^*$-algebra, let $\tau \in T(\A)$, and let $\{p_k\}^n_{k=1} \subseteq  \A$ be pairwise orthogonal non-zero projections such  that $\sum_{j=1}^n p_j = 1_\A$. Let $a = \sum_{j=1}^n \alpha_j p_j$ where $\alpha_1 > \alpha_2 >  \cdots > \alpha_n \geq 0$.

Suppose  $q_1$ is a subprojection of $p_1$ such that $\tau(q_1) < \tau(p_1)$, $\delta > 0$, and $\beta_1 > 0$ are such that 
\[
\alpha_1 + \delta > \beta_1 > \frac{\alpha_1 \tau(p_1)}{\tau(p_1) - 
\tau(q_1)} > \alpha_1.
\]
If $\tilde{a} = \beta_1 (p_1 - q_1) + \sum_{j=2}^n \alpha_j p_j$, then
\[
\int_0^t \lambda_{a}^{\tau}(s) \, ds 
\leq \int_0^t  \lambda_{\tilde{a}}^{\tau}(s) \, ds 
\leq \delta t + \int_0^t \lambda_{a}^{\tau}(s) \, ds.  
\]
for all $t \in [0,1]$.
\end{lem}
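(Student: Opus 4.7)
The plan is to compute both eigenvalue functions explicitly using Example \ref{exam:finite-spectrum-eigenvalue-functions} and then verify the two inequalities by a direct comparison. Set $r := \tau(p_1) - \tau(q_1) > 0$ and $s_k := \sum_{j=1}^k \tau(p_j)$. The example gives $\lambda_a^\tau(s) = \alpha_k$ on $[s_{k-1}, s_k)$, while the analogous computation for $\tilde a$ (whose distinct spectral values, in decreasing order, are $\beta_1 > \alpha_2 > \cdots > \alpha_n$, with a point mass of weight $\tau(q_1)$ at $0$) yields the closed formula
\[
\lambda_{\tilde a}^\tau(s) = \begin{cases} \beta_1 & \text{if } s \in [0, r), \\ \lambda_a^\tau(s + \tau(q_1)) & \text{if } s \in [r, 1 - \tau(q_1)), \\ 0 & \text{if } s \in [1 - \tau(q_1), 1). \end{cases}
\]
Since $1 - \tau(q_1) - r = 1 - \tau(p_1) \geq 0$, the middle interval is well-defined (possibly empty, in which case the middle formula is vacuous and the argument simplifies).

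I would then examine the difference $G(t) := \int_0^t [\lambda_{\tilde a}^\tau(s) - \lambda_a^\tau(s)]\,ds$. Its derivative equals $\beta_1 - \alpha_1 > 0$ on $[0, r)$, equals $\lambda_a^\tau(s + \tau(q_1)) - \lambda_a^\tau(s) \leq 0$ on $[r, 1 - \tau(q_1))$ by the non-increasing monotonicity in Theorem \ref{thm:eigenvalue-function-properties}, and equals $-\lambda_a^\tau(s) \leq 0$ on $[1 - \tau(q_1), 1)$ since $a \geq 0$. Hence $G$ rises linearly on $[0, r]$ and is non-increasing on $[r, 1]$, attaining its maximum at $t = r$ with value $G(r) = (\beta_1 - \alpha_1)\, r$.

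The two desired bounds now follow. For the upper inequality: on $[0, r]$ one has $G(t) = (\beta_1 - \alpha_1)\, t < \delta t$ because $\beta_1 - \alpha_1 < \delta$; on $[r, 1]$ one has $G(t) \leq G(r) < \delta r \leq \delta t$. For the lower inequality: on $[0, r]$, $G$ is non-negative since it increases from $G(0) = 0$; on $[r, 1]$ the non-increasing behavior forces $G(t) \geq G(1) = \tau(\tilde a) - \tau(a) = \beta_1 r - \alpha_1 \tau(p_1)$, which is strictly positive by the hypothesis $\beta_1 > \alpha_1 \tau(p_1)/r$. No step is a serious obstacle; the only care needed is the explicit bookkeeping of $\lambda_{\tilde a}^\tau$ and the clean identification of the two slope regimes of $G$.
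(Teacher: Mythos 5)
Your proof is correct and follows the same approach as the paper: both derive the explicit formula for $\lambda_{\tilde a}^\tau$ (your $\tau(q_1)$ is the paper's $\gamma - r$) and then verify the two integral inequalities directly. Your organization is slightly cleaner, since the observation that $G(t) = \int_0^t \bigl(\lambda_{\tilde a}^\tau(s) - \lambda_a^\tau(s)\bigr)\,ds$ rises on $[0,r]$ and falls on $[r,1]$ reduces everything to evaluating $G(r)$ and $G(1)$, whereas the paper works through several explicit sub-intervals of $[0,1]$ by hand for the upper bound.
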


\begin{proof}
Let $\gamma = \tau(p_1)$ and $r = \tau(p_1) - \tau(q_1) > 0$.  Note by the assumptions that
\[
\beta_1 r > \alpha_1 \gamma \qqand r \beta_1 < r(\alpha_1 + \delta).
\]

Using Example \ref{exam:finite-spectrum-eigenvalue-functions}, it is elementary to see that
\[
\lambda^\tau_{\tilde{a}}(s) = \begin{cases}
\beta_1 & \text{if }s < r \\
\lambda^\tau_a(s + \gamma - r) & \text{if } r \leq s < 1 - (\gamma - r)\\
0 & \text{if }1 - (\gamma - r) \leq s < 1
\end{cases}.
\]

To see that $\int_0^t \lambda_{a}^{\tau}(s)\,  ds  \leq \int_0^t  \lambda_{\tilde{a}}^{\tau}(s)\,  ds$ for all $t \in [0,1]$, first notice that if $t \in [0,r]$ then
\begin{align*}
\int_0^t  \lambda_{\tilde{a}}^{\tau}(s) \, ds = t \beta_1 \geq t \alpha_1 =  \int_0^t  \lambda_{a}^{\tau}(s) \, ds.
\end{align*}
Next, if $t \in [r, 1-(\gamma - r)]$ then
\begin{align*}
\int_0^t  \lambda_{\tilde{a}}^{\tau}(s)\,  ds &= r \beta_1 + \int^t_r \lambda^\tau_a(s + \gamma - r) \, ds \\
&\geq \alpha_1 \gamma + \int^{t+ \gamma - r}_\gamma \lambda^\tau_a(x) \, dx \\
&= \int^{t+ \gamma - r}_0 \lambda^\tau_a(x) \, dx \\
& \geq \int^{t}_0 \lambda^\tau_a(x) \, dx
\end{align*}
as $\lambda^\tau_a(x) \geq 0$ for all $x \in [0,1]$.  Finally, if $t \in [1-(\gamma - r), 1]$ then
\begin{align*}
\int_0^t  \lambda_{\tilde{a}}^{\tau}(s)\,  ds = \int_0^{1-(\gamma - r)}  \lambda_{\tilde{a}}^{\tau}(s) \, ds &= r \beta_1 + \int^{1-(\gamma - r)}_r \lambda^\tau_a(s + \gamma - r) \, ds \\
&\geq \alpha_1 \gamma + \int^{1}_\gamma \lambda^\tau_a(x) \, dx \\
&= \int^{1}_0 \lambda^\tau_a(x) \, dx \\
& \geq \int^{t}_0 \lambda^\tau_a(x) \, dx
\end{align*}
as $\lambda^\tau_a(x) \geq 0$ for all $x \in [0,1]$.  Hence the first inequality has been demonstrated.

To see that $\int_0^t  \lambda_{\tilde{a}}^{\tau}(s) \, ds \leq \delta t + \int_0^t \lambda_{a}^{\tau}(s) \, ds$ for all $t \in [0,1]$, first notice that if $t \in [0,r]$ then
\begin{align*}
\int_0^t  \lambda_{\tilde{a}}^{\tau}(s) \, ds = t \beta_1 \leq t (\alpha_1 + \delta) = \delta t + \int_0^t  \lambda_{a}^{\tau}(s) \, ds.
\end{align*}
Next, if $t \in [r, \gamma]$ then
\begin{align*}
\int_0^t \lambda_{\tilde{a}}^{\tau}(s) \,ds & =  \beta_1 r + \int_{r}^t  \lambda^{\tau}_{\tilde{a}}(s) \,ds \\
& \leq  (\alpha_1 + \delta) r + \alpha_1 (t - r) \\
& \leq  \alpha_1 t + \delta r \\
& \leq  \int_0^t \lambda^{\tau}_a (s) \, ds + \delta t
\end{align*} 
as $\lambda^{\tau}_{\tilde{a}}(s) < \alpha_1$ on $[r, \gamma]$.   Next, if $t \in [\gamma, 1-(\gamma - r)]$ (if said interval is non-empty) then
\begin{align*}
\int^t_0 \lambda^\tau_{\tilde{a}}(s) - \lambda^\tau_a(s) \, ds &= \beta_1 r + \int^{t+ \gamma - r}_\gamma \lambda^\tau_a(x) \, dx - \int^t_0 \lambda^\tau_a(s) \, ds \\
&= \beta_1 r + \int^{t+ \gamma - r}_t \lambda^\tau_a(x) \, dx - \int^\gamma_0 \lambda^\tau_a(s) \, ds \\
& \leq \beta_1 r + (\gamma - r)\alpha_1 - \int^\gamma_0 \lambda^\tau_a(s) \, ds  \\
& = \beta_1 r + (\gamma - r)\alpha_1 - \gamma \alpha_1  \\
&= \beta_1 r - \alpha_1 r < r\delta < t \delta.
\end{align*}
Finally, if $t \in [\max\{\gamma, 1-(\gamma - r)\}, 1]$ then, as $\lambda^\tau_{\tilde{a}}(s) = 0$ for all $s \in [1-(\gamma - r), 1]$, we have that 
\begin{align*}
\int^t_0 \lambda^\tau_{\tilde{a}}(s) \, ds &= \int^{1-(\gamma - r)}_0 \lambda^\tau_{\tilde{a}}(s) - \lambda^\tau_a(s) \, ds + \int^{1-(\gamma - r)}_0 \lambda^\tau_a(s) \, ds \\
& \leq (1-(\gamma - r)) \delta + \int^{1-(\gamma - r)}_0 \lambda^\tau_a(s) \, ds \\
& \leq t \delta + \int^t_0 \lambda^\tau_a(s) \, ds
\end{align*}
as $\lambda^\tau_a(s) \geq 0$ for all $s \in [0,1]$.  Hence the second inequality (and thus the lemma) has been demonstrated.
\end{proof}

Finally, we are in a position to combine the above ingredients to obtain our final technical result of this section.  This result says, under minor assumptions, that if $b$ almost majorizes $a$ in a TAF C$^*$-algebra, then $a$ is almost in $\conv(\U(b))$.   For technical purposes in the statement and proof of the result, for each $t > 0$ let $h_{t} : [0, \infty) \rightarrow [0,1]$ denote the continuous function defined by
\begin{equation}
h_{t}(x)  =
\begin{cases}
1 & x \in [0, \frac{2 t}{3}] \\
0 & x \in [t, \infty) \\
\makebox{linear on  } & [\frac{2 t}{3}, \epsilon]
\end{cases}.
\label{equ:h_epsilon}
\end{equation}

\begin{lem}
Let $\A$ be a unital, separable, simple, non-elementary, TAF C$^*$-algebra.  For every $\epsilon > 0$ and for every $\gamma \in (0,1)$ there exists a $\delta > 0$ such that if $a, b \in \A_+$ have finite spectrum, $0 \in \sp(b)$, $\left\| a \right\| \leq \left\| b \right\| = 1$,
\[
\int_0^t \lambda_a^{\tau}(s) \, ds \leq \delta t 
+ \int_0^t \lambda_b^{\tau} (s) \, ds
\qqand 
\int_0^1 \lambda_a^{\tau}(s) \, ds + \delta  > \int_0^1 
\lambda_b^{\tau} (s) \, ds
\]
for all $\tau \in T(\A)$ and $t \in [0,1]$, and
\[
\inf_{\tau \in T(\A)} \tau\left(h_{\frac{1}{1000}\epsilon}(b)\right) > \gamma,
\]
then 
\[
\dist(a, \conv(\U(b))) \leq \epsilon.
\]
\label{lem:TAFFiniteSpectrumCase}
\end{lem}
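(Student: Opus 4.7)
The strategy is to transfer the problem to a finite-dimensional subalgebra via the TAF structure, solve the matrix version using Lemma \ref{lem:NormPerturbation} together with the classical Alberti--Uhlmann/Birkhoff theorem, and absorb the complementary corner via Lemma \ref{lem:strict-comparison-convex-hull-provided-room}. Fix $\epsilon$ and $\gamma$ and choose $\delta>0$ small enough to feed all invoked lemmas. Using the construction preceding Lemma \ref{lem:AsymptoticArgument} together with Remark \ref{rem:TAFLargeDenominators}, pick $n$ large and a finite-dimensional $\D_n \subseteq \A$ with $\tau(1_\A-1_{\D_n})$ tiny, each simple summand of large rank, and approximates $a_n,b_n \in \D_n$ with $\sp(a_n)\subseteq \sp(a)$, $\sp(b_n)\subseteq \sp(b)$. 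The hypothesis $\inf_\tau \tau(h_{\epsilon/1000}(b)) > \gamma$ together with Lemma \ref{lem:size-of-kernel} (applied to $b$) and Lemma \ref{lem:AsymptoticArgument}(1) (applied to the continuous function $h_{\epsilon/500}$ composed with $b$) yields $\tau'(h_{\epsilon/500}(b_n)) > \gamma/2$ for every tracial state $\tau'$ coming from a simple summand $\D_{n,j}$.

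Next, apply Lemma \ref{lem:AsymptoticArgument}(2) to transfer the near-majorization hypotheses from $\A$ to each $\D_{n,j}$ with a slightly larger parameter $\delta'$. Truncate $b_{n,j}$ by functional calculus that zeros out the spectrum below $\epsilon/500$, obtaining $\tilde{b}_{n,j}$ with $\|\tilde{b}_{n,j}-b_{n,j}\| < \epsilon/500$, kernel trace at least $\gamma/2$, and the eigenvalue inequalities of Lemma \ref{lem:NormPerturbation} still satisfied (the slack built into hypothesis (6) of that lemma is exactly tailored for this truncation). In each simple summand, Lemma \ref{lem:NormPerturbation} applied with parameters $\epsilon$ and $r=\gamma/2$ produces a self-adjoint $B'_{n,j}\in\D_{n,j}$ with $\|B'_{n,j}-\tilde{b}_{n,j}\| < 2\epsilon$ and $a_{n,j} \prec_{\tr} B'_{n,j}$. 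The classical Alberti--Uhlmann/Birkhoff theorem \cites{A1989,B1946} then gives $a_{n,j} \in \conv(U_{\D_{n,j}}(B'_{n,j}))$; assembling across summands, there exist unitaries $v_1,\dots,v_N \in U(\D_n)$ such that $a_n = \frac{1}{N}\sum_i v_i^* B'_n v_i$ with $\|B'_n - b_n\| < 2\epsilon + \epsilon/500$.

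To handle the complementary corner, write $\hat{a} = (1_\A - 1_{\D_n})a(1_\A - 1_{\D_n})$ and $\hat{b} = (1_\A - 1_{\D_n})b(1_\A - 1_{\D_n})$; approximate each by a finite-spectrum positive element of the same corner (with negligible norm loss). Apply Lemma \ref{lem:strict-comparison-convex-hull-provided-room} with $p = 1_\A-1_{\D_n}$ (which has tiny trace), $q$ a subprojection of the spectral projection of $b_n$ on $[0,\epsilon/500]$ satisfying $2\tau(p) < \tau(q) < \epsilon_1$, and $r$ a larger subprojection of the same spectral region (available because $\tau(h_{\epsilon/500}(b_n)) > \gamma/2$ dominates $6\epsilon_1/\epsilon_2$ for appropriate $\epsilon_1,\epsilon_2$). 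This yields a subalgebra $\D'\subseteq \A$, unitaries $w_1,\dots,w_M \in U(\D')$, and a small $c$ with $\|c\|<\epsilon_2$ such that $\hat{a} = \frac{1}{M}\sum_k w_k^*(\hat{b}+q+c)w_k$. Crucially, because $q$ and $r$ are drawn from the near-kernel spectral projections of $b_n$, the element $q+c$ differs in norm from the corresponding piece of $b_n$ by at most $1+\epsilon_2+\epsilon/500$.

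Finally, synthesize: form unitaries in $\A$ by combining each $v_i$ (acting on the $\D_n$-part) with each $w_k$ (acting on $p+q+r$), producing $NM$ unitary conjugates of a single element which differs from $b$ in operator norm by $O(\epsilon)$. Taking the convex combination and tracking cross-terms (controlled by the almost-commutation of $1_{\D_n}$ with $a,b$ in property (\ref{property-almost-commute})) yields an element within $O(\epsilon)$ of $a$, so $\dist(a, \conv(\U(b))) \leq \epsilon$ after an initial rescaling of $\delta$. \emph{The main obstacle} is this final synthesis step: the projections $q,r$ produced by Lemma \ref{lem:strict-comparison-convex-hull-provided-room} must be chosen inside the block structure of $\D_n$ used for Lemma \ref{lem:NormPerturbation} so that the unitaries from the two steps commute (at least approximately) on their respective supports, and so that the element $\hat{b}+q+c$ is genuinely close to the full $b$ restricted to $p+q+r$ rather than to some modified version of it. Careful bookkeeping of all small errors -- $2\epsilon$ from Lemma \ref{lem:NormPerturbation}, $\epsilon/500$ from truncation, $\epsilon_2$ from $c$, and the TAF approximation errors -- determines the final choice of $\delta$.
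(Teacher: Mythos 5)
Your outline correctly identifies the major ingredients the paper uses: transfer to a large finite-dimensional subalgebra via the TAF machinery (Remark \ref{rem:TAFLargeDenominators}, Lemma \ref{lem:AsymptoticArgument}), Lemma \ref{lem:size-of-kernel} to preserve near-kernel mass, Lemma \ref{lem:NormPerturbation} plus \cites{A1989,B1946} in each matrix block, and Lemma \ref{lem:strict-comparison-convex-hull-provided-room} for the complementary corner. However, there is a genuine gap in how you glue the two convex combinations together, and you essentially flag it yourself without resolving it.

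The problem is the choice of the projection $q$ in Lemma \ref{lem:strict-comparison-convex-hull-provided-room}. You take $q$ inside the near-zero spectral projection of $b_n$, so that $q + c$ (eigenvalue $\approx 1$ on the range of $q$) differs in norm by about $1$ from the corresponding piece of $b_n$ (eigenvalue $\approx 0$ there). You acknowledge this ($\|q + c - (\text{piece of }b_n)\| \approx 1$), yet the synthesis step then asserts the combined element is within $O(\epsilon)$ of $b$; these two claims are incompatible. With your choice of $q$, the element whose unitary conjugates you are averaging is not close to $b$, so the argument does not produce $\dist(a, \conv(\U(b))) \leq \epsilon$.

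The paper resolves precisely this difficulty with Lemma \ref{lem:Budweiser}, which you do not invoke. After truncating with $g$, one writes $g(b_{1,j}) = p_j + c_j$ where $p_j$ is the spectral projection at eigenvalue $1$, then carves a small subprojection $q_j \leq p_j$ and replaces $g(b_{1,j})$ by $b'_{1,j} = \beta_1 (p_j - q_j) + c_j$ with $\beta_1 \approx 1$. Lemma \ref{lem:Budweiser} shows the integrals of eigenvalue functions change by at most $O(\delta_0)$, and by construction $e := \sum_j q_j$ satisfies $e + \sum_j b'_{1,j} \approx g(b_1) \approx b_1$ in norm. Feeding $e$ as the $q$ of Lemma \ref{lem:strict-comparison-convex-hull-provided-room} means that when that lemma hands $q$ back in the element $b_0 + e + d$, the sum $b_0 + e + d + \sum_j b'_{1,j}$ really is within $O(\epsilon)$ of $b$; moreover $\sum_j b'_{1,j}$ is automatically orthogonal to the subalgebra $\D$ produced there, so the two families of unitaries (from \ref{lem:NormPerturbation}/Birkhoff on $\C$, and from \ref{lem:strict-comparison-convex-hull-provided-room} on $p+e+r_3$) can be combined by direct sum without the approximate-commutation bookkeeping you worry about. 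In short: $q$ must come from the top of the spectrum via the Budweiser perturbation, not from the near-kernel; the near-kernel data is only used to supply enough room for $r$ and to meet the hypotheses of Lemma \ref{lem:NormPerturbation}.
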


\begin{proof}
Fix $\epsilon > 0$.  Without loss of generality, we may assume that $\epsilon < 1$.  Let $\delta_0 > 0$ be the resulting $\delta$ from Lemma \ref{lem:NormPerturbation} where $\gamma$ is used for $r$ and $\frac{1}{5} \epsilon$ is used for $\epsilon$.  Without loss of generality, we may assume that $\delta_0 < \frac{1}{10}\epsilon$.  Finally, let $\delta > 0$ be such that 
\[
\delta < \frac{1}{100}\delta_0.
\]
We claim that $\delta$ works for the pair $(\epsilon, \gamma)$.  

Fix $a, b \in \A_+$ satisfying the hypotheses for this $\delta$.  For simplicity, let $\delta_1 = \gamma$. 
Thus
\[
\delta_1 < 
\inf_{\tau \in T(\A)} \mu_{\tau, b}\left(\left[0, \frac{1}{1000}\epsilon\right)\right).
\]

By the assumptions on $b$, we may write $\sp(b) = \{\alpha_k\}^L_{k=1}$ where
\[
1 = \alpha_1 > \alpha_2 >  \cdots > \alpha_L = 0.
\]
Let $f : [0, \infty) \rightarrow [0,1]$ be the continuous function such that 
\[
f(s) 
=
\begin{cases}
1 & s \in [\frac{1 + \alpha_2}{2}, \infty) \\
0 & s \in [0, \alpha_2] \\
\makebox{ linear on } & [\alpha_2, \frac{1 + \alpha_2}{2}]
\end{cases}.
\]
Hence $f(b) \in \A$ is a non-zero projection. Therefore, there exists a $\delta_2 > 0$ such that 
\[
\delta_2 <  \inf_{\tau \in T(\A)} \tau(f(b)).
\]

Choose $\delta_3 > 0$ such that $\delta_3 < \delta_2$ and
\begin{equation}
1 + \frac{1}{100}\delta_0
> \frac{\delta_2}{\delta_2 - 
\delta_3} > 1
\label{equ:TheBeatingsWillContinue} 
\end{equation}
and let 
\[
\delta_4 = \left(\frac{\epsilon}{10000} 
\right)\min \{ \delta_1, \delta_2,
\delta_3 \}.
\]

By Remark \ref{rem:TAFLargeDenominators}, Lemma \ref{lem:AsymptoticArgument}, 
and Lemma \ref{lem:size-of-kernel},  
there exists a finite dimensional C$^*$-algebra $\C \subseteq \A$, $a_0, b_0 \in (1_{\A} - 1_{\C}) \A_+ (1_{\A} - 1_{\C})$, and
$a_1, b_1 \in \C_+$ such that:
\begin{enumerate}
\item $\tau(1_{\A} - 1_{\C}) < \delta_4$ for all $\tau \in 
T(\A)$, \label{SmallNontrivialCorner}  
\item $a_0, a_1, b_0, b_1 \geq 0$,
\item $\left\|a_0 \right\|, \left\| b_0 \right\|, \left\| a_1 \right\|, \left\| b_1 \right\| \leq 1$,  \label{assum:contractions}
\item $\sp(a_0 + a_1) \subseteq \sp(a)$ and 
$\sp(b_0 + b_1) \subseteq \sp(b)$ (so $a_0$, $a_1$, $b_0$, $b_1$ have finite spectrum), \label{spectralassumption}
\item every simple summand of $\C$ has rank greater than
$\frac{1}{\delta_4}$, \label{assum:rank}
\item $\left\| a_0 + a_1 - a \right\|, 
\left\| b_0 + b_1 - b \right\| < \frac{1}{100}\epsilon$,  
\item  $\int_0^t \lambda^{\tau}_{a_1}(s) \, ds 
\leq \frac{\delta_0}{100} t + 
\int_0^t \lambda^{\tau}_{b_1}(s) \, ds$ for all $\tau \in \partial_{\mathrm{ext}} T(\C)$ and $t \in [0,1]$,  \label{AlmostMaj1}   
\item $\int_0^1 \lambda^{\tau}_{a_1}(s) \, ds + \frac{\delta_0}{100}
>  \int_0^1 \lambda^{\tau}_{b_1}(s) \, ds$ for all $\tau \in \partial_{\mathrm{ext}} T(\C)$, \label{AlmostMaj2} 
\item
$\tau(f(b_1))
> \delta_2$  for all $\tau \in \partial_{\mathrm{ext}} T(\C)$, \label{Jan620163:40PM} 
\item 
$\mu_{\tau, b_0 + b_1}([0, \frac{1}{100}\epsilon)) 
\geq \frac{1}{2} \mu_{\tau, b}([0, \frac{1}{100}\epsilon)) 
> \frac{1}{2}\delta_1$ for all $\tau \in T(\A)$, and \label{EmptySpace}
\item 
$\tau\left(h_{\frac{1}{100}\epsilon}(b_1)\right) > \gamma = \delta_1$ for all $\tau \in \partial_{\mathrm{ext}} T(\C)$.   \label{assum:kerneltrace}
 \end{enumerate}

Let $j_0$ be the unique number such that 
$\alpha_{j_0} > \frac{1}{100}\epsilon$ and 
$\alpha_{j_0 +1 } \leq \frac{1}{100}\epsilon$.
Let $g : [0, \infty) \rightarrow [0,1]$ be the function
given by 
\[
g(s) = 
\begin{cases} 
0 & s \in [0, \frac{\alpha_{j_0} + \alpha_{j_0 +1}}{2}]\\
s & s \in [\frac{\alpha_{j_0} + \alpha_{j_0 +1}}{2}, \infty)
\end{cases} .
\]
Then 
by property (\ref{spectralassumption}), $g$ is continuous on $\sp(b_0) \cup \sp(b_1)$ and $\left\|g(b_j) -  b_j\right\| \leq \frac{1}{100}\epsilon$ for $j = 0, 1$.

Since $b_0$ and $b_1$ have finite spectrum, the range projections $r_0$ and $r_1$ of $g(b_0)$ and $g(b_1)$ respectively must be elements of $\A$.  By property (\ref{EmptySpace}),
\[
\tau(1 - (r_0 + r_1)) > \frac{1}{2}\delta_1
\]
for all $\tau \in T(\A)$.  
Since $r_0 \leq 1_{\A} - 1_{\C}$ and $r_1 \leq 1_{\C}$, property (\ref{SmallNontrivialCorner}) implies that
\[
\tau(1_{\A} - 1_{\C}) < \delta_4 < \frac{1}{100}\delta_1
\]
for all $\tau \in T(\A)$.
Let $r_3 = 1_{\C} - r_1$.  Therefore, as $r_3$ is orthogonal to $g(b_1) + (1_{\A} - 1_{\C})$, we obtain that
\[
\frac{\delta_1}{2} < \tau(r_3 + (1_\A - 1_\C - r_0)) < \tau(r_3) + \frac{1}{100}\delta_1
\]
for all $\tau \in T(\A)$, we obtain that
\begin{equation}
\tau(r_3) > \frac{49}{100} \delta_1 \geq \frac{4900\delta_4}{\epsilon}
\label{equ:BeautifulSpace}
\end{equation}
for all $\tau \in T(\A)$.

From properties (\ref{AlmostMaj1}), (\ref{AlmostMaj2}), the definition of
$g$, and 
part (\ref{EFOrder}) of Theorem \ref{thm:EFunctionProperties},  
for all $\tau \in \partial_{\mathrm{ext}} T(\C)$ we have that 
\begin{equation}
\int_0^t \lambda^{\tau}_{a_1}(s) \, ds \leq \frac{\delta_0}{100}t +
\int_0^t \lambda^{\tau}_{g(b_1)}(s) \, ds
\label{equ:AlmostMaj3}
\end{equation}
for all $t \in [0,  \tau(r_1)]$, 
\begin{equation}
\int_0^t \lambda^{\tau}_{a_1}(s) \, ds < \frac{\delta_0}{100}t + 
\frac{\epsilon}{100} (t - \tau(r_1)) +  
\int_0^t \lambda^{\tau}_{g(b_1)}(s) \, ds
\label{equ:AlmostMaj3.1}
\end{equation}
for all $t \in [\tau(r_1), 1]$, and 
\begin{equation}
\int_0^1 \lambda^{\tau}_{a_1}(s) \, ds + \frac{\delta_0}{100} > 
\int_0^1 \lambda^{\tau}_{g(b_1)}(s) \, ds. 
\label{equ:AlmostMaj4}  
\end{equation}

Write $\C = \C_1 \oplus \C_2 \oplus \cdots \oplus \C_M$ 
where each $\C_j$ is a full matrix algebra with rank 
$m_j \geq \frac{1}{\delta_4}$ by property (\ref{assum:rank}).   Furthermore, write $b_1 = \sum^M_{j=1} b_{1,j}$ and
$a_1 =  \sum^M_{j=1} a_{1,j}$ where $a_{1,j}, b_{1,j} \in \C_k$ for all $1 \leq j \leq M$ and let $\tau_j$ be the unique tracial state on 
$\C_j$.   
Hence, by (\ref{equ:AlmostMaj3}), 
(\ref{equ:AlmostMaj3.1}) and (\ref{equ:AlmostMaj4}), we have 
for all $1 \leq j \leq M$ that 
\begin{equation}
\int_0^t \lambda_{a_{1,j}}^{\tau_j}(s) \, ds 
\leq \frac{\delta_0}{100}t 
+ \int_0^t \lambda_{g(b_{1,j})}^{\tau_j}(s) \, ds  
\label{equ:AlmostMaj5}
\end{equation}
for all $t \in [0, \tau_j(r_1)]$,
\begin{equation}
\int_0^t \lambda_{a_{1,j}}^{\tau_j}(s) \, ds 
\leq \frac{\delta_0}{100}t 
+ \frac{\epsilon}{100}(t - \tau_j(r_1)) 
+
\int_0^t \lambda_{g(b_{1,j})}^{\tau_j}(s) \, ds  
\label{equ:AlmostMaj5.1}
\end{equation}
for all $t \in [\tau_j(r_1), 1]$, and 
\begin{equation}
\int_0^1 \lambda_{a_{1,j}}^{\tau_j}(s) \, ds + \frac{\delta_0}{100}   
>  \int_0^1 \lambda_{g(b_{1,j})}^{\tau_j}(s) \, ds.   
\label{equ:AlmostMaj6}
\end{equation}
Furthermore $\tau_j(f(b_{1,j})) > \delta_2$ for all $1 \leq j \leq M$ by property (\ref{Jan620163:40PM}), which, along with property (\ref{spectralassumption}), implies that $1 \in \sp(b_{1,j})$ (and hence $\left\| b_{1,j} \right\| = 1$).  By the definition of $g$, we obtain that  $\tau_j(f(g(b_{1,j}))) > \delta_2$, $1 \in \sp(g(b_{1,j}))$, and $\| g(b_{1,j}) \| = 1$.   Therefore, for each $1 \leq j \leq M$ there exist $p_j, c_j \in \C_+$ such that 
\begin{enumerate}[i.]
\item $p_j$ is a non-zero projection,
\item  $\left\| c_j \right\| < 1$, 
\item  $p_j $ is orthogonal to $c_j$, 
\item $\sp(c_j) \subseteq \sp(g(b_{1,j})) \setminus \{ 1 \} \subseteq \sp(b)  \setminus \{ 1 \}$,
\item $\tau_j(p_j) > \delta_2$, and  
\item  $g(b_{1,j}) = p_j + c_j$.
\end{enumerate}

Since $\text{rank}(\C_j) = m_j \geq \frac{1}{\delta_4}$, we see that
$\frac{1}{m_j} \leq \delta_4 \leq \frac{\epsilon \delta_2}{100}$. 
Hence there exists a projection $q_j \in \C_j$ such that $q_j \leq p_j$ and
\begin{equation}
3 \delta_4 < \tau_j(q_j) < 4 \delta_4 < \delta_3. 
\label{equ:AnheuserBusch}
\end{equation} 

Since $\tau_j(p_j) > \delta_2 > \delta_3 >  \tau_j(q_j)$ for all $1 \leq j \leq M$, we have that $\delta_2 \tau_j(q_j) < \delta_3 \tau_j(p_j)$.  
Hence
\[
\frac{\tau_j(p_j)}{\tau_j(p_j) - \tau_j(q_j)} < 
\frac{\delta_2}{\delta_2 - \delta_3}
\]
for all $1 \leq j \leq M$.  Therefore, by (\ref{equ:TheBeatingsWillContinue}), we obtain for all $1 \leq j \leq M$ that
\[
1 + \frac{\delta_0}{100} > 
\frac{\delta_2}{\delta_2 - \delta_3}
> \frac{\tau_j(p_j)}{\tau_j(p_j) - \tau_j(q_j)}
> 1.
\]
Hence there exists a $\beta_1 \in \bR$ be such that
\begin{equation}
1 + \frac{\delta_0}{100} >
\beta_1 >
\frac{\tau_j(p_j)}{\tau_j(p_j) - \tau_j(q_j)}
> 1
\label{equ:MoraleHadBetterImprove}
\end{equation}
for all $1 \leq j \leq M$.

For every $1 \leq j \leq M$, let 
\[
b'_{1,j} = \beta_1 (p_j - q_j) + c_j = g(b_{1,j}) - p_j + \beta_1 (p_j - q_j) \in  (\C_j)_+.
\]
Notice by (\ref{equ:MoraleHadBetterImprove}) 
and Lemma \ref{lem:Budweiser} that
\[
\int_0^t \lambda_{g(b_{1,j})}^{\tau_j}(s) \, ds 
\leq \int_0^t \lambda_{b'_{1,j}}^{\tau_j}(s) \, ds 
\leq \frac{\delta_0}{100}t + \int_0^t \lambda_{g(b_{1,j})}^{\tau_j}(s) \, ds
\]
for all $t \in [0,1]$.  Therefore, combining with (\ref{equ:AlmostMaj5}), 
(\ref{equ:AlmostMaj5.1}) and (\ref{equ:AlmostMaj6}), we obtain
for all $1 \leq j \leq M$ that 
\begin{equation}
\int_0^t \lambda^{\tau_j}_{a_{1,j}} (s) \, ds \leq \frac{\delta_0}{100}t
+ \int_0^t \lambda^{\tau_j}_{b'_{1,j}} (s) \, ds
\label{equ:AlmostMaj7}
\end{equation}
 for all $t \in [0, \tau_j(r_1)]$,
\begin{equation}
\int_0^t \lambda^{\tau_j}_{a_{1,j}} (s) \, ds \leq \frac{\delta_0}{100}t
+ \frac{\epsilon}{100} (t - \tau_j(r_1)) + 
\int_0^t \lambda^{\tau_j}_{b'_{1,j}} (s) \, ds
\label{equ:AlmostMaj7.1}
\end{equation}
for all $t \in [\tau_j(r_1), 1]$, and
\begin{equation}
\int_0^1 \lambda^{\tau_j}_{a_{1,j}} (s) \, ds + \frac{\delta_0}{50}
> \int_0^1 \lambda^{\tau_j}_{b'_{1,j}} (s) \, ds.
\label{equ:AlmostMaj8}
\end{equation}

Let $e \in \C$ be the projection given by  
\[
e = \sum_{j=1}^M q_j.
\]
Since $g(b_1) = e + \sum_{j=1}^M (p_j - q_j + c_j)$,
\begin{align*}
\left\| g(b_1) - \left(e + \sum_{j=1}^M b'_{1,j} \right) \right\| &= \left\| 
\sum^M_{j=1} g(b_{1,j})  - \left(e + \sum_{j=1}^M b'_{1,j}\right) \right\| \\
&= \left\|\sum^M_{j=1} p_j - \sum_{j=1}^M q_j - \sum^M_{j=1} \beta_1(p_j - q_j)\right\|   \\
&= |\beta_1 - 1| < \frac{\delta_0}{100}.
\end{align*}
Hence
\begin{equation}
\left\| b - \left(b_0 + e + \sum_{j=1}^M b'_{1,j} \right) \right\| < \frac{(\delta_0 + \epsilon)}{10}.
\label{equ:KingOfBeers}
\end{equation}

By (\ref{equ:AnheuserBusch}),
\[
3 \delta_4 <  \tau_j(q_j) < 4 \delta_4
\]
for all $1 \leq j \leq M$ which implies that 
\[
3 \delta_4 \tau(1_{\C_j}) < \tau(q_j) < 4 \delta_4 \tau(1_{\C_j})
\]
for all $\tau \in T(\A)$ and $1 \leq j \leq M$ (since $\tau(1_{\C_j}) \neq 0$ as $\A$ is simple so $\frac{1}{\tau(1_{\C_j})} \tau$ is the unique tracial state on $\C_j$). Hence
\begin{equation}
3 \delta_4 \tau(1_{\C}) < \tau(e) < 4 \delta_4 \tau(1_{\C}) = 4 \delta_4
\label{equ:Jan620167:30PM}
\end{equation}
for all $\tau \in T(\A)$.  Furthermore, we see that
\begin{equation}
\label{equ:Jan620167:45PM}
\tau(e) > 3 \delta_4(1 - \delta_4) > 2 \delta_4 > 2 \tau(1_{\A} - 1_{\C})
\end{equation}
for all $\tau \in T(\A)$.

By (\ref{equ:BeautifulSpace}), (\ref{equ:Jan620167:30PM}), and (\ref{equ:Jan620167:45PM}), we may invoke Lemma \ref{lem:strict-comparison-convex-hull-provided-room} (with $a_0$ for $a$, $b_0$ for $b$, $1_\A - 1_\C$ for $p$, $e$ for $q$, $4\delta_4$ for $\epsilon_1$, $r_3$ for $r$, and $\frac{1}{100}\epsilon$ for $\epsilon_2$) to obtain a C$^*$-subalgebra
$\D \subseteq \A$ and a $d \in r_3 \D_{\sa} r_3$  such that 
\begin{enumerate}[(a)]
\item  $1_{\D} = (1_{\A} - 1_{\C}) + e + r_3$,
\item $(1_{\A} - 1_{\C}), e, r_3, a_0, b_0, d \in \D$,
\item $\left\| d \right\| < \frac{1}{100}\epsilon$, and
\item $a_0 \in \conv(\U_{\D}(b_0 + e + d))$. 
\end{enumerate} 

Notice that $b'_{1,j} \in \C$ so $b'_{1,j}$ is orthogonal to $1_\A - 1_\C$.  Furthermore, $b'_{1,j}$ is orthogonal to $e$ by construction and $r_3$ is orthogonal to $g(b_1)$.  Hence $b'_{1,j}$ is orthogonal to $\D$ for all $j$.  Consequently, we obtain that
\[
a_0 + \sum_{j=1}^M b'_{1,j} \in \conv\left(\U_{\A}\left(b_0 + e + d + 
\sum_{j=1}^M b'_{1,j}\right)\right).
\] 

Since $\left\| d \right\| < \frac{1}{100}\epsilon$, (\ref{equ:KingOfBeers}) implies that
\[
\left\|b - \left(b_0 + e + d +  \sum_{j=1}^M b'_{1,j} \right)\right\| \leq  \frac{1}{100}\epsilon+ \frac{\delta_0 + \epsilon}{5}.
\]
Therefore
\[
\dist\left(a_0 + \sum_{j=1}^M b'_{1,j}, \conv(\U(b))\right) \leq  \frac{\delta_0}{5} 
+ \frac{21 \epsilon}{100}.
\]

For each $1 \leq j \leq M$, notice that if $P_{\ker(b'_{1,j})} \in \C_j$ is the projection
onto the kernel of $b'_{1,j}$ in $\C_j$, then property  (\ref{assum:kerneltrace}) and the definitions of
$g$ and $b'_{1,j}$ imply that
\[
\tau_j(P_{\ker(b'_{1,j})}) > \gamma = \delta_1.
\]
Notice 
\[
1 - \tau_j(P_{\ker(b'_{1,j})}) \leq 1 - \tau_j(P_{\ker(g(b_{1,j}))}) = \tau_j(r_1).
\]
Therefore equations (\ref{equ:AlmostMaj7}), (\ref{equ:AlmostMaj7.1}),
(\ref{equ:AlmostMaj8}) hold with $1 - \tau_j(P_{\ker(b'_{1,j})})$ replacing $\tau_j(r_1)$.  Hence, by the definition  
of $\delta_0$, imply we may invoke Lemma \ref{lem:NormPerturbation} (with $\gamma$ for $r$, $\frac{1}{5} \epsilon$ for $\epsilon$, $a_{1,j}$ for $A$, and $b'_{1,j}$ for $B$ which produces $\delta_0$) to obtain that 
\[
\dist\left(a_1, \conv\left(U_{\C}\left(\sum_{j=1}^M b'_{1,j}\right)\right)  \right)\leq \frac{\epsilon}{5}.
\]
Hence, as $a_0$ is orthogonal to $\C$, we obtain that
\[
\dist\left(a_0 + a_1, \conv(\U(b)) \right) \leq \frac{\delta_0}{5} + \frac{41 \epsilon}{100}.
\]
Therefore 
\[
\dist(a, \conv(\U(b))) \leq \epsilon. \qedhere
\]
\end{proof}

\section{The Main Result}

In this section, we will complete the proof of our main result (Theorem \ref{thm:main}) via Lemma \ref{lem:TAFFiniteSpectrumCase}.  All that remains in the proof is to show that the hypotheses of Lemma \ref{lem:TAFFiniteSpectrumCase} can be obtained, and to invoke some classification results.  We begin with the following.
\begin{lem}
Let $\C$ be a unital, separable, simple, non-elementary
C$^*$-algebra with $T(\C) \neq \emptyset$. If $a, b \in \C_{\sa}$ are such that $a \prec_{\tau} b$ for a $\tau \in T(\C)$, then
\[
\sp(a) \subseteq \conv(\sp(b)).
\]
\label{lem:MajorizationSpectrum}
\end{lem}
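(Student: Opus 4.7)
The plan is to show the two inclusions $\sup\sp(a) \leq \sup\sp(b)$ and $\inf\sp(a) \geq \inf\sp(b)$, which together give $\sp(a) \subseteq [\inf\sp(b), \sup\sp(b)] = \conv(\sp(b))$ since $\sp(a)$ is a compact subset of $\bR$.

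The crucial preliminary observation is that every tracial state on a unital simple C$^*$-algebra is automatically faithful. Indeed, $I := \{x \in \C \, \mid \, \tau(x^*x) = 0\}$ is a closed two-sided ideal of $\C$ that does not contain $1_\C$; since $\C$ is simple, $I = \{0\}$, so $\tau$ is faithful. Consequently, part (\ref{EFspectrallimits}) of Theorem \ref{thm:EFunctionProperties} applies with equalities, yielding
\[
\sup\sp(a) = \lim_{s \searrow 0} \lambda^\tau_a(s) = \lambda^\tau_a(0), \qquad \inf\sp(a) = \lim_{s \nearrow 1} \lambda^\tau_a(s),
\]
and similarly for $b$ (using right-continuity from part (1) for the first equality).

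For the upper bound, I would divide the first majorization inequality $\int_0^t \lambda^\tau_a(s)\, ds \leq \int_0^t \lambda^\tau_b(s)\, ds$ by $t$ and let $t \searrow 0$. By right-continuity of the eigenvalue functions at $0$, both sides converge respectively to $\lambda^\tau_a(0)$ and $\lambda^\tau_b(0)$, giving $\sup\sp(a) \leq \sup\sp(b)$.

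For the lower bound, combine the two conditions in Definition \ref{defn:majorization} to obtain $\int_t^1 \lambda^\tau_a(s)\, ds \geq \int_t^1 \lambda^\tau_b(s)\, ds$ for every $t \in [0,1]$. Setting $m_a := \lim_{s \nearrow 1} \lambda^\tau_a(s) = \inf\sp(a)$ and $m_b := \inf\sp(b)$, fix $\epsilon > 0$ and choose $t_0 < 1$ such that $\lambda^\tau_a(s) \leq m_a + \epsilon$ on $[t_0, 1)$; since $\lambda^\tau_b$ is non-increasing with limit $m_b$, it satisfies $\lambda^\tau_b(s) \geq m_b$ on $[0,1)$. Dividing the reversed inequality on $[t_0,1]$ by $(1-t_0)$ yields $m_a + \epsilon \geq m_b$, so letting $\epsilon \searrow 0$ gives $\inf\sp(a) \geq \inf\sp(b)$. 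The result then follows immediately. No step is especially delicate — the entire content is the translation between endpoint values of $\lambda^\tau_a$ and extremes of $\sp(a)$, which is exactly what faithfulness of $\tau$ buys us.
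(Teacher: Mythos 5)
Your proof is correct and takes essentially the same route as the paper: both rely on the observation that tracial states on unital simple C$^*$-algebras are faithful, translate the endpoints of the eigenvalue functions into $\inf\sp$ and $\sup\sp$ via part (2) of Theorem \ref{thm:EFunctionProperties}, and extract the two endpoint inequalities from the defining integral inequalities of $\prec_\tau$. You simply spell out the limiting arguments (dividing by $t$ and averaging near $s=1$) that the paper leaves implicit.
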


\begin{proof}
Let $\tau \in T(\A)$ be arbitrary.  Since $a \prec_\tau b$, the defining inequalities in Definition \ref{defn:majorization} imply that
\[
\lim_{s \nearrow 1} \lambda^\tau_a(s) \geq \lim_{s \nearrow 1} \lambda^\tau_b(s) \qqand \lim_{s \searrow 0} \lambda^\tau_a(s) \leq \lim_{s \searrow 0} \lambda^\tau_b(s).
\]  
Furthermore, since $\A$ is simple, $\tau$ must be faithful.  Hence the result follows from  part (\ref{EFspectrallimits}) of Theorem \ref{thm:EFunctionProperties}.
\end{proof}

\begin{lem}
Let $\A$ be a unital, separable, simple, non-elementary, TAF C$^*$-algebra.  If $a, b \in \A_{\sa}$ are such that $a \prec_{\tau} b$ for all $\tau \in T(\A)$, then $a \in \cconv(\U(b))$.
\label{lem:TAFAll}
\end{lem}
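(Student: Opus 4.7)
The plan is to reduce the general case to an application of Lemma \ref{lem:TAFFiniteSpectrumCase} via two successive reductions: first to positive operators with unit norm and zero in the spectrum, and then to finite-spectrum approximates. Using part \ref{EFTranslation} of Theorem \ref{thm:EFunctionProperties} together with the identity $\U(a - \alpha 1_\A) = \U(a) - \alpha 1_\A$, both majorization and the closed convex hull of the unitary orbit are preserved under translation, so I would first subtract $(\inf\sp(b))1_\A$ from both operators to assume $b \in \A_+$ and $0 \in \sp(b)$. Since $\A$ is simple, every tracial state is faithful, so Lemma \ref{lem:MajorizationSpectrum} forces $a \in \A_+$. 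If $b = 0$ then $\sp(a) \subseteq \{0\}$ gives $a = 0$ and the result is trivial; otherwise scaling by $1/\|b\|$ (using part \ref{EFDilation}) reduces to $\|b\| = 1$ and $\|a\| \leq 1$.

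Fix $\epsilon > 0$. Because $0 \in \sp(b)$ and every $\tau \in T(\A)$ is faithful, weak$^*$-compactness of $T(\A)$ gives
\[
\gamma_0 := \inf_{\tau \in T(\A)} \tau\bigl(h_{\epsilon/3000}(b)\bigr) > 0.
\]
Let $\delta_0 > 0$ be the constant supplied by Lemma \ref{lem:TAFFiniteSpectrumCase} for the pair $(\epsilon/3, \gamma_0/2)$, and choose $\delta > 0$ sufficiently small (in particular $\delta < \min\{\delta_0/3, \epsilon/6\}$) so that any $b' \in \A_+$ with $\|b' - b\| < \delta$ still satisfies $\inf_\tau \tau(h_{\epsilon/3000}(b')) > \gamma_0/2$; this uses continuity of functional calculus and compactness of $T(\A)$, essentially as in Lemma \ref{lem:size-of-kernel}. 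Since $\A$ is TAF and hence has real rank zero, the self-adjoint elements with finite spectrum are norm-dense, and by refining the spectral partition to include $0$ and $1$ as endpoints one can choose $a_0, b_0 \in \A_+$ with finite spectrum satisfying $\|a - a_0\|, \|b - b_0\| < \delta$, $\|b_0\| = 1$, $0 \in \sp(b_0)$, and $\|a_0\| \leq 1$.

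The norm bound on eigenvalue functions (part \ref{EFNorm} of Theorem \ref{thm:EFunctionProperties}) combined with $a \prec_\tau b$ then gives, for all $t \in [0,1]$ and $\tau \in T(\A)$,
\[
\int_0^t \lambda_{a_0}^\tau(s)\,ds \leq \int_0^t \lambda_a^\tau(s)\,ds + \delta t \leq \int_0^t \lambda_b^\tau(s)\,ds + \delta t \leq \int_0^t \lambda_{b_0}^\tau(s)\,ds + 2\delta t,
\]
and the analogous estimate at $t = 1$ combined with the equality $\int_0^1 \lambda_a^\tau = \int_0^1 \lambda_b^\tau$ yields the strict inequality $\int_0^1 \lambda_{a_0}^\tau(s)\,ds + 2\delta > \int_0^1 \lambda_{b_0}^\tau(s)\,ds$. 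With $2\delta \leq \delta_0$ and the kernel condition on $b_0$ secured, Lemma \ref{lem:TAFFiniteSpectrumCase} produces a convex combination $x = \sum_i \lambda_i u_i^* b_0 u_i \in \conv(\U(b_0))$ with $\|a_0 - x\| \leq \epsilon/3$. Replacing $b_0$ by $b$ in this combination yields $y := \sum_i \lambda_i u_i^* b u_i \in \conv(\U(b))$ with $\|y - x\| < \delta$, so $\|a - y\| < \epsilon$. Since $\epsilon > 0$ was arbitrary, $a \in \cconv(\U(b))$.

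The main obstacle is that Lemma \ref{lem:TAFFiniteSpectrumCase} imposes several quantitative hypotheses — finite spectrum, normalization, a kernel condition, and strict almost-majorization — which must be arranged simultaneously for the finite-spectrum approximates. The strict almost-majorization inequalities fall out cleanly from the Lipschitz-type estimate on eigenvalue functions, and the kernel condition on $b_0$ transfers from $b$ because both have $0$ in their spectrum and functional calculus is continuous; thus the only real delicacy is the bookkeeping in choosing the constants $(\gamma_0, \delta_0, \delta)$ in the correct order.
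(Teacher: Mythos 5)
Your proof is correct and takes essentially the same route as the paper: translate and scale so that $a,b \in \A_+$ with $0 \in \sp(b)$ and $\|a\| \leq \|b\| = 1$, extract a lower bound $\gamma$ on $\inf_\tau \tau(h_t(b))$ from simplicity, obtain $\delta$ from Lemma \ref{lem:TAFFiniteSpectrumCase}, pass to finite-spectrum approximates $a_0,b_0$ via real rank zero, and transfer the majorization inequalities through the Lipschitz estimate on eigenvalue functions. The only differences from the paper's argument are cosmetic choices of numerical constants.
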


\begin{proof}
By parts (\ref{EFDilation}) and (\ref{EFTranslation}) of Theorem \ref{thm:EFunctionProperties} and by Lemma \ref{lem:MajorizationSpectrum}, we may replace $a, b$ with $\gamma_1 (a + \gamma_2 1_\A)$, $\gamma_1 (b + \gamma_2 1_\A)$ for appropriate $\gamma_1, \gamma_2 \in \mathbb{R}$ with $\gamma_1 > 0$ if necessary to assume that $a, b \geq 0$, $0 \in \sp(b)$, and $\left\| a \right\| \leq \left\| b \right\| = 1$.

Let $\epsilon > 0$ be arbitrary and let $h_{\epsilon}$ be the function in (\ref{equ:h_epsilon}). Notice $h_{\epsilon}(b) \neq 0$ as $0 \in \sp(b)$.
Therefore, as $\A$ is simple, there exists a $\gamma \in (0, 1)$ such that 
$$\gamma < \inf_{\tau \in T(\A)} \tau(h_{\frac{1}{1000}\epsilon}(b)).$$ 
Furthermore, choose $\alpha > 0$ so that 
$$\gamma < \inf_{\tau \in T(\A)} \tau(h_{\epsilon}(b)) - \alpha.$$
Let $\delta > 0$ be the $\delta$ produced in the conclusions of Lemma \ref{lem:TAFFiniteSpectrumCase} using $\frac{1}{10}\epsilon$ for $\epsilon$ and  $\gamma$ for $\gamma$.   

Since $\A$ has real rank zero, there exist $a', b' \in \A_+$ such that 
\begin{itemize}
\item $a', b'$ have finite spectrum,
\item $\left\| a' - a \right\| , \left\| b' - b \right\| < \min \{ \frac{1}{10}\epsilon, \frac{1}{10}\delta \}$,
\item $0 \in \sp(b')$,
\item $\| h_{\frac{1}{1000}\epsilon}(b') - h_{\frac{1}{1000}\epsilon}(b) \| < \frac{1}{3}\alpha$, and
\item $\left\| a' \right\| \leq \left\| b' \right\| = 1$.
\end{itemize}
Therefore, by part (\ref{EFNorm}) of Theorem \ref{thm:EFunctionProperties}, the hypothesis $a \prec_{\tau} b$ for all $\tau \in T(\A)$ implies that
\[
\int_0^t \lambda_{a'}^{\tau}(s) \, ds \leq \frac{\delta}{5}t +
\int_0^t \lambda_{b'}^{\tau}(s) \, ds
\qand
\int_0^1 \lambda_{a'}^{\tau}(s) \, ds  + \frac{\delta}{5} > 
\int_0^1 \lambda_{b'}^{\tau}(s) \, ds
\]
for all $\tau \in T(\A)$ and $t \in [0,1]$.  Furthermore
$$\inf_{\tau \in T(\A)} \tau(h_{\frac{1}{1000}\epsilon}(b')) > \gamma.$$
Hence, as $\delta$ was chosen via Lemma \ref{lem:TAFFiniteSpectrumCase}, we obtain that 
\[
\dist(a', \conv(\U(b'))) \leq \frac{1}{10}\epsilon
\]
and thus
\[
\dist(a, \conv(\U(b))) \leq \epsilon.
\]
Therefore, as $\epsilon$ was arbitrary, the result follows
\end{proof}

\begin{thm}
\label{thm:main}
Let $\A$ be a unital, separable, simple, non-elementary, C$^*$-algebra with non-empty tracial simplex, real rank zero, 
strict comparison of projections, and cancellation of projections.  If $a, b \in \A_{\sa}$, then $a \prec_{\tau} b$ for all $\tau \in T(\A)$ if and only if $a \in \cconv(\U(b))$.
\end{thm}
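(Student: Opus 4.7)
The forward implication is soft. For each $\tau \in T(\A)$ and $u \in \U(\A)$, traciality gives $\tau((u^*bu)^n) = \tau(b^n)$ for every $n \in \bN$, so $\mu_{\tau, u^*bu} = \mu_{\tau, b}$ and hence $\lambda^\tau_{u^*bu} = \lambda^\tau_b$; in particular $u^*bu \prec_\tau b$. Lemma \ref{lem:IntLambdaChar} realizes $\int_0^t \lambda^\tau_x(s)\,ds$ as the supremum over the convex set $\{c \in \A : 0 \leq c \leq 1_\A,\ \tau(c) = t\}$ of the functionals $x \mapsto \tau(xc)$, each of which is affine in $x$, and part (\ref{EFNorm}) of Theorem \ref{thm:EFunctionProperties} makes it $1$-Lipschitz; likewise $\int_0^1 \lambda^\tau_x(s)\,ds = \tau(x)$ is affine in $x$. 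Thus $\{x \in \A_\sa : x \prec_\tau b\}$ is norm-closed and convex for each $\tau$, and intersecting over $\tau$ yields $\cconv(\U(b)) \subseteq \{x \in \A_\sa : x \prec_\tau b\ \text{for all}\ \tau \in T(\A)\}$.

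For the converse the plan is to reduce to the TAF case already handled by Lemma \ref{lem:TAFAll}. Invoking the embedding fact highlighted in the introduction (a consequence of Lin's work for simple real rank zero C$^*$-algebras with cancellation and strict comparison), one produces a unital C$^*$-subalgebra $\B \subseteq \A$ that is itself unital, separable, simple, non-elementary, and TAF, contains $a$ and $b$, and has the property that every $\sigma \in T(\B)$ is the restriction of some $\tau \in T(\A)$. Given such a $\B$, fix $\sigma \in T(\B)$ and choose $\tau \in T(\A)$ with $\tau|_\B = \sigma$; since $a, b \in \B$ one has $\sigma(a^n) = \tau(a^n)$ and $\sigma(b^n) = \tau(b^n)$ for every $n$, hence $\lambda^\sigma_a = \lambda^\tau_a$ and $\lambda^\sigma_b = \lambda^\tau_b$, and the hypothesis $a \prec_\tau b$ transfers to $a \prec_\sigma b$ inside $\B$. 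Applying Lemma \ref{lem:TAFAll} to $\B$ then gives $a \in \overline{\conv(\U_\B(b))}$, and the inclusion $\U_\B(b) \subseteq \U(b)$ finishes the argument.

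The principal obstacle is producing $\B$ with the surjective trace restriction property. Under the hypotheses on $\A$, the subalgebra $\B$ can be built as the norm closure of an increasing union of finite-dimensional subalgebras that successively absorb a countable dense $^*$-subalgebra containing $\{a,b\}$: real rank zero supplies the needed projections and spectral approximations, while cancellation together with strict comparison by tracial states of $\A$ provide the Murray--von Neumann subequivalences required by the dimension-drop condition in Definition \ref{defn:TAFAlgebra}. The delicate point is arranging the construction so that no spurious traces appear on $\B$; this amounts to controlling the ordered $K_0$ of $\B$ faithfully inside $\A$, and by the structure theory of simple unital TAF algebras (whose trace simplex is determined by ordered $K_0$ together with its pairing with traces) this in turn forces $T(\A) \to T(\B)$ to be surjective. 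Once $\B$ is in hand, the remainder of the argument is the formal transfer outlined above.
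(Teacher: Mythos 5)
Your forward implication is correct and is a nice self-contained alternative to the paper's citation of \cite{S2015}*{Lemma 2.20}: by Lemma \ref{lem:IntLambdaChar} each functional $x \mapsto \int_0^t \lambda^\tau_x(s)\,ds$ is a supremum of affine functionals $x \mapsto \tau(xc)$, hence convex, and part (\ref{EFNorm}) of Theorem \ref{thm:EFunctionProperties} makes it $1$-Lipschitz; together with $\int_0^1 \lambda^\tau_x = \tau(x)$ this exhibits $\{x \in \A_\sa : x \prec_\tau b\}$ as closed and convex, and it contains $\U(b)$.

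The converse is where your proposal diverges from the paper, and where it has a genuine gap. You want to build a TAF subalgebra $\B \subseteq \A$ that simultaneously (i) contains $a$ and $b$, (ii) is unital, separable, simple, non-elementary and TAF, and (iii) has the property that every $\sigma \in T(\B)$ restricts from some $\tau \in T(\A)$. You flag (iii) as the delicate point, but the proposed resolution --- absorbing a dense $^*$-subalgebra containing $\{a,b\}$ into an increasing union of finite-dimensional subalgebras while ``controlling the ordered $K_0$'' --- is not a construction. There is no reason such a limit algebra will be simple, let alone TAF with the required trace restriction property; in fact simplicity already fails for arbitrary such inductive limits, and the relation between $T(\B)$ and $T(\A)$ is exactly the thing one has no handle on when building $\B$ by ad hoc approximation. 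The paper circumvents this entirely by not placing $a$ and $b$ in the TAF subalgebra: \cite{H2001}*{Theorem 4.5} furnishes a unital subalgebra $\C \subseteq \A$ that is a simple AH algebra of real rank zero and bounded dimension growth (hence TAF) whose inclusion induces an isomorphism of the ordered, scaled $K$-theory; then \cite{LinRange}*{Lemma 5.1} and \cite{HuLin}*{Theorem 5.6} give that every self-adjoint of $\A$ is approximately unitarily equivalent to a self-adjoint of $\C$. One then transports both the hypothesis $a \prec_\tau b$ (eigenvalue functions are invariants of the a.u.e.\ class) and the conclusion $a \in \cconv(\U(b))$ (which is likewise a.u.e.-invariant) to representatives $a', b' \in \C$, and applies Lemma \ref{lem:TAFAll} there. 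The $K$-theory isomorphism, not a hands-on construction, is what gives the needed compatibility between $T(\C)$ and $T(\A)$. To repair your argument you would either have to supply a real construction verifying (i)--(iii), or, more realistically, adopt the paper's route of a $K$-theoretically faithful TAF subalgebra together with approximate unitary equivalence, which removes the (much harder) demand that $a,b$ lie in the subalgebra.
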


\begin{proof}
If $a \in \cconv(\U(b))$ then $a \prec_{\tau} b$ for all $\tau \in T(\A)$ by \cite{S2015}*{Lemma 2.20} (the assumption that $\tau$ must be faithful is not required by the same argument as in Theorem \ref{thm:EFunctionProperties}).

Suppose that $a \prec_{\tau} b$ for all $\tau \in T(\A)$.  By \cite{H2001}*{Theorem 4.5},
there exists a unital C$^*$-subalgebra
$\C \subseteq \A$ such that 
$\C$ is a simple, non-elementary, AH algebra with bounded 
dimension growth and real rank zero,
and if $i : \C \rightarrow \A$ is the inclusion map, then 
$i$ induces an isomorphism  of the K-theory invariants
\[
(K_0(\C), K_0(\C)_+, [1_{\C}], K_1(\C)) \cong
(K_0(\A), K_0(\A)_+, [1_{\A}], K_1(\A)).
\] 
(Note in Lin's theorem and argument, the hypothesis of nuclearity is not necessary.)

Hence  \cite{LinRange}*{Lemma 5.1} and \cite{HuLin}*{Theorem 5.6} imply that every positive element of $\A$ is approximately unitarily equivalent
to a positive element of $\C$. Hence, every self-adjoint element of
$\A$ is approximately unitarily equivalent to a self-adjoint element
of $\C$.

The result then follows from Lemma \ref{lem:TAFAll} and from the fact that
$\C$ is TAF (see \cite{H2000}*{Proposition 2.6} for example). 
\end{proof}

\section{A Bound for the Number of Unitaries Conjugates}

To complete this paper, we desire a bound for the number of unitary conjugates of a self-adjoint operator needed in a convex combination to approximate another self-adjoint operator for the C$^*$-algebras studied in Theorem \ref{thm:main}.  As all of the convex combinations constructed are via matrix algebras, it suffices to bound the number of unitary conjugates in a convex combination for matrix algebras independently of the size of the matrix algebras.   Consequently, the following result will enable a result in the context of Theorem \ref{thm:main}.

\begin{thm}
\label{thm:bounding-number-in-approx-convex-combination}
Let $n \in \bN$.  Then there exists an $f(n) \in \bN$ such that for all $m \in \bN$ and positive contractions $A,B \in \M_m$ with $A \prec_{\tr} B$, there exist unitaries $U_1, \ldots, U_{f(n)} \in \M_m$ and $t_1, \ldots t_{f(n)} \in [0,1]$ such that $\sum^{f(n)}_{k=1} t_k = 1$ and
\[
\left\| A - \sum^{f(n)}_{k=1} t_k U^*_k B U_k\right\| < \frac{4}{n-1} + \frac{5}{n}.
\]
In particular
\[
f(n) = 2^{\frac{1}{6} (n^3 - n)}.
\]
\end{thm}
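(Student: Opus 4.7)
My plan is to reduce the estimate to a purely matricial/combinatorial problem about majorization among matrices with a bounded number of distinct eigenvalues. First, given positive contractions $A, B \in \M_m$ with $A \prec_{\tr} B$, I would approximate them by positive contractions $A', B' \in \M_m$ whose spectra are contained in the $n$-point grid $\Gamma_n := \{k/(n-1) : 0 \leq k \leq n-1\}$, by applying $x \mapsto \lfloor (n-1)x\rfloor/(n-1)$ via functional calculus; this yields $\|A-A'\|, \|B-B'\| \leq 1/(n-1)$. By part (\ref{EFNorm}) of Theorem \ref{thm:EFunctionProperties} the majorization $A \prec_{\tr} B$ then degrades by at most $O(1/n)$ when passing to $(A',B')$, and I would use a perturbation in the spirit of Lemma \ref{lem:NormPerturbation} to replace $B'$ by some $B'' \in \M_m$ with $\|B'' - B'\| = O(1/n)$ and $A' \prec_{\tr} B''$ holding \emph{exactly}.

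The heart of the argument is a separate matricial lemma: whenever $C, D \in \M_m$ are self-adjoint with spectra contained in a fixed $n$-point set and $C \prec_{\tr} D$, then $C$ is an \emph{exact} convex combination of at most $2^{\binom{n+1}{3}} = 2^{(n^3-n)/6}$ unitary conjugates of $D$, with the bound independent of $m$. I would prove this by strong induction on $n$. The pair $(C, D)$ is combinatorially determined by two multiplicity vectors of length $n$ summing to $m$; by the Hardy-Littlewood-P\'olya-Birkhoff characterization, the eigenvalue sequence of $C$ is obtained from that of $D$ via a doubly-stochastic matrix, which thanks to the discrete spectrum respects an $n$-level block structure. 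Peeling off the topmost spectral value via at most $\binom{n}{2}$ grid-preserving T-transforms (each a $2$-term unitary convex combination, one for each pair of spectral levels) reduces the problem to one with $n-1$ distinct values. The resulting recursion $f(n) = 2^{\binom{n}{2}} f(n-1)$ with $f(1) = 1$ solves, by the hockey-stick identity $\sum_{k=2}^{n} \binom{k}{2} = \binom{n+1}{3}$, to $f(n) = 2^{(n^3-n)/6}$.

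The principal obstacle is obtaining an $m$-\emph{independent} bound in the matricial lemma: classical presentations of matrix majorization via T-transforms would require on the order of $m$ of them, not $\binom{n}{2}$. The key observation is that T-transforms between a fixed pair of spectral levels $(i/(n-1), j/(n-1))$ can be performed in parallel across all eigenvalue positions using a \emph{single} unitary conjugation that respects the coarse $n$-level block decomposition of $\M_m$; the cost per level pair is thus just one $2$-unitary convex combination rather than one per eigenvalue. Combining the discretization error ($\leq 1/(n-1)$ for each of $A$ and $B$, contributing $2/(n-1)$ by the triangle inequality), the perturbation error from replacing $B'$ by $B''$ ($\leq 2/n$), and the unitary-invariant triangle inequality against the exact convex combination produced by the matricial lemma, one obtains a total norm error of at most $\tfrac{4}{n-1} + \tfrac{5}{n}$, as required.
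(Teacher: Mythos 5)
Your overall blueprint matches the paper's: round $A$, $B$ to a small fixed grid, obtain \emph{exact} majorization there, then peel off spectral levels one at a time via grid-preserving T-transforms, running independent transforms in parallel through direct sums so each sequential step costs only a factor of $2$; the recursion plus the hockey-stick identity then yield $2^{(n^3-n)/6}$. So the architecture is right.

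However, there is a genuine gap in the reduction to your matricial lemma. You discretize both $A$ and $B$ to $\Gamma_n$ and then invoke a perturbation \emph{in the spirit of Lemma~\ref{lem:NormPerturbation}} to restore $A' \prec_\tr B''$ exactly. But Lemma~\ref{lem:NormPerturbation} repairs majorization by inserting small positive and negative shifts tuned to balance traces and partial integrals; the resulting $B''$ will generically \emph{not} have spectrum in $\Gamma_n$, whereas your matricial lemma needs $C$ and $D$ to share a single $n$-point spectral set so that the peeling recursion terminates after $n$ stages with an $m$-independent unitary count. Re-discretizing $B''$ may break the majorization again, and nothing in the proposal shows this iteration converges. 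The paper avoids the circularity by engineering the discretizations so that exact majorization is \emph{never} lost: Lemma~\ref{lem:fractional-trace} adds the same scalar to $A$ and $B$ so the trace lands on the grid (preserving $A' \prec_\tr B'$); Lemma~\ref{lem:fractional-lower-spectra} rounds $A$ up while adding the \emph{identical} diagonal shift to $B$ (again preserving $A' \prec_\tr B'$ and forcing $\sp(A')$ onto the grid); and only then does Lemma~\ref{lem:upper-fractional-spectra} round $B$, doing so by spreading eigenvalue pairs apart via Lemma~\ref{lem:perturb-up-and-down-for-majorization}, which guarantees $B' \prec_\tr B''$ and hence $A' \prec_\tr B''$ automatically. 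You need a one-way, majorization-increasing rounding for $B$ of this kind rather than a post-hoc repair. A smaller issue: your count of $\binom{n}{2}$ T-transforms for the first stage, ``one per pair of spectral levels,'' misattributes where the exponent comes from. In the paper's worst case the T-transforms in a stage nearly all pair the top level against lower levels (only $n-1$ distinct level pairs); the $\binom{n}{2}$ is the total \emph{sequential chain length} of raising a low eigenvalue one grid step at a time, consuming a fresh top-level slot per step, and the parallelism-by-direct-sums is justified via those chains, not by enumerating level pairs. The final numerical bound survives, but the bookkeeping should be tied to chain lengths for the argument to stand on its own.
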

The proof of Theorem \ref{thm:bounding-number-in-approx-convex-combination} will be developed through a finite number of perturbations in order to make certain assumptions on the spectrum and unnormalized traces of $A$ and $B$, followed by a recursive argument.  Before the first perturbation, we recall a notion of majorization for real $n$-tuples that will be of use notationally.
\begin{defn}
Let $\alpha = (\alpha_1, \ldots, \alpha_m), \beta = (\beta_1,\ldots, \beta_m) \in \bR^m$ and let $(\alpha'_1, \ldots, \alpha'_m)$ and $(\beta'_1, \ldots, \beta'_m)$ be the non-increasing rearrangements of $\alpha$ and $\beta$ respectively; that is, $(\alpha'_1, \ldots, \alpha'_m)$ is obtained by rearranging the terms of $(\alpha_1, \ldots, \alpha_m)$ so that
\[
\alpha'_1 \geq \alpha'_2 \geq \cdots \geq \alpha'_m.
\]
It is said that  $\beta$ majorizes $\alpha$, denoted $\alpha \prec \beta$, if
\begin{enumerate}
\item $\sum^l_{k=1} \alpha'_k \leq \sum^l_{k=1} \beta'_k$ for all $l \in \{1, \ldots, m\}$, and
\item $\sum^m_{k=1} \alpha'_k \leq \sum^m_{k=1} \beta'_k$.
\end{enumerate}
\end{defn}
It is elementary to see that if $A, B \in \M_n$ are self-adjoint, then $A \prec_\tr B$ if and only if the eigenvalue list of $B$ majorizes the eigenvalue list of $A$.

\begin{lem}
\label{lem:fractional-trace}
Let $A,B \in \M_m$ be positive contractions with $A \prec_{\tr} B$.  Then 
for all $n \geq 2$ there exist positive contractions  $A',B' \in \M_m$ such that
 
\begin{enumerate}
\item $A' \prec_{\tr} B'$,
\item $\Tr(A') = \Tr(B') \in \left\{\left. \frac{k}{n-1} \, \right| \, k \in \bN \cup \{0\}\right\}$,
\item $\left\|A - A'\right\| < \frac{2}{n-1}$, and 
\item $\left\|B - B'\right\| < \frac{2}{n-1}$.
\end{enumerate}
\end{lem}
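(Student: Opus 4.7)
The plan is to rely on a single clever observation. Set $T := \Tr(A) = \Tr(B)$, $T' := \lfloor (n-1) T \rfloor / (n-1)$, and $\delta := T - T' \in [0, 1/(n-1))$, so condition~(2) of the lemma holds automatically. If $\delta = 0$, take $A' = A$ and $B' = B$; otherwise proceed as follows. Since $\M_m$ is a factor and $A \prec_\tr B$, Theorem~\ref{thm:majorization-in-factors} together with \cites{A1989, B1946} supplies a \emph{finite} convex decomposition
\[
A = \sum_{k=1}^N t_k U_k^* B U_k, \qquad U_k \in \U(\M_m),\ t_k > 0,\ \sum_{k=1}^N t_k = 1.
\]

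The key step is to choose a positive element $X$ in the C$^*$-algebra generated by $B$ and $1_{\M_m}$ with $0 \leq X \leq B$, $\Tr(X) = \delta$, and $\|X\| \leq \delta$. Using the spectral decomposition $B = \sum_i \beta_i p_i$, I would take $X = \sum_i x_i p_i$ with $x_i \in [0, \min(\beta_i, \delta)]$ and $\sum_i x_i = \delta$; this is possible because $\sum_i \min(\beta_i, \delta) \geq \delta$ (either $\|B\| \geq \delta$, so a single term contributes $\delta$, or $\|B\| < \delta$, so the sum equals $\Tr(B) = T \geq \delta$, where $T \geq \delta$ follows from $T' \geq 0$).

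Now set $B' := B - X$ and $A' := A - \sum_{k=1}^N t_k U_k^* X U_k$. Both are positive contractions: $0 \leq B' \leq B \leq 1_{\M_m}$ is direct, while $0 \leq \sum_k t_k U_k^* X U_k \leq \sum_k t_k U_k^* B U_k = A \leq 1_{\M_m}$ yields $0 \leq A' \leq A \leq 1_{\M_m}$. Their traces both equal $T - \delta = T'$. The norm bounds $\|B - B'\| = \|X\| \leq \delta$ and $\|A - A'\| \leq \sum_k t_k \|X\| = \|X\| \leq \delta$ are both strictly below $1/(n-1) \leq 2/(n-1)$. Finally, by linearity,
\[
A' = \sum_{k=1}^N t_k U_k^* (B - X) U_k = \sum_{k=1}^N t_k U_k^* B' U_k \in \conv(\U(B')),
\]
so $A' \prec_\tr B'$ by Theorem~\ref{thm:majorization-in-factors}. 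There is no serious obstacle; the one point needing verification is the existence of $X$ (which reduces to $T \geq \delta$), and the crux of the argument is simply the trick of transporting a perturbation of $B$ into a compatible perturbation of $A$ through the pre-existing convex decomposition.
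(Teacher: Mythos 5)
Your proof is correct, and the bound you achieve is actually sharper than the one stated: you get $\|A-A'\|, \|B-B'\| \leq \delta < \tfrac{1}{n-1}$, whereas the lemma only demands $< \tfrac{2}{n-1}$. But your route is genuinely different from the paper's. The paper scales both operators down by the factor $\tfrac{n-2}{n-1}$ to open room near the top of the unit interval, then adds a common scalar multiple $rI$ (with $0 \le r < \tfrac{1}{n-1}$) to bring the common trace up to the next point of the grid $\left\{\tfrac{k}{n-1}\right\}$; this preserves $\prec_\tr$ trivially and is entirely self-contained. You instead round the trace \emph{down} to the grid and realize the downward perturbation by subtracting a positive $X \le B$ with $\Tr(X)=\delta$, $\|X\|\le\delta$ (built in the spectral decomposition of $B$, with the feasibility check $\sum_i \min(\beta_i,\delta) \geq \delta$ handled as you do), and then transport that perturbation to $A$ through a finite Birkhoff--Ando decomposition $A = \sum_k t_k U_k^* B U_k$, setting $A' = A - \sum_k t_k U_k^* X U_k$, so that $A' = \sum_k t_k U_k^* B' U_k$ and the majorization $A' \prec_\tr B'$ is automatic. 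The paper's argument buys simplicity and elementarity; yours buys a tighter estimate at the cost of invoking the finite convex-hull characterization of matrix majorization. Both are valid, and both deliver item (2) in the form $\Tr(A')=\Tr(B') \in \left\{\tfrac{k}{n-1}\right\}$.
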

\begin{proof}
First notice that $A_0 = \frac{n-2}{n-1}A$ and $B_0 = \frac{n-2}{n-1} B$ are positive contractions with $0 \leq A_0, B_0 \leq \frac{n-2}{n-1} I_m$ and $A_0 \prec_\tr B_0$.  Furthermore,
\[
\left\|A - A_0\right\| \leq \frac{1}{n-1} \qqand \left\|B - B_0\right\| \leq \frac{1}{n-1}.
\]

Since $A_0 \prec_\tr B_0$, we know $\Tr(A_0) = \Tr(B_0)$.  Let
$t \in [0, \infty)$ such that $m t = \Tr(A_0)$.  Then there exists a $k \in \bN \cup \{0\}$ such that $\frac{k}{n-1} \geq t$ and $\left|\frac{k}{n-1} - t\right| < \frac{1}{n-1}$.  Let $r = \frac{k}{n-1} - t$.  It is then elementary to verify that 
\[
A' = A_0 + r I_m \qqand B' = B_0 + r I_m
\]
are positive contractions that satisfy the requirements.
\end{proof}

The following result will enable us to control the spectrum of $A$.

\begin{lem}
\label{lem:fractional-lower-spectra}
Let $A, B \in \M_m$ be positive contractions such that $A \prec_\tr B$ and
\[
\Tr(A) = \Tr(B) \in  \left\{\left.\frac{k}{n-1} \, \right| \, k \in \bN \cup \{0\}\right\}.
\]
Then there exist positive contractions $A' , B' \in \M_m$ such that 
\begin{enumerate}
\item $A' \prec_\tr B'$, 
\item $\Tr(A') = \Tr(B') \in  \left\{\left.\frac{k}{n} \, \right| \, k \in \bN \cup \{0\}\right\}$,
\item $\left\|A - A'\right\| \leq \frac{2}{n}$, $\left\|B - B'\right\| \leq \frac{2}{n}$, and
\item $\sp(A') \subseteq \left\{\left.\frac{k}{n} \, \right| \, k \in\{0, 1, \ldots, n\}\right\}.$
\end{enumerate}
\end{lem}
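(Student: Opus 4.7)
The strategy is to scale both $A$ and $B$ to create operator-norm slack, and then round the scaled $A$ to have spectrum in the $1/n$-lattice. Set
\[
A_0 := \tfrac{n-1}{n}A, \qquad B_0 := \tfrac{n-1}{n}B.
\]
These are positive contractions with $\sp(A_0), \sp(B_0) \subseteq [0,(n-1)/n]$ and $\|A - A_0\|, \|B - B_0\| \leq 1/n$. Since scaling by a positive scalar preserves the partial-sum inequalities that define $\prec_{\tr}$, one has $A_0 \prec_{\tr} B_0$, and moreover
\[
\Tr(A_0) = \Tr(B_0) = \tfrac{n-1}{n}\cdot\tfrac{k}{n-1} = \tfrac{k}{n} \in \left\{\tfrac{j}{n} : j \in \bN \cup \{0\}\right\}.
\]

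I would then construct $A'$ by rounding each eigenvalue $(\alpha_0)_j$ of $A_0$ (taken in non-increasing order) to one of its two nearest multiples of $1/n$; i.e., $\alpha'_j \in \{L_j,\, L_j + 1/n\}$ where $L_j := \lfloor n(\alpha_0)_j\rfloor/n$. Because $(\alpha_0)_j \in [0,(n-1)/n]$, both candidates lie in $\{0,1/n,\ldots,n/n\}$, so automatically $\sp(A') \subseteq \{j/n : j \in \{0,1,\ldots,n\}\}$ and $\|A_0 - A'\| \leq 1/n$. The round-ups would be chosen so that: (i) within each equivalence class of positions sharing a common value of $L_j$, the round-ups form a prefix of the class (so $A'$ remains non-increasing); and (ii) the total number of round-ups equals $q := k - \sum_j \lfloor n(\alpha_0)_j\rfloor$, which is a non-negative integer strictly less than the number of positions with non-integer $n(\alpha_0)_j$, thereby guaranteeing $\Tr(A') = k/n$. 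Such an assignment exists, and the triangle inequality then yields $\|A - A'\| \leq 2/n$.

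Finally, the natural candidate is $B' := B_0$, which already satisfies $\Tr(B') = k/n = \Tr(A')$ and $\|B - B'\| \leq 1/n$. What remains is to verify $A' \prec_{\tr} B'$, equivalently $\sum_{j=1}^{\ell}\alpha'_j \leq \sum_{j=1}^{\ell}(\beta_0)_j$ for every $\ell$. Since $A_0 \prec_{\tr} B_0$ with slack $\gamma_\ell := \sum_{j=1}^\ell\bigl((\beta_0)_j - (\alpha_0)_j\bigr) \geq 0$, and since rounding shifts each partial sum by at most $1/n$ per prefix position, this holds provided the round-ups in $A_0$ are placed at positions lying in the lowest-$L_j$ classes first, where $\sum_{j=1}^\ell\alpha'_j$ naturally stays below $\sum_{j=1}^\ell(\alpha_0)_j$ for small $\ell$. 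When the slack $\gamma_\ell$ is too small to absorb the induced rounding error at some prefix, I would instead round a corresponding set of eigenvalues of $B_0$ upward, producing $B' \neq B_0$ with $\|B_0 - B'\| \leq 1/n$ and hence $\|B - B'\| \leq 2/n$, while restoring the partial-sum inequality. The main obstacle I anticipate is precisely this combinatorial coordination between the $A_0$- and $B_0$-round-up patterns to simultaneously preserve non-increasing order, trace equality in $\{j/n : j \in \bN \cup \{0\}\}$, and the partial-sum inequality at every prefix; I expect this to be handled by an explicit greedy class-by-class construction, in the spirit of the scale-and-shift argument used in Lemma~\ref{lem:fractional-trace}.
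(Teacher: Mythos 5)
Your opening move---scaling both matrices by $\frac{n-1}{n}$---matches the paper exactly and correctly creates the headroom needed; up to that point everything is fine. The divergence, and the genuine gap, is in what you do next: you round the eigenvalues of $A_0$ \emph{some up, some down} to preserve the trace, then try $B' = B_0$, then propose to patch $B_0$ upward at prefixes where the slack $\gamma_\ell$ is exhausted. You flag this "combinatorial coordination" yourself as an obstacle, and it really is one: rounding some eigenvalues of $B_0$ up strictly increases $\Tr(B')$ beyond $k/n$, so you would then have to round others down to compensate, which threatens positivity, the non-increasing order, \emph{and} the prefix inequalities at other positions, all simultaneously. You have not shown this greedy patching can actually be made consistent, and the "spirit of Lemma~\ref{lem:fractional-trace}" (a uniform scalar shift) does not help here because the required corrections are eigenvalue-dependent.

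The paper's idea, which you are missing, is that there is no need for balanced rounding of $A_0$ at all. Round \emph{every} eigenvalue $\alpha_k$ of $A_0$ \emph{upward} to the nearest multiple of $1/n$, writing the (nonnegative) increment as $\alpha'_k$, with $0 \le \alpha'_k \le 1/n$, and then add the \emph{same} increment $\alpha'_k$ to the $k$-th eigenvalue $\beta_k$ of $B_0$. Thus
\[
A' = \diag(\alpha_1 + \alpha'_1, \ldots, \alpha_m + \alpha'_m), \qquad B' = \diag(\beta_1 + \alpha'_1, \ldots, \beta_m + \alpha'_m).
\]
All the needed properties then fall out at once: contractivity of $A'$ and $B'$ follows from $A_0, B_0 \le \frac{n-1}{n}I_m$ and $\alpha'_k \le \frac{1}{n}$; $\|A-A'\|, \|B-B'\| \le 2/n$ by the triangle inequality; $\sp(A')$ lands in the $1/n$-lattice by construction; $\Tr(A') = \Tr(B')$ because the same total is added to both; and $\Tr(A')$ is a multiple of $1/n$ because $\sp(A')$ is. Crucially, $A' \prec_{\tr} B'$ is immediate: for every $\ell$,
\[
\sum_{k=1}^{\ell}(\alpha_k + \alpha'_k) \;\le\; \sum_{k=1}^{\ell}(\beta_k + \alpha'_k)
\]
reduces to the known inequality $\sum_{k=1}^{\ell}\alpha_k \le \sum_{k=1}^{\ell}\beta_k$, and the right-hand side is in turn dominated by the $\ell$ largest eigenvalues of $B'$. (Note that $\beta_k + \alpha'_k$ need not be non-increasing; that is fine, because any partial sum is dominated by the sorted partial sum of the same length.) In short: the missing idea is to perturb $B_0$ \emph{in lockstep} with $A_0$ rather than independently, which dissolves the coordination problem you identified.
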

\begin{proof}
Let
\[
A_0 = \frac{n-1}{n} A \qqand B_0 = \frac{n-1}{n} B.
\]
Note $A_0 \prec_\tr B_0$, $0 \leq A_0, B_0 \leq \frac{n-1}{n} I_m$, $\left\|A - A_0\right\| \leq \frac{1}{n}$, $\left\|B - B_0\right\| \leq \frac{1}{n}$, and
\[
\Tr(A_0) = \Tr(B_0) \in  \left\{\left.\frac{k}{n} \, \right| \, k \in \bN \cup \{0\}\right\}.
\]

Without loss of generality, we may assume that
\[
A_0 = \diag(\alpha_1, \alpha_2,\ldots, \alpha_m) \qqand B_0 = \diag(\beta_1, \beta_2, \ldots, \beta_m)
\]
where $\alpha_1 \geq \alpha_2 \geq \cdots \geq \alpha_m$ and $\beta_1 \geq \beta_2 \geq \cdots \geq \beta_m$.  For each $k \in \{1,\ldots, m\}$, let
\[
\alpha'_k = \min\left\{ \frac{r}{n} - \alpha_k \, \left| \, r \in \{0, 1,\ldots, n-1\}, \frac{r}{n} \geq \alpha_k \right.\right\}.
\]
Hence $0 \leq \alpha'_k \leq \frac{1}{n}$ for all $k$.

Let
\begin{align*}
A' &= \diag(\alpha_1 + \alpha'_1, \alpha_2 + \alpha'_2, \ldots, \alpha_m + \alpha'_m) \quad \text{and} \\
B' &= \diag(\beta_1 + \alpha'_1, \beta_2 + \alpha'_2, \ldots, \beta_m + \alpha'_m).
\end{align*}
Hence $\left\|A - A'\right\| \leq \frac{2}{n}$ and $\left\|B - B'\right\| \leq \frac{2}{n}$.  Furthermore, since  $0 \leq A_0, B_0 \leq \frac{n-1}{n} I_m$, we see that $A'$ and $B'$ are positive contractions.  Since $\Tr(A) = \Tr(B)$, we clearly have $\Tr(A') = \Tr(B')$.  Clearly
\[
\sp(A') \subseteq \left\{\left.\frac{k}{n} \, \right| \, k \in\{0, 1, \ldots, n\}\right\}
\]
by construction and thus 
\[
\Tr(B') = \Tr(A')  \in  \left\{\left.\frac{k}{n} \, \right| \, k \in \bN \cup \{0\}\right\}.
\]
Finally, clearly $\alpha_k + \alpha'_k \geq \alpha_{k+1} + \alpha'_{k+1}$ for all $k$ and thus the simple inequalities
\[
\sum^l_{k=1} \alpha_k + \alpha'_k \leq \sum^l_{k=1} \beta_k + \alpha'_k
\]
show that $A' \prec_\tr B'$.
\end{proof}

To control the spectrum of $B$, we will first need the following.

\begin{lem}
\label{lem:perturb-up-and-down-for-majorization}
Let $B = \diag(\beta_1, \ldots, \beta_m) \in \M_m$ be a self-adjoint diagonal.  
Let $j_1, j_2$ be two indices such that $j_1 \neq j_2$ and $\beta_{j_1} \geq \beta_{j_2}$.  Furthermore, let $r > 0$, $\beta'_j = \beta_j$ if $j \neq j_1, j_2$, $\beta'_{j_1} = \beta_{j_1} + r$, and $\beta'_{j_2} = \beta_{j_2} - r$.  Finally let
\[
B' = \diag(\beta'_1, \ldots, \beta'_m).
\]
Then $B \prec_\tr B'$.
\end{lem}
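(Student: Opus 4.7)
The plan is to reduce the lemma to a vector majorization statement and then verify that statement via the variational characterization of partial sums of a non-increasing rearrangement. Since $B$ and $B'$ are diagonal, $B \prec_\tr B'$ is equivalent to the tuple $(\beta_1,\ldots,\beta_m)$ being majorized by $(\beta_1',\ldots,\beta_m')$ in the sense of the definition stated just before Lemma \ref{lem:fractional-trace}. The equality of total sums is immediate, since the perturbation is symmetric: adding $r$ at coordinate $j_1$ and subtracting $r$ at coordinate $j_2$ leaves $\sum_k \beta_k$ unchanged. Hence only the partial-sum inequalities on the non-increasing rearrangements require real work.

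For any real tuple $x = (x_1,\ldots,x_m)$ with non-increasing rearrangement $x^*$, I would use the elementary identity
\[
\sum_{k=1}^l x_k^* = \max\left\{\sum_{k\in S} x_k \,\Big|\, S \subseteq \{1,\ldots,m\},\ |S|=l\right\}.
\]
Fix $l \in \{1,\ldots,m\}$ and choose $S^* \subseteq \{1,\ldots,m\}$ with $|S^*|=l$ attaining the maximum for $(\beta_k)$. Split into cases on membership of $j_1,j_2$ in $S^*$. If $j_1 \in S^*$ or $j_2 \notin S^*$, then inspection of each of the three resulting subcases gives $\sum_{k\in S^*}\beta'_k \geq \sum_{k\in S^*}\beta_k$, so
\[
\sum_{k=1}^l (\beta')^*_k \geq \sum_{k\in S^*} \beta'_k \geq \sum_{k\in S^*} \beta_k = \sum_{k=1}^l \beta^*_k.
\]

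The one case requiring a small swap argument is $j_1 \notin S^*$ and $j_2 \in S^*$. Here I would set $S' = (S^* \setminus \{j_2\}) \cup \{j_1\}$, which has $|S'|=l$. Since $\beta_{j_1} \geq \beta_{j_2}$,
\[
\sum_{k\in S'} \beta_k = \sum_{k\in S^*}\beta_k + \beta_{j_1} - \beta_{j_2} \geq \sum_{k\in S^*}\beta_k,
\]
and maximality of $S^*$ forces equality, so $\beta_{j_1} = \beta_{j_2}$. But now $j_1 \in S'$ and $j_2 \notin S'$, so
\[
\sum_{k\in S'} \beta'_k = \sum_{k\in S'} \beta_k + r = \sum_{k=1}^l \beta^*_k + r \geq \sum_{k=1}^l \beta^*_k,
\]
which yields $\sum_{k=1}^l (\beta')^*_k \geq \sum_{k=1}^l \beta^*_k$ as required. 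The argument is purely combinatorial and I do not anticipate a genuine obstacle; it is just the standard fact that pulling apart two entries of a vector produces a vector that majorizes the original, cast in a form usable by the subsequent lemmas.
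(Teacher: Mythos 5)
Your proof is correct. The paper gives no argument beyond the remark that the result ``follows from the definition of majorization via checking simple inequalities,'' so you are supplying the detail the paper leaves implicit, and your route via the variational characterization $\sum_{k=1}^l x_k^* = \max_{|S|=l}\sum_{k\in S}x_k$ together with the swap $S'=(S^*\setminus\{j_2\})\cup\{j_1\}$ is a clean and complete way to carry that out. One small simplification: in the case $j_1\notin S^*$, $j_2\in S^*$ you do not need to invoke maximality to deduce $\beta_{j_1}=\beta_{j_2}$; the chain
\[
\sum_{k=1}^l (\beta')^*_k \;\geq\; \sum_{k\in S'}\beta'_k \;=\; \sum_{k\in S^*}\beta_k + (\beta_{j_1}-\beta_{j_2}) + r \;\geq\; \sum_{k=1}^l \beta^*_k
\]
already closes the case using only $\beta_{j_1}\geq\beta_{j_2}$ and $r>0$.
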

\begin{proof}
The result follows from the definition of majorization via checking simple inequalities hold.
\end{proof}

\begin{lem}
\label{lem:upper-fractional-spectra}
Let $B \in \M_m$ be a positive contraction such that
\[
\Tr(B) \in  \left\{\left.\frac{k}{n} \, \right| \, k \in \bN \cup \{0\}\right\}.
\]
Then there exists a positive contraction $B' \in \M_m$ such that 
\begin{enumerate}
\item $B \prec_\tr B'$, 
\item $\left\|B - B'\right\| \leq \frac{1}{n}$, and
\item $\sp(B') \subseteq \left\{\left.\frac{k}{n} \, \right| \, k \in\{0, 1, \ldots, n\}\right\}.$
\end{enumerate}
\end{lem}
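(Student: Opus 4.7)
The plan is to give a direct construction of $B'$ by rounding each eigenvalue of $B$ to the nearest grid point in $\{0, 1/n, 2/n, \ldots, 1\}$, rounding the largest few eigenvalues up and the rest down in exactly the right proportion to preserve the trace and to ensure majorization.

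First I would diagonalize $B$ so that without loss of generality $B = \diag(\beta_1, \ldots, \beta_m)$ with $1 \geq \beta_1 \geq \beta_2 \geq \cdots \geq \beta_m \geq 0$. Writing $n\beta_i = \lfloor n\beta_i\rfloor + f_i$ with $f_i \in [0,1)$, and using $n\Tr(B) \in \bN$, I would observe that $L := \sum_{i=1}^m f_i$ is a nonnegative integer (and $L \leq m$ since $f_i<1$). Then I would define
\[
\beta'_i = \begin{cases} \lceil n\beta_i\rceil/n & \text{if } i \leq L,\\ \lfloor n\beta_i\rfloor/n & \text{if } i > L,\end{cases}
\]
and $B' = \diag(\beta'_1, \ldots, \beta'_m)$. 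By construction $\sp(B') \subseteq \{k/n : k \in \{0,1,\ldots,n\}\}$, $0 \leq \beta'_i \leq 1$ (using $\beta_i \in [0,1]$), and $|\beta'_i - \beta_i| \leq 1/n$, so $\|B-B'\| \leq 1/n$. Monotonicity of $(\beta'_i)$ follows from the monotonicity of $\lceil \cdot \rceil$ and $\lfloor \cdot \rfloor$, the only non-automatic comparison being $\lceil n\beta_L \rceil \geq \lfloor n\beta_{L+1}\rfloor$, which holds because $\beta_L \geq \beta_{L+1}$.

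The main verification is $B \prec_\tr B'$. Trace preservation is the computation
\[
\Tr(B') - \Tr(B) = \frac{1}{n}\Big(L - \sum_{i=1}^m f_i\Big) = 0.
\]
For the partial-sum majorization inequalities, for $l \leq L$ every $\beta'_i \geq \beta_i$, giving the inequality immediately; for $l > L$,
\[
\sum_{i=1}^l (\beta'_i - \beta_i) = \frac{1}{n}\Big(L - \sum_{i=1}^l f_i\Big) \geq 0,
\]
since $\sum_{i=1}^l f_i \leq \sum_{i=1}^m f_i = L$.

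I do not anticipate a serious obstacle: the content is just careful bookkeeping with the fractional parts $f_i$, and the hypothesis $\Tr(B) \in \{k/n\}$ is exactly what is needed to make $L$ an integer so that the trace balances. (If desired, the same $B'$ can be reached iteratively by repeated application of Lemma \ref{lem:perturb-up-and-down-for-majorization}, pairing an off-grid eigenvalue that needs to be rounded up with one that needs to be rounded down, but the explicit construction above is cleaner and gives the norm bound $1/n$ immediately.)
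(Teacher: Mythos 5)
Your construction is a genuinely different route from the paper's (the paper iterates the perturbation Lemma~\ref{lem:perturb-up-and-down-for-majorization} until the spectrum lands on the grid, whereas you give an explicit closed-form rounding), but there is a real gap in the trace and partial-sum computation. You use the identity ``ceiling minus the number equals one minus the fractional part,'' i.e.\ $\lceil n\beta_i\rceil - n\beta_i = 1-f_i$, to get $\Tr(B')-\Tr(B)=\frac{1}{n}(L-\sum f_i)$. That identity fails precisely when $f_i=0$: then $\lceil n\beta_i\rceil = n\beta_i$, the ``round up'' does nothing, and the corresponding term contributes $0$ rather than $\frac{1}{n}$. Nothing in the hypotheses prevents some of the first $L$ eigenvalues from already being multiples of $\frac{1}{n}$. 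For a concrete failure take $n=2$, $m=3$, $B=\diag(\tfrac12,\tfrac14,\tfrac14)$: here $\Tr(B)=1=\tfrac{2}{2}$, $f=(0,\tfrac12,\tfrac12)$, $L=1$, and your rule gives $B'=\diag(\tfrac12,0,0)$ with $\Tr(B')=\tfrac12\neq 1$, so $B\not\prec_\tr B'$.

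The fix is to round up not the first $L$ indices but the $L$ largest eigenvalues among those with $f_i>0$ (equivalently, the $L$ smallest indices with $f_i>0$); there are at least $L$ such indices because each $f_i<1$ forces $\#\{i:f_i>0\}>\sum f_i=L$ whenever $L>0$. With this selection rule, for every selected $i$ one has $\lceil n\beta_i\rceil=\lfloor n\beta_i\rfloor+1$, so the trace balance $\Tr(B')=\Tr(B)$ becomes correct; monotonicity still holds because a non-selected index $i$ preceding a selected index $j$ must have $f_i=0$, whence $\lfloor n\beta_i\rfloor=n\beta_i\geq\lceil n\beta_j\rceil$; and the partial-sum inequalities follow from $\#\{\text{selected }i\le l\}\ge \sum_{i\le l}f_i$ for all $l$, which is easy to check directly. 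With that repair your direct construction is a clean alternative to the paper's iterative argument; both give the same $\frac{1}{n}$ norm bound, but yours produces $B'$ in one step rather than by a terminating recursion.
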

\begin{proof}
Let $\beta_1 \geq \beta_2 \geq \cdots \geq \beta_m$ be the eigenvalues of $B$. Without loss of generality, we may assume that
\[
B = \diag(\beta_1, \ldots, \beta_m).
\]
We will recursively increase the number of $\beta_k$ that are in the set $\left\{\left.\frac{k}{n} \, \right| \, k \in \bN \cup \{0\}\right\}$ while ensuring conditions (1) and (2).

At each stage of the recursive process (assuming we are not done), let
\[
k_1 = \min\left\{k \, \left| \, \beta_k \neq \frac{r}{n} \text{ for some }r \in \{0, 1, \ldots, n\} \right.\right\}
\]
and
\[
k_2 = \max\left\{k \, \left| \, \beta_k \neq \frac{r}{n} \text{ for some }r \in \{0, 1, \ldots, n\} \right.\right\}.
\]
Note it must be the case that $k_1 < k_2$ as if $k_1 = k_2$, $\beta_{k_1}$ is the only eigenvalue that is not an integer multiple of $\frac{1}{n}$ which contradicts the fact that $\Tr(B) \in  \left\{\left.\frac{k}{n} \, \right| \, k \in \bN \cup \{0\}\right\}$.

Let
\[
r_1 = \min\left\{ \frac{r}{n} - \beta_{k_1}   \, \left| \, \frac{r}{n} > \beta_{k_1}, r \in \{0, 1, \ldots, n\}  \right. \right\}
\]
and
\[
r_2 = \min\left\{ \beta_{k_2} - \frac{r}{n}    \, \left| \, \frac{r}{n} < \beta_{k_2}, r \in \{0, 1, \ldots, n\}  \right. \right\}.
\]
Furthermore, let $r = \min\{r_1, r_2\}$ and let $B_0 = \diag(\beta'_1, \ldots, \beta'_m)$ where $\beta'_j = \beta_j$ if $j \neq k_1, k_2$, $\beta'_{k_1} = \beta_{k_1} + r$, and $\beta'_{k_2} = \beta_{k_2} - r$.  Therefore $B_0$ is a positive contraction and, by Lemma \ref{lem:perturb-up-and-down-for-majorization}, 
\[
B \prec_\tr B_0.
\]
Notice by construction that either $\beta'_{k_1}$ or $\beta'_{k_2}$ is in $\left\{\left.\frac{k}{n} \, \right| \, k \in\{0, 1, \ldots, n\}\right\}$.  Consequently, by recursively applying this process (where the $\beta'_k$ may now need to be reordered), we must eventually obtain a positive contraction $B'$ such that $B \prec_\tr B'$ and 
\[
\sp(B') \subseteq \left\{\left.\frac{k}{n} \, \right| \, k \in\{0, 1, \ldots, n\}\right\}.
\]
Furthermore, by analyzing the above process, we see that $B'$ is obtained from $B$ by changing the diagonal entries in such a way that if $r \in \{0,\ldots, n-1\}$ is such that $\frac{r}{n} \leq \beta_k \leq \frac{r+1}{n}$, then the $k^\th$ diagonal entry entry of $B'$ is either $\frac{r}{n}$ or $\frac{r+1}{n}$.  Hence $\left\|B - B'\right\| \leq \frac{1}{n}$ as desired.
\end{proof}

To obtain a bound for the number of unitaries needed in a convex combination, we note the following elementary result.

\begin{lem}
\label{lem:a-convex-combination-of-2}
Let $r_1, r_2 \in [0,1]$ be such that $r_1 \geq r_2$.  Then for each $s_1 \in [r_2, r_1]$ there exist $U_1, U_2 \in \M_2$ such that
\[
\diag(s_1, r_1 + r_2 - s_1) = \frac{1}{2}U_1^* \diag(r_1, r_2) U_1 +  \frac{1}{2} U_2^* \diag(r_1, r_2) U_2.
\]
\end{lem}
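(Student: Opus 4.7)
The plan is to realize the averaging by conjugating $\diag(r_1, r_2)$ by a pair of real $2\times 2$ rotation matrices with opposite angles, exploiting the fact that their off-diagonal contributions cancel. First I would dispose of the degenerate case $r_1 = r_2$: then the constraint $s_1 \in [r_2, r_1]$ forces $s_1 = r_1 = r_2$, so $\diag(s_1, r_1+r_2-s_1) = \diag(r_1, r_2)$ and one may take $U_1 = U_2 = I$.

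For the main case $r_1 > r_2$, I would introduce the rotation
\[
U_\theta = \begin{pmatrix} \cos\theta & -\sin\theta \\ \sin\theta & \cos\theta \end{pmatrix}
\]
and compute directly that
\[
U_\theta^* \diag(r_1, r_2) U_\theta = \begin{pmatrix} r_1 \cos^2\theta + r_2 \sin^2\theta & (r_2 - r_1)\sin\theta\cos\theta \\ (r_2-r_1)\sin\theta\cos\theta & r_1 \sin^2\theta + r_2\cos^2\theta \end{pmatrix}.
\]
Replacing $\theta$ by $-\theta$ preserves the diagonal entries but flips the sign of the off-diagonal ones, so the average $\frac{1}{2}U_\theta^* \diag(r_1, r_2) U_\theta + \frac{1}{2}U_{-\theta}^* \diag(r_1, r_2) U_{-\theta}$ is the diagonal matrix $\diag(r_1 \cos^2\theta + r_2 \sin^2\theta,\, r_1 \sin^2\theta + r_2 \cos^2\theta)$. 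The plan is then to choose $\theta$ so that $\cos^2\theta = (s_1 - r_2)/(r_1 - r_2)$, which lies in $[0,1]$ precisely because $s_1 \in [r_2, r_1]$; a substitution shows this yields $\diag(s_1, r_1 + r_2 - s_1)$. Taking $U_1 = U_\theta$ and $U_2 = U_{-\theta}$ completes the construction.

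I do not foresee any real obstacle: the whole argument reduces to a one-parameter rotation calculation, and the hypothesis $s_1 \in [r_2, r_1]$ is exactly what is needed to ensure $\cos^2\theta \in [0,1]$ can be achieved.
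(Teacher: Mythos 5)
Your proposal is correct and essentially identical to the paper's proof: both conjugate $\diag(r_1,r_2)$ by the two opposite rotations $U_{\pm\theta}$ with $\cos^2\theta = (s_1-r_2)/(r_1-r_2)$ (the paper phrases this as $\cos^2\theta = t$ where $s_1 = tr_1 + (1-t)r_2$, which is the same quantity) and observes that the off-diagonal terms cancel in the average. Your separate treatment of the degenerate case $r_1 = r_2$ is a harmless extra precaution; the paper's choice of $t$ is vacuously fine there as well.
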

\begin{proof}
Since $s_1 \in [r_2, r_1]$, there exists a $t \in [0,1]$ such that $s_1 = t r_1 + (1-t) r_2$.  Let $\theta \in [0,2\pi]$ be such that $\cos^2(\theta) = t$.  We claim that the unitaries
\[
U_1 = \begin{bmatrix}
\cos(\theta) & \sin(\theta) \\ - \sin(\theta) & \cos(\theta)
\end{bmatrix}
\qqand
U_2 = \begin{bmatrix}
\cos(\theta) & -\sin(\theta) \\  \sin(\theta) & \cos(\theta)
\end{bmatrix}
\]
work.  Indeed, as $\sin^2(\theta) = 1-t$,
\[
U_1^* \diag(r_1, r_2) U_1 = \begin{bmatrix}
tr_1 + (1-t) r_2 & \cos(\theta) \sin(\theta) (r_1 - r_2) \\ \cos(\theta) \sin(\theta) (r_1 - r_2) & (1-t) r_1 + t r_2
\end{bmatrix}
\]
and
\[
U_2^* \diag(r_1, r_2) U_2 = \begin{bmatrix}
tr_1 + (1-t) r_2 & \cos(\theta) \sin(\theta) (r_2 - r_1) \\ \cos(\theta) \sin(\theta) (r_2 - r_1) & (1-t) r_1 + t r_2
\end{bmatrix}.
\]
Hence
\[
\frac{1}{2}U_1^* \diag(r_1, r_2) U_1 +  \frac{1}{2} U_2^* \diag(r_1, r_2) U_2 = \diag(s_1, (1-t) r_1 + t r_2).
\]
However
\[
((1-t) r_1 + t r_2) + s_1 = r_1 + r_2
\]
so the claim follows.
\end{proof}

To progressively apply Lemma \ref{lem:a-convex-combination-of-2}, we note the following triviality.

\begin{lem}
\label{lem:progressive-convex-combinations}
Let $A, B, C \in \M_m$ be such that $A$ is a convex combination of $k_1$ unitary conjugates of $B$ and $B$ is a convex combination of $k_2$ unitary conjugates of $C$.  Then $A$ is a convex combination of $k_1k_2$ unitary conjugates of $C$.
\end{lem}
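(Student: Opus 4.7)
The plan is to prove this by direct substitution, which is the reason the authors describe the result as a triviality. By hypothesis, I may write
\[
A = \sum_{i=1}^{k_1} s_i\, U_i^* B U_i \qquad \text{and} \qquad B = \sum_{j=1}^{k_2} t_j\, V_j^* C V_j,
\]
where $\{s_i\}_{i=1}^{k_1}$ and $\{t_j\}_{j=1}^{k_2}$ are collections of non-negative real numbers summing to $1$, and $U_1,\ldots,U_{k_1}, V_1,\ldots,V_{k_2} \in \M_m$ are unitaries.

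First I would substitute the second equation into the first and use linearity of conjugation by $U_i$ together with the fact that $U_i^* V_j^* = (V_j U_i)^*$:
\[
A = \sum_{i=1}^{k_1} s_i\, U_i^* \left( \sum_{j=1}^{k_2} t_j\, V_j^* C V_j \right) U_i = \sum_{i=1}^{k_1} \sum_{j=1}^{k_2} s_i t_j\, (V_j U_i)^* C\, (V_j U_i).
\]
Each $V_j U_i$ is a product of unitaries and is hence unitary, each scalar $s_i t_j$ is non-negative, and
\[
\sum_{i=1}^{k_1} \sum_{j=1}^{k_2} s_i t_j = \left( \sum_{i=1}^{k_1} s_i \right)\left( \sum_{j=1}^{k_2} t_j \right) = 1.
\]
Therefore the displayed expression exhibits $A$ as a convex combination of $k_1 k_2$ unitary conjugates of $C$, completing the proof. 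There is no meaningful obstacle to address; the only care needed is the observation that conjugation by a fixed unitary commutes with taking real-linear combinations, which is immediate.
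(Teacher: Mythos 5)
Your proof is correct, and it is the direct substitution argument one would expect; the paper itself states the lemma without proof as a triviality, so there is nothing to compare against beyond noting that your computation is exactly the right one.
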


\begin{proof}[Proof of Theorem \ref{thm:bounding-number-in-approx-convex-combination}]
By applying Lemma \ref{lem:fractional-trace}, then Lemma \ref{lem:fractional-lower-spectra} to the results, and then by applying Lemma \ref{lem:upper-fractional-spectra} to those results, there exist positive contractions
$A_0, B_0 \in \M_m$ such that
\begin{enumerate}
\item $A_0 \prec_\tr B_0$,
\item $\left\|A_0 - A\right\| < \frac{2}{n-1} + \frac{2}{n}$, $\left\|B_0 - B\right\| < \frac{2}{n-1}+ \frac{3}{n}$, and
\item $\sp(A_0), \sp(B_0) \subseteq \left\{\left.\frac{k}{n} \, \right| \, k \in\{0, 1, \ldots, n\}\right\}.$
\end{enumerate}
If there exist $U_1, \ldots, U_{f(n)} \in \M_m$ and $t_1, \ldots t_{f(n)} \in [0,1]$ such that $\sum^{f(n)}_{k=1} t_k = 1$ and
\[
A_0 = \sum^{f(n)}_{k=1} t_k U^*_k B_0 U_k,
\]
then the result will follow as $\left\|A_0 - A\right\| < \frac{2}{n-1} +
\frac{2}{n}$ and
\[
\left\|\sum^{f(n)}_{k=1} t_k U^*_k B_0 U_k - \sum^{f(n)}_{k=1} t_k U^*_k B U_k\right\| \leq \sum^{f(n)}_{k=1} t_k \left\|B_0 - B\right\| < \frac{2}{n-1} +
\frac{3}{n}.
\]

We will now demonstrate a progressive method using Lemmata \ref{lem:a-convex-combination-of-2} and \ref{lem:progressive-convex-combinations} to obtain that 
\[
A_0 = \sum^{f(n)}_{k=1} t_k U^*_k B_0 U_k.
\]
Note for that which follows, we may assume that $A_0$ and $B_0$ are both diagonal matrices with the eigenvalues along the diagonal in non-increasing order.

For a positive contraction $C \in \M_m$, let $\lambda(C; x)$ be the number of eigenvalues of $C$ (counting multiplicity) equal to $x$.  By assumption, we know that
\[
\sum^n_{k=0} \lambda\left(A_0; \frac{k}{n}\right) =\sum^n_{k=0} \lambda\left(B_0; \frac{k}{n}\right) = m.
\]
Since $A_0 \prec_\tr B_0$, it must be the case that $\lambda(A_0; 1) \leq \lambda(B_0; 1)$.  By using direct sums of unitaries with identity matrices and by decreasing the size of $m$, we may assume that $\lambda(A_0; 1) = 0$ (i.e. both $A_0$ and $B_0$ are diagonal matrices with $\lambda(A_0; 1)$ and $\lambda(B_0; 1)$ ones respectively along the diagonal; we may ignore the first $\lambda(A_0; 1)$ diagonal entries).

If $\lambda(B_0; 1) = 0$, we may move on to dealing with $\frac{n-1}{n}$ instead of $1$.  Otherwise, the first $\lambda(B_0; 1) > 0$ diagonal entries of $B_0$ need to be `corrected'.  By `corrected' we mean we want to replace $B_0$ with $B_1$ where 
\begin{enumerate}
\item $A_0 \prec_\tr B_1$,
\item $\sp(B_1) \subseteq \left\{\left.\frac{k}{n} \, \right| \, k \in\{0, 1, \ldots, n\}\right\}$, and
\item $\lambda(B_1; 1) = 0 = \lambda(A_0; 1)$.
\end{enumerate}
To do this, we desire to change any occurrences of $1$ long the diagonal of $B_0$ into $\frac{n-1}{n}$.

Notice it must be the case that $\lambda\left(B_0; \frac{j}{n}\right) \neq 0$ for some $j \in \{0, 1, \ldots, n-2\}$.  Indeed otherwise we would have $\sp(B_{0}) = \left\{1, \frac{n-1}{n}\right\}$.  However, since $A_0 \prec_\tr B_{0}$ implies $\Tr(A_0) = \Tr(B_0)$, the conditions $\lambda(A_0; 1) = 0$ and $\sp(A_0) \subseteq \left\{\left.\frac{k}{n} \, \right| \, k \in\{0, 1, \ldots, n\}\right\}$ would contradict $\Tr(A_0) = \Tr(B_0)$.  Hence
\[
k = \max\left\{j \in \{0, 1, \ldots, n-2\} \, \left| \, \lambda\left(B_0; \frac{j}{n}\right) \neq 0 \right.\right\}
\]
is well-defined.

Let
\[
p = \min\left\{\lambda(B_0; 1), (n-1-k)\lambda\left(B_0; \frac{k}{n}\right)\right\}.
\]
We will now describe a process for changing the first $p$ 1s on the diagonal of $B_0$.

For convenience, let $x = \lambda(B_0; 1) + \lambda\left(B_0; \frac{n-1}{n}\right)$ and let $y = \lambda(B_0; 1)$. Note $p \leq x$.  Assume that $\alpha_1 \geq \alpha_2 \geq \cdots \geq \alpha_m$ and $\beta_1 \geq \beta_2 \geq \cdots \geq \beta_m$ are the eigenvalues arranged in non-increasing order of $A_0$ and $B_0$ respectively.  Then
\[
(\alpha_1, \ldots, \alpha_m) \prec (\beta_1, \ldots, \beta_m).
\]
We claim that
\[
(\alpha_1, \ldots, \alpha_m) \prec \left( \frac{n-1}{n}, \beta_2, \ldots, \beta_{x}, \frac{k+1}{n}, \beta_{x+2},\ldots, \beta_m\right).
\]
To see this, we notice that $\alpha_q \leq \frac{n-1}{n}$ for all $q$ so as $\beta_2, \ldots, \beta_x \geq \frac{n-1}{n}$, checking the sums for majorization works upto and including index $x$.  Since summing the first $x+1$ elements of the right-hand tuple produces $\sum^{x+1}_{j=1} \beta_j$, the majorization is then assured.   If $k+1 \neq n-1$, then we can apply the same argument to show that 
\[
(\alpha_1, \ldots, \alpha_m) \prec \left( \frac{n-1}{n}, \frac{n-1}{n}, \beta_3, \ldots, \beta_{x}, \frac{k+2}{n}, \beta_{x+2},\ldots, \beta_m\right).
\]
By continuing this process, we eventually change the first $y$ entries of the right-hand tuple to be $\frac{n-1}{n}$ (only occurs in the case that $p =\lambda(B_0; 1)$), or the $x+1^{\mathrm{st}}$ entry of the right-hand tuple becomes $\frac{n-1}{n}$ after $n-1-k$ progressions.  Using the same arguments, we may begin to change the $x+2^{\mathrm{nd}}$ entry in the same manner.  Repeating ad nauseum, we eventually obtain
\[
(\alpha_1, \ldots, \alpha_m) \prec \left( \frac{n-1}{n}, \ldots, \frac{n-1}{n}, \beta_{p+1}, \ldots, \beta_{x}, \frac{n-1}{n}, \ldots, \frac{n-1}{n}, \beta'_z,\ldots, \beta_m\right).
\] 
where $\beta'_z = \beta_z$ if $\beta_z \neq \frac{k}{n}$ and otherwise $\beta'_z$ is some element of $\left\{\frac{k}{n}, \ldots, \frac{n-1}{n}\right\}$.  Hence, if
\[
B_{0,1} = \diag \left( \frac{n-1}{n}, \ldots, \frac{n-1}{n}, \beta_{p+1}, \ldots, \beta_{x}, \frac{n-1}{n}, \ldots, \frac{n-1}{n}, \beta'_z,\ldots, \beta_m\right)
\]
then we have $A_0 \prec_\tr B_{0,1} \prec_\tr B_0$ by construction (i.e. to see $B_{0,1} \prec_\tr B_0$, apply Lemma \ref{lem:a-convex-combination-of-2} at each stage of the above process) and $\sp(B_{0,1}) \subseteq \left\{\left.\frac{k}{n} \, \right| \, k \in\{0, 1, \ldots, n\}\right\}$.

To count the number unitary conjugates of $B_0$ required in a convex combination to produce $B_{0,1}$, notice in the above process that the $x+1^{\mathrm{st}}$ entry of the right-hand tuple only interacts with the first $n-1-k$ entries of the right-hand tuple, the $x+2^{\mathrm{nd}}$ entry of the right-hand tuple only interacts with the next $n-1-k$ entries of the right-hand tuple, and so on.  Hence, using direct sums and Lemmata \ref{lem:a-convex-combination-of-2} and \ref{lem:progressive-convex-combinations}, we obtain that $B_{0,1}$ can be constructed as a convex combination of at most $2^{n-1-k}$ unitary conjugates of $B_0$.  

If $p = \lambda(B_0; 1)$, then we have successfully turned all of the
$1$ diagonal entries in $B_0$ into $\frac{n-1}{n}$s via $B_{0,1}$ (so let $B_1 = B_{0,1}$).  Otherwise $\lambda(B_{0,1}; 1) > 0$ and
\[
k > \max\left\{j \in \{0, 1, \ldots, n-2\} \, \left| \, \lambda\left(B_{0,1}; \frac{j}{n}\right) \neq 0 \right.\right\} \neq -\infty.
\]
Consequently, we can apply this process again, each time decreasing the value of $k$.  As there are only a finite number of values of $k$, this process must stop.  When the process terminates, the resulting $B_{0,l}$ must have the property that $\lambda(B_{0,l}; 1) = 0$ for otherwise if $\lambda(B_{0,l}; 1) > 0$ the process yields $\sp(B_{0,l}) = \left\{1, \frac{n-1}{n}\right\}$ which contradicts the fact that $A_0 \prec_\tr B_{0,l}$ via comparing traces as $\lambda(A_0; 1) = 0$ and $\sp(A_0) \subseteq \left\{\left.\frac{k}{n} \, \right| \, k \in\{0, 1, \ldots, n\}\right\}.$

Analyzing the above process, we see that we need at most
\[
\prod^{n-2}_{k=0} 2^{n-1-k} = 2^{\sum^{n-2}_{k=0} n-1-k} = 2^{\sum^{n-1}_{j=1} j} = 2^{\frac{(n-1)n}{2}}
\]
unitary conjugates in a convex combination to change $B_0$ into $B_1$ (i.e. the worst-case scenario is we never have $p = \lambda(B_0; 1)$ and we have to exhaust our way all the way down).

By repeating the above process, we may obtain $B_2$ via a convex combination of at most
\[
\prod^{n-3}_{k=0} 2^{n-2-k} = 2^{\sum^{n-3}_{k=0} n-2-k} = 2^{\sum^{n-2}_{j=1} j} = 2^{\frac{(n-2)(n-1)}{2}}
\]
unitary conjugates of $B_1$ such that
\begin{enumerate}
\item $A_0 \prec_\tr B_2$,
\item $B_2 \subseteq \left\{\left.\frac{k}{n} \, \right| \, k \in\{0, 1, \ldots, n\}\right\}$, and
\item $\lambda(B_2; 1) = \lambda(A_0; 1)$, and $\lambda\left(B_2; \frac{n-1}{n}\right) = \lambda\left(A_0; \frac{n-1}{n}\right)$.
\end{enumerate}
This process eventually stops at $B_n$ as we will have
\[
\lambda\left(B_n; \frac{n-k}{n}\right) = \lambda\left(A_0; \frac{n-k}{n}\right)
\]
for all $k \leq n-1$ so $A_0 \prec_\tr B_n$ will then imply $\lambda(B_n; 0) = \lambda(A_0; 0)$.  Consequently, the above process shows that $A_0$ is a convex combination of at most
\[
2^{\sum^{n}_{k=1}  \frac{(n-k)(n+1-k)}{2} } = \cdots = 2^{\frac{1}{6} (n^3 - n)}
\]
unitary conjugates of $B_0$.
\end{proof}

\begin{cor}
There exists constant $C_1, C_2 > 0$ for any unital, separable, simple, non-elementary C$^*$-algebra  $\A$ with non-trivial tracial simple, real rank zero, strict comparison of projections with respect to tracial states, and cancellation of projections, for any $n \in \bN$, and for any $a, b \in \A_{\sa}$ such that $a \prec_\tau b$ for all $\tau \in T(\A)$ there exists  $\ell = C_1 2^{\frac{1}{3}(n^3 - n)}$ unitaries $u_1, \ldots, u_{\ell} \in \A$ and $t_1, \ldots, t_\ell \in [0,1]$ such that $\sum^\ell_{k=1} t_k = 1$ and
\[
\left\|a - \sum^\ell_{k=1} t_k u_k^* b u_k\right\| < \frac{C_2}{n-1}.
\]
\end{cor}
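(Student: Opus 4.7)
The plan is to retrace the proof of Theorem \ref{thm:main} while carefully counting the unitary conjugates used at each matricial stage, and to replace every application of Arveson/Birkhoff by its quantitative version in Theorem \ref{thm:bounding-number-in-approx-convex-combination}. First, by \cite{H2001}*{Theorem 4.5} together with \cite{LinRange}*{Lemma 5.1} and \cite{HuLin}*{Theorem 5.6}, one obtains a unital TAF C$^*$-subalgebra $\C \subseteq \A$ and self-adjoint elements $\tilde a, \tilde b \in \C$ with $\|a - \tilde a\|, \|b - \tilde b\| < 1/n$, each obtained via a single unitary conjugation; part (\ref{EFNorm}) of Theorem \ref{thm:eigenvalue-function-properties} then transfers the majorization $a \prec_\tau b$ to $\tilde a \prec_\tau \tilde b$ up to an $O(1/n)$ slack in both defining inequalities of $\prec_\tau$.

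Within $\C$, I would then invoke the machinery of Lemma \ref{lem:TAFFiniteSpectrumCase} with parameters chosen so that all tolerances are $O(1/n)$. This yields decompositions $\tilde a \approx \tilde a_0 + \tilde a_1$ and $\tilde b \approx \tilde b_0 + e + d + \sum_j b'_{1,j}$, where $\tilde a_1$ and $\sum_j b'_{1,j}$ lie inside a direct sum of matrix algebras $\bigoplus_j \M_{m_j}$, and where Lemma \ref{lem:strict-comparison-convex-hull-provided-room} places $\tilde a_0$ in the $\U_\D$-convex hull of $\tilde b_0 + e + d$ for a subalgebra $\D \subseteq \A$ that is a finite direct sum of matrix algebras. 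Crucially, the convex combinations produced in both Lemma \ref{lem:strict-comparison-convex-hull-provided-room} (for $\tilde a_0$) and Lemma \ref{lem:NormPerturbation} combined with Arveson/Birkhoff (for $\tilde a_1$) happen inside matrix algebras; by Theorem \ref{thm:bounding-number-in-approx-convex-combination} each matricial convex combination can be obtained approximately (within $O(1/n)$) using at most $f(n) = 2^{(n^3-n)/6}$ unitary conjugates, independently of the matrix dimensions, and across direct summands one simply takes the direct sum of unitaries without inflating the count.

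The key structural point is that the two matricial stages act on orthogonal corners of $\A$: the unitaries produced by Lemma \ref{lem:strict-comparison-convex-hull-provided-room} lie in $\D$ and commute with $\sum_j b'_{1,j}$ (which is orthogonal to $\D$), while the unitaries produced for $\tilde a_1$ lie in $\bigoplus_j \M_{m_j}$ and commute with $\tilde b_0 + e + d$. Consequently, if $\tilde a_0 = \sum_i s_i U_i^*(\tilde b_0 + e + d) U_i$ and $\tilde a_1 \approx \sum_k t_k v_k^*\bigl(\sum_j b'_{1,j}\bigr) v_k$, then setting $W_{i,k} = U_i v_k \in \U(\A)$ and $\tilde b^\sharp = \tilde b_0 + e + d + \sum_j b'_{1,j}$, one computes
\[
\sum_{i,k} s_i t_k\, W_{i,k}^* \tilde b^\sharp W_{i,k} = \sum_i s_i U_i^*(\tilde b_0 + e + d) U_i + \sum_k t_k v_k^*\Bigl(\sum_j b'_{1,j}\Bigr) v_k \approx \tilde a_0 + \tilde a_1 \approx a,
\]
with total error $O(1/n)$ and total count at most $f(n)^2 = 2^{(n^3-n)/3}$ unitary conjugates of $\tilde b^\sharp$, hence (after absorbing the $\tilde b^\sharp \approx \tilde b \approx b$ errors and the initial unitary conjugation into the constants $C_1$ and $C_2$) of $b$.

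The main obstacle will be ensuring that the two matricial convex combinations can genuinely be multiplied together rather than sequentially composed: this requires verifying both the orthogonality of their supports and that Lemma \ref{lem:strict-comparison-convex-hull-provided-room}'s conclusion can be strengthened from $\U_\D$-convex hull to a product-form statement that leaves $\sum_j b'_{1,j}$ invariant. A secondary nuisance is tracking all of the perturbation errors (those arising from reducing to $\C$, from the finite-dimensional approximations in Lemma \ref{lem:TAFFiniteSpectrumCase}, from the spectral roundings inside the proof of Theorem \ref{thm:bounding-number-in-approx-convex-combination}, and from replacing $\tilde b^\sharp$ by $b$) and confirming they sum to something of order $C_2/(n-1)$ for a universal $C_2$.
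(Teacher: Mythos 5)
Your proposal is correct and follows essentially the same plan as the paper, which is itself only a terse sketch: retrace Theorem \ref{thm:main}, observe that convex combinations of unitary conjugates are manufactured exactly twice (once in Lemma \ref{lem:strict-comparison-convex-hull-provided-room}, once via Lemma \ref{lem:NormPerturbation}), both inside matrix algebras, and replace each Arveson/Birkhoff step by the quantitative Theorem \ref{thm:bounding-number-in-approx-convex-combination} to get $f(n)^2 = 2^{(n^3-n)/3}$ total conjugates with an $O(1/n)$ error. Your ``main obstacle'' is in fact a non-issue: the paper's proof of Lemma \ref{lem:TAFFiniteSpectrumCase} already records that $\sum_j b'_{1,j}$ is orthogonal to $1_\D$, so the extended $\D$-unitaries $U_i$ fix it, and conversely the $\C$-unitaries $v_k$ extended by the identity on $1_\A-1_\C$ fix $a_0$; but even without this product-form symmetry, plain sequential composition (conjugate $b^\sharp$ by $U_i\tilde V_k$) gives exactly the same count $f(n)\cdot f(n)$ by Lemma \ref{lem:progressive-convex-combinations}, so nothing needs to be ``strengthened.'' The only small point to be careful about in your displayed identity is that it holds only after summing (using $\sum_i s_i U_i^*(\tilde b_0+e+d)U_i \approx a_0$ and $a_0\perp 1_\C$); moreover once you substitute Theorem \ref{thm:bounding-number-in-approx-convex-combination} for Arveson/Birkhoff inside the two lemmas, that inner sum is no longer exactly $a_0$, introducing one more $O(1/n)$ term that must be folded into $C_2$, which you correctly flag.
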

\begin{proof}
Consider the proof of Theorem \ref{thm:main}.  Up to elementary perturbations, we obtain that $a \in \cconv(\U(b))$ by only constructing convex combinations of unitary conjugates twice; one via  Lemma \ref{lem:NormPerturbation} and once via Lemma \ref{lem:strict-comparison-convex-hull-provided-room}.  As the proofs of both Lemma \ref{lem:NormPerturbation} and Lemma \ref{lem:strict-comparison-convex-hull-provided-room} us only matricial results, one apply Theorem \ref{thm:bounding-number-in-approx-convex-combination} in their proofs to obtain the desired bounds.
\end{proof}

\section*{Acknowledgements}

The first author thanks Leonel Robert for pointing out the idea of uniform bounds and its relationship to regularity.

\end{document}